\newtheorem{thm}{Theorem}[section]
\newtheorem{lem}[thm]{Lemma}
\newtheorem{prop}[thm]{Proposition}
\newtheorem{ex}[thm]{Example}
\theoremstyle{definition}
\newtheorem{de}[thm]{Definition}
\theoremstyle{remark}
\newtheorem{rem}[thm]{Remark}
\numberwithin{equation}{section}
\def \a {\alpha }
\def \b {\beta}
\begin{document}
	\title [Sequence entropy for amenable group actions]
	{Sequence entropy for amenable group actions}
	\author[C. Liu and K. Yan]{Chunlin Liu  and Kesong Yan}
	
	\thanks{C. Liu is partially supported by NNSF of China (12090012, 12090010). K. Yan is supported by
		NNSF of China
		(11861010, 12171175), NSF of Guangxi Province (2018GXNSFFA281008) and Project of First Class Disciplines of
		Statistics and Guangxi Key Laboratory of Quantity Economics.}
	
	\address[C. Liu]{CAS Wu Wen-Tsun Key Laboratory of Mathematics, School of Mathematical Sciences,
		University of Science and Technology of China, Hefei, Anhui, 230026,
		P.R. China} \email{lcl666@mail.ustc.edu.cn}

	\address[K. Yan]{School of Mathematics and Quantitative Economics, Guangxi University of Finance and
		Economics, Nanning, Guangxi, 530003, P. R. China}
	
	\address{Guangxi Key Laboratory of Big Data in Finance and
		Economics, Guangxi University of Finance and Economics, Nanning,
		Guangxi, 530003, P. R. China} \email{ksyan@mail.ustc.edu.cn}

	
	\begin{abstract}
		We study the sequence entropy for amenable group
		actions and investigate systematically spectrum and several
		mixing concepts via sequence entropy both in measure-theoretic
		dynamical systems and topological dynamical systems. Moreover, we
		use sequence entropy pairs to characterize
		weakly mixing and null systems in the topological sense.
	\end{abstract}
	
	\subjclass[2020]{Primary: 37A25; Secondary: 37A35, 37A05, 37B40.}
	\keywords{sequence entropy, amenable groups, discrete spectrum,
		mixing}
	
	\maketitle
	
	
	\section{Introduction}
	By a $G$-topological dynamical system (or $G$-system for short) $(X,G)$
	we mean a compact metric space $X$ with a (semi)-group $G$ acting
	continuously on $X$. If $G=\mathbb{Z}_+$ (resp. $\mathbb{Z}$) is the
	additive semigroup of non-negative integers (resp. the additive
	group of integers), then the classical $\mathbb{Z}_+$ (resp.
	$\mathbb{Z}$)-action can be induced by a continuous map (resp. a
	homeomorphism) $T: X \rightarrow X$, and we usually denote it by
	$(X, T)$.
	For a $G$-system $(X, G)$ if there exists a $G$-invariant Borel probability measure $\mu$ on $X$, it induces a measure-theoretic dynamical system
	$(X, \mathcal{B}_X,\mu, G)$ (or $G$-MPS for short), where $\mathcal{B}_X$ is the Borel $\sigma$-algebra on $X$. It is well known that, for a $G$-system, there are some groups such that the set of invariant
	probability measures may be empty, whereas, amenability of the
	group ensures  the existence of invariant
	probability measures (see \cite{Gre69}).
	
	\medskip
	
	For $\mathbb{Z}_+$-actions, discrete spectral and mixing properties
	for measure-theoretic dynamical systems have been investigated by
	many authors from kinds of viewpoints (see for example \cite{Fur81,
		HLTXY21, HWY19, LTY15}). An important way to characterize them is by
	the sequence entropy concept. The related research can be traced back to
	Kushnirenko \cite{Ku67}  who introduced the notion of sequence
	entropy along a given infinite sequence of $\mathbb{Z}_+$ for the
	measure-theoretical dynamical system and proved that an invertible
	measure-theoretical dynamical system has discrete spectrum if and
	only if the sequence entropy of the system with respect to any infinite
	sequence of $\mathbb{Z}_+$ is zero (some new proofs related to this
	result can be found in \cite{HMY04,HSY05}). Later, Hulse
	\cite{Hul79} studied the sequence entropy of measure-theoretical
	dynamical systems with quasi-discrete spectrum. In addition, Saleski
	\cite{Sa77} and Hulse \cite{Hul82} obtained some characterizations
	of weakly mixing and strongly mixing measure-theoretical dynamical
	systems by using the sequence entropy. Moreover, Hulse \cite{Hul86}
	gave the characterizations of the compact and weakly mixing
	extensions of measure-theoretical dynamical systems via conditional
	sequence entropy, and some results related to mild mixing and mildly
	mixing extension can be found in \cite{Zha92, Zha93} by Zhang. In
	\cite{CMS09}, Coronel, Maass and Shao studied the relation between
	sequence entropy and the Kronecker and rigid algebras. As an
	application, they characterized compact, rigid and mixing extensions
	via conditional sequence entropy. More recently,  directional sequence entropy  was introduced to study the directional  discrete spectrum by Liu and Xu \cite{LX2023} and directional weak mixing by Liu \cite{L}.
	
	\medskip
	
	From the viewpoint of topological dynamical systems, there are several
	ways to characterize topologically mixing properties (see for example
	\cite{Gla04,HY02, HY04, HY12, SY04}). In 1974, Goodman \cite{Goo74}
	introduced the notion of topological sequence entropy in a way
	analogous to that of Kushnirenko, and studied some properties of
	topologically null systems. More details of topological sequence entropy for $\mathbb{Z}$-actions can be found in \cite{Canovas}. In \cite{Li02}, Li first obtained a
	characterization of topological weak mixing by using topological
	sequence entropy. Moreover, Huang, Shao and Ye \cite{HSY05} further
	investigated systematically several topological mixing concepts via
	topological sequence entropy. Besides, Huang, Li, Shao and Ye
	\cite{HLSY03} localized the notion of sequence entropy by defining
	sequence entropy pairs and proved that a topological dynamical
	system is topologically weakly mixing if and only if any pair  not
	in the diagonal is a sequence entropy pair.
	
	\medskip
	
	As the research progressed, many results in ergodic theory and
	topological dynamical systems were extended to larger groups, such
	as amenable groups. The reader may read a recent book \cite{KL16} to
	learn more details about systems under amenable group actions. In
	1987, Ornstein and Weiss \cite{OW87} developed a method called
	quasi-tiling, which can serve as the substitute of the Rohlin lemma,
	and extended many known results of entropy theory from
	$\mathbb{Z}$-actions to countable infinite amenable group actions.
	Recently, discrete spectrum and mixing concepts for group actions
	were studied in different settings by many authors. For example,
	Huang, Ye and Zhang \cite{HYZ11} studied systematically the local
	entropy theory for actions of a countable discrete amenable group,
	and used it to discuss transitivity, mild mixing, strong mixing,
	regionally proximal relation along sequence and weak mixing of all
	orders. In \cite{YZZ21}, Yu, Zhang and Zhang studied discrete spectrum for
	amenable group actions from the viewpoint of measure complexity. In
	\cite{YLZ21}, Yan, Liu and Zeng investigate systematically several
	mixing concepts for group actions via weak disjointness, return time
	sets and topological complexity functions. In \cite{Fe}, Garc\'ia-Ramos defined weaker forms of topological and measure-theoretical equicontinuity
	for topological dynamical systems, and studied their relationships with sequence entropy
	and systems with discrete spectrum. Kerr and Li \cite{KLt,KLm} established local combinatorial
	and linear-geometric characterizations of sequence entropy in both topological and measure-theoretical sense. The reader also can see a survey \cite[Section 8]{LYY} for more details.
	
	\medskip
	
	It is a natural question whether we can characterize discrete spectrum
	and mixing properties for amenable group actions using sequence
	entropy. Based on this motivation, in this paper, we shall introduce
	the notion of sequence entropy for amenable group actions along some given F\o lner sequence, and use it
	to investigate discrete spectral properties and kinds of mixing
	concepts in both measure-theoretical and topological sense. Let $G$ be a countable discrete amenable group. We prove that a $G$-MPS
	has discrete spectrum if and only if the sequence entropy of the
	system with respect to a fixed F\o lner sequence along any infinite
	subset of $G$ is zero. Moreover, we show that if a $G$-system $(X,
	G)$ is topologically strongly mixing then each non-trivial
	finite open cover of $X$ and any infinite subset $H$ of $G$, the
	topological sequence entropy with respect to a fixed F\o lner
	sequence along some infinite subset of $H$ is positive; topological mild mixing implies that for each non-trivial finite open cover of $X$ and any IP-set $H$ of $G$, the topological sequence entropy with respect to a fixed F\o lner sequence along some infinite subset of $H$ is positive. In addition,
	for an Abelian group action, we obtain that it is weakly mixing if
	and only if for each non-trivial finite open cover, the topological
	sequence entropy with respect to a fixed F\o lner sequence along
	some infinite subset is positive.
	
	\medskip
	
	\medskip
	This paper is organized as follows. In Section 2, we recall some
	basic notions and properties that we use in this paper.  In Section 3, we give characterizations of
	discrete spectrum and weakly mixing measures via sequence entropy.
	In Section 4, we investigate some notions of topological mixing via
	topological sequence entropy. In Section 5, we introduce sequence entropy pairs for $G$-systems and use them to describe null systems
	and weakly mixing systems.
	
	\section{Preliminaries}
	Recall that an infinite countable discrete group $G$ is called {\it
		amenable} if there exists a sequence of finite subsets $F_n \subset
	G$ such that for every $g \in G$,
	\begin{equation} \label{eq:2-1}
		\lim_{n \rightarrow +\infty} \frac{|g F_n \triangle F_n|}{|F_n|}=0,
	\end{equation}
	where $|\cdot|$ denotes the cardinality of a set and $\triangle$
	stands for the symmetric difference of sets. A sequence satisfying
	condition \eqref{eq:2-1} is called a {\it F\o lner sequence} (see
	\cite{Fol55}).
	
	\medskip
	
	Throughout this paper, we let $X$ be a compact metric space with a
	metric $d$ and $G$ an infinite countable discrete group. By
	a {\it $G$-system} we mean  a pair $(X, G)$, where $\Gamma: G
	\times X \rightarrow X, (g,x) \mapsto gx$ is a continuous mapping
	satisfying
	\begin{enumerate}
		\item $\Gamma(e_G,x)=x$ for each $x \in X$;
		
		\medskip
		
		\item $\Gamma(g_1, \Gamma(g_2,x))=\Gamma(g_1g_2,x)$ for each $g_1,g_2 \in
		G$ and $x \in X$.
	\end{enumerate}
	If $X$ is a single point set then $(X, G)$ is said to be a {\it
		trivial system}.
	
	\medskip
	If not explicitly stated, in this paper, we always suppose that $G$ is an infinite countable discrete amenable group and
	$\mathbf{F}=\{F_n\}_{n=1}^{\infty}$ is a F\o lner sequence of $G$
	with $e_G \in F_1 \subseteq F_2 \subseteq \cdots$ and
	$\bigcup_{n=1}^{\infty}F_n=G$.
	
	\medskip
	
	For two dynamical systems $(X, G)$ and $(Y, G)$, their {\it product
		system} $(X \times Y, G)$ is defined by the diagonal action:
	$g(x,y)=(gx,gy)$ for all $x \in X, y \in Y$ and $g \in G$. Higher
	order products are defined analogously.
	
	\medskip
	
	Let $(X, G)$ be a $G$-system,  $\mathcal{B}_X$  be the collection of Borel subsets of $X$ and $\mathcal{M}(X)$ be the set of all
	Borel probability measures on $X$. We say that $\mu \in \mathcal{M}(X)$ is
	{\it $G$-invariant} if $\mu=g\mu:=\mu \circ g^{-1}$ for each $g \in
	G$; a $G$-invariant measure $\nu \in \mathcal{M}(X)$ is called
	{\it ergodic} if $\nu(\bigcup_{g \in G} gA)=0$ or $1$ for any $A\in\mathcal{B}_X$. Denote by $\mathcal{M}(X, G)$ and $\mathcal{M}^e(X, G)$ the
	set of all $G$-invariant Borel probability measures and ergodic
	$G$-invariant Borel probability measures on $X$, respectively. The
	amenability of $G$ ensures that $\mathcal{M}(X, G)\neq \emptyset$.
	Under the weak$^*$-topology, $\mathcal{M}(X, G)$ is a convex compact
	metric space, and $\mu \in \mathcal{M}^e(X, G)$ if and only
	if it is an extreme point of $\mathcal{M}(X, G)$ (see \cite[Theorem
	4.2]{Gla03}).
	
	\medskip
	
	Note that each probability measure $\mu \in \mathcal{M}(X, G)$ induces a
	$G$-MPS $(X,
	\mathcal{B}_X, \mu, G)$. Let $H_c \subset L^2(X, \mathcal{B}_X,
	\mu)$ be the closed subspace generated by all finite dimensional
	$G$-invariant subspaces of $L^2(X, \mathcal{B}_X, \mu)$. It is
	well known that for $f \in L^2(X, \mathcal{B}_X, \mu)$, $f \in H_c$
	if and only if $f$ is an almost periodic function, that is, $\{U_gf:
	g \in G\}$ is precompact in $L^2(X, \mathcal{B}_X, \mu)$, where
	$U_gf:=f \circ g$ for any $g \in G$. By \cite[Theorem 7.1]{Zim76},
	there exists a $G$-invariant sub-$\sigma$-algebra
	$\mathcal{K}_{\mu}$ of $\mathcal{B}_X$ such that $H_c=L^2(X,
	\mathcal{K}_{\mu}, \mu)$. We call $\mathcal{K}_{\mu}$ the {\it
		Kronecker algebra} of $(X, \mathcal{B}_X, \mu, G)$. We say that
	$\mu$ has {\it discrete spectrum} if $H_c=L^2(X, \mathcal{B}_{X},
	\mu)$ or equivalently $\mathcal{K}_{\mu}=\mathcal{B}_X$.
	
	\medskip
	
	We recall that a $G$-MPS $(X, \mathcal{B}_X, \mu, G)$ is {\it weakly
		mixing} if the product system $(X \times X, \mathcal{B}_X \times
	\mathcal{B}_X, \mu \times \mu, G)$ is ergodic. For an amenable group
	action, weak mixing can be expressed in terms of asymptotic conditions
	involving averages along a F\o lner sequence (see \cite[Theorem
	4.21]{KL16}).
	
	\begin{thm} \label{thm:2-1}
		Given a $G$-MPS $(X, \mathcal{B}_X, \mu, G)$, it is weakly mixing if
		and only if
		$$\lim_{n \rightarrow \infty} \frac{1}{|F_n|}\sum_{g \in F_n} |\mu(A \cap g^{-1}B)-\mu(A)\mu(B)|=0$$
		for all $A, B \in \mathcal{B}_X$.
	\end{thm}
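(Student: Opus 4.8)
The plan is to reduce the statement to the von Neumann mean ergodic theorem along $\mathbf{F}$. Recall that weak mixing of $(X,\mathcal{B}_X,\mu,G)$ means ergodicity of the product $(X\times X,\mathcal{B}_X\times\mathcal{B}_X,\mu\times\mu,G)$, and that the mean ergodic theorem (which holds along an arbitrary F\o lner sequence) asserts that $\frac{1}{|F_n|}\sum_{g\in F_n}U_g$ converges strongly to the orthogonal projection onto the $G$-invariant vectors. Hence ergodicity of a system is equivalent to $\frac{1}{|F_n|}\sum_{g\in F_n}\langle U_g F,H\rangle\to(\int F\,d\mu)\,\overline{(\int H\,d\mu)}$ for all square-integrable $F,H$, and since finite linear combinations of indicator functions of measurable rectangles are dense in $L^2(X\times X)$, it suffices to verify this convergence for $F,H$ of rectangle type. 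Writing $c_{A,B}(g):=\mu(A\cap g^{-1}B)-\mu(A)\mu(B)$, I will show that the $L^1$-Ces\`aro condition in the statement is equivalent to the $L^2$-Ces\`aro condition $\frac{1}{|F_n|}\sum_{g\in F_n}|c_{A,B}(g)|^2\to 0$, and that the latter is precisely what the mean ergodic theorem requires for the product.

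For the forward direction, assume weak mixing. Testing ergodicity of $X\times X$ with $F=1_{B\times B}$ and $H=1_{A\times A}$ gives $\frac{1}{|F_n|}\sum_{g\in F_n}\mu(A\cap g^{-1}B)^2\to(\mu(A)\mu(B))^2$, while ergodicity of $X$ itself (a consequence of weak mixing) gives $\frac{1}{|F_n|}\sum_{g\in F_n}\mu(A\cap g^{-1}B)\to\mu(A)\mu(B)$. Expanding $|c_{A,B}(g)|^2=\mu(A\cap g^{-1}B)^2-2\mu(A)\mu(B)\,\mu(A\cap g^{-1}B)+(\mu(A)\mu(B))^2$ and averaging over $g\in F_n$, the two limits combine to yield $\frac{1}{|F_n|}\sum_{g\in F_n}|c_{A,B}(g)|^2\to 0$. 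The Cauchy--Schwarz inequality then gives $\frac{1}{|F_n|}\sum_{g\in F_n}|c_{A,B}(g)|\le\big(\frac{1}{|F_n|}\sum_{g\in F_n}|c_{A,B}(g)|^2\big)^{1/2}\to 0$, which is the desired condition.

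For the converse, assume the $L^1$-condition. Since $0\le|c_{A,B}(g)|\le 1$ we have $|c_{A,B}(g)|^2\le|c_{A,B}(g)|$, so the $L^2$-condition $\frac{1}{|F_n|}\sum_{g\in F_n}|c_{A,B}(g)|^2\to 0$ follows at once. To verify ergodicity of the product on a rectangle, take $F=1_{B\times D}$ and $H=1_{A\times C}$, so that $\langle U_g F,H\rangle=\mu(A\cap g^{-1}B)\,\mu(C\cap g^{-1}D)=(\mu(A)\mu(B)+c_{A,B}(g))(\mu(C)\mu(D)+c_{C,D}(g))$. Averaging over $g\in F_n$, the first-moment terms $\frac{1}{|F_n|}\sum_{g\in F_n}c_{A,B}(g)$ and $\frac{1}{|F_n|}\sum_{g\in F_n}c_{C,D}(g)$ tend to $0$ by Cauchy--Schwarz from the $L^2$-condition, while the cross term is bounded by $\big(\frac{1}{|F_n|}\sum_{g\in F_n}|c_{A,B}(g)|^2\big)^{1/2}\big(\frac{1}{|F_n|}\sum_{g\in F_n}|c_{C,D}(g)|^2\big)^{1/2}\to 0$, again by Cauchy--Schwarz. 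Thus the average converges to $\mu(A)\mu(B)\mu(C)\mu(D)$, the rectangle form of product ergodicity; by density and the mean ergodic theorem the product is ergodic, i.e. $(X,\mathcal{B}_X,\mu,G)$ is weakly mixing.

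The main obstacle is the passage between the $L^1$- and $L^2$-Ces\`aro conditions, which is exactly where the uniform bound $|c_{A,B}(g)|\le 1$ (for the converse) and the Cauchy--Schwarz inequality (for both directions) enter; the remaining ingredients, namely the mean ergodic theorem along an arbitrary F\o lner sequence and the density of rectangles in $L^2(X\times X)$, are standard. This is in essence the content of \cite[Theorem 4.21]{KL16}.
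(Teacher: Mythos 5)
Your proof is correct. The paper offers no argument of its own for Theorem \ref{thm:2-1} --- it simply cites \cite[Theorem 4.21]{KL16} --- and your reduction to the mean ergodic theorem along a F\o lner sequence, together with the passage between the $L^1$- and $L^2$-Ces\`aro conditions via the bound $|c_{A,B}(g)|\le 1$ and Cauchy--Schwarz and the density of rectangles in $L^2(X\times X)$, is exactly the standard argument behind that cited result.
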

	
	\medskip
	
	Given a subset $S$ of $G$, the {\it upper density
	} and {\it lower density} of $S$ with respect to a F\o lner sequence
	$\mathbf{F}=\{F_n\}_{n=1}^\infty$ are defined by
	\begin{equation*}
		\overline{D}_{\mathbf{F}}(S)=\limsup_{n \rightarrow \infty}\frac{|S
			\cap F_n|}{|F_n|}  \mbox{\ \ and\ \ }
		\underline{D}_{\mathbf{F}}(S)=\liminf_{n \rightarrow \infty}\frac{|S
			\cap F_n|}{|F_n|}.
	\end{equation*}
	We say that $A$ has {\it density} $D_{\mathbf{F}}(S)$ with respect
	to $\mathbf{F}$ if
	$D_{\mathbf{F}}(S)=\overline{D}_{\mathbf{F}}(S)=\underline{D}_{\mathbf{F}}(S)$.
	
	\medskip
	
	\begin{thm} \label{thm:2-2}
		A given $G$-MPS $(X, \mathcal{B}_X, \mu, G)$ is weakly mixing if
		and only if for every pair of elements $A, B \in \mathcal{B}_X$,
		there is a subset $S=\{s_i\}_{i=1}^{\infty}$ of $G$ with
		$D_{\mathbf{F}}(S)=1$ such that
		$$\lim_{i \rightarrow \infty} \mu(A \cap s_i^{-1}B)=\mu(A)\mu(B).$$
	\end{thm}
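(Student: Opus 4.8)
The plan is to transfer everything to the averaged criterion of Theorem~\ref{thm:2-1} by way of a Koopman--von Neumann type lemma adapted to the F\o lner sequence $\mathbf{F}$. Fix a pair $A, B \in \mathcal{B}_X$ and put
$$\phi(g)=|\mu(A \cap g^{-1}B)-\mu(A)\mu(B)|, \qquad g \in G,$$
so that $0 \le \phi(g) \le 1$ for all $g \in G$. By Theorem~\ref{thm:2-1}, the system is weakly mixing precisely when $\frac{1}{|F_n|}\sum_{g\in F_n}\phi(g)\to 0$. Hence the theorem reduces to the following elementary equivalence, which I would isolate as a lemma: for any bounded $\phi\colon G\to[0,\infty)$, one has $\lim_{n\to\infty}\frac{1}{|F_n|}\sum_{g\in F_n}\phi(g)=0$ if and only if there is a set $S\subseteq G$ with $D_{\mathbf{F}}(S)=1$ such that $\{g\in S:\phi(g)\ge\varepsilon\}$ is finite for every $\varepsilon>0$.

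For the substantial (``only if'') half I would mimic the classical argument. For $k\ge 1$ set $J_k=\{g\in G:\phi(g)\ge 1/k\}$; these sets increase with $k$, and the inequality $\frac{1}{|F_n|}\sum_{g\in F_n}\phi(g)\ge\frac{1}{k}\cdot\frac{|J_k\cap F_n|}{|F_n|}$ forces $\overline{D}_{\mathbf{F}}(J_k)=0$. Using that $\mathbf{F}$ is increasing with $\bigcup_n F_n=G$, I choose $n_1<n_2<\cdots$ with $\frac{|J_k\cap F_n|}{|F_n|}<\frac{1}{k}$ for all $n\ge n_k$, and define
$$E=\bigcup_{k\ge 1}\big(J_k\cap(F_{n_{k+1}}\setminus F_{n_k})\big).$$
The monotonicity $J_1\subseteq J_2\subseteq\cdots$ is what makes the estimate work: for $n_m\le n<n_{m+1}$ only the indices $k\le m$ contribute and $J_k\subseteq J_m$, so $E\cap F_n\subseteq J_m\cap F_n$, whence $\frac{|E\cap F_n|}{|F_n|}<\frac{1}{m}$; thus $\overline{D}_{\mathbf{F}}(E)=0$ and $S:=G\setminus E$ satisfies $D_{\mathbf{F}}(S)=1$. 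Moreover, if $g\in S$ and $g\notin F_{n_k}$, then $g$ lies in a block $F_{n_{k'+1}}\setminus F_{n_{k'}}$ with $k'\ge k$ and $g\notin J_{k'}$, giving $\phi(g)<1/k'\le 1/k$; thus $\{g\in S:\phi(g)\ge 1/k\}\subseteq F_{n_k}$ is finite, as required. The ``if'' half is immediate: when $\phi\to 0$ along a density-one set $S$, splitting $F_n$ into $S$ and its complement and using $\overline{D}_{\mathbf{F}}(G\setminus S)=1-\underline{D}_{\mathbf{F}}(S)=0$ yields $\limsup_n\frac{1}{|F_n|}\sum_{g\in F_n}\phi(g)\le\varepsilon$ for every $\varepsilon>0$.

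Granting the lemma, the theorem follows quickly. If $(X,\mathcal{B}_X,\mu,G)$ is weakly mixing, then for each pair $A,B$ the averages of $\phi$ vanish, so the lemma yields $S$ with $D_{\mathbf{F}}(S)=1$ along which $\phi\to0$; enumerating $S=\{s_i\}_{i=1}^\infty$ so that the finitely many points of $S\cap F_{n_1}$ are listed first, then those of $S\cap(F_{n_2}\setminus F_{n_1})$, and so on, forces $s_i\to\infty$ as $i\to\infty$ and hence $\mu(A\cap s_i^{-1}B)\to\mu(A)\mu(B)$. Conversely, given for every $A,B$ such a set $S=\{s_i\}$ of density one with $\mu(A\cap s_i^{-1}B)\to\mu(A)\mu(B)$, the function $\phi$ tends to $0$ along the density-one set $S$, so the lemma gives vanishing averages and Theorem~\ref{thm:2-1} delivers weak mixing.

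I expect the only genuinely delicate point to be the diagonal choice of the cut-off indices $n_k$ together with the reconciliation of the two formulations of ``convergence along a large set'': one must enumerate $S$ so that ``$i\to\infty$'' coincides with ``eventually leaving every $F_{n_k}$,'' and verify that this enumeration is consistent with $D_{\mathbf{F}}(S)=1$. The monotonicity of the sets $J_k$ is the technical heart that keeps the density estimate for $E$ under control.
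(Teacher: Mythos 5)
Your proposal is correct and follows essentially the same route as the paper, which disposes of this theorem by citing the idea of the proof of Walters' Theorem 1.20 --- precisely the reduction to the averaged criterion of Theorem~\ref{thm:2-1} followed by the Koopman--von Neumann density-one extraction via the increasing sets $J_k$ that you carry out in detail. Your version simply makes explicit the adaptation to a general F\o lner sequence and the enumeration issue that the paper leaves to the reader.
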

	
	\begin{proof}
		This directly follows the idea of the proof of \cite[Theorem
		1.20]{Wal82}.
	\end{proof}
	
	\medskip
	
	The following decomposition theorem is well-known (see \cite[Theorem
	2.24]{KL16}), which can be viewed as an amenable version of the
	Koopman-Von Neumann spectrum mixing theorem for $\mathbb{Z}$-actions
	(see \cite{Ber96, KN32}).
	
	\begin{thm} \label{thm:2-3}
		Let $(X, \mathcal{B}_X, \mu, G)$ be a
		$G$-MPS. Then the Hilbert space $H=L^2(X, \mathcal{B}_X, \mu)$ can
		be decomposed as
		$$H=L^2(X, \mathcal{K}_{\mu}, \mu) \oplus WM(X, G),$$
		where
		$$WM(X,G)=\left\{f \in H: \lim_{n \rightarrow \infty}\frac{1}{|F_n|}\sum_{g \in F_n}\left|\langle U_g f, h\rangle\right|=0
		\mbox{\ for all\ } h \in H\right\}.$$
	\end{thm}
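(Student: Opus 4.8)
The plan is to prove the stated splitting as the orthogonal decomposition $H = H_c \oplus H_c^{\perp}$, where $H_c = L^2(X,\mathcal{K}_\mu,\mu)$ is the space of almost periodic vectors recalled in the preliminaries, by showing that $WM(X,G)$ is exactly $H_c^{\perp}$. First I would record that $WM(X,G)$ is a closed $G$-invariant linear subspace of $H$: linearity in $f$ follows from the triangle inequality applied to the defining averages, closedness from the uniform bound $|\langle U_g f, h\rangle| \le \|f\|\,\|h\|$, and $G$-invariance from the fact that translating $f$ merely reindexes the F\o lner averages, using the defining property \eqref{eq:2-1}.

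The analytic engine is the mean ergodic theorem for amenable actions: for any unitary representation $V$ of $G$ on a Hilbert space $K$ one has $\frac{1}{|F_n|}\sum_{g\in F_n} V_g \to P$ strongly, where $P$ is the orthogonal projection onto the $V$-invariant vectors. I would apply this to the tensor representation $U\otimes\overline{U}$ on $K\otimes\overline{K}$ via the elementary identity
$$\frac{1}{|F_n|}\sum_{g\in F_n}\left|\langle U_g w, h\rangle\right|^2 = \left\langle \frac{1}{|F_n|}\sum_{g\in F_n}(U_g\otimes\overline{U_g})(w\otimes\overline{w}),\, h\otimes\overline{h}\right\rangle,$$
whose right-hand side converges to $\langle P_2(w\otimes\overline{w}), h\otimes\overline{h}\rangle$, with $P_2$ the projection onto $(U\otimes\overline{U})$-invariant vectors. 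Under the identification of $K\otimes\overline{K}$ with the Hilbert--Schmidt operators on $K$, these invariant vectors correspond precisely to the Hilbert--Schmidt operators commuting with every $U_g$.

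For the inclusion $H_c^{\perp}\subseteq WM(X,G)$ I would work inside $K=H_c^{\perp}$, which is $G$-invariant and contains no nonzero finite-dimensional $G$-invariant subspace (such a subspace would lie in $H_c$). If $A$ is a nonzero Hilbert--Schmidt operator commuting with every $U_g$, then $A^{*}A$ is a nonzero positive compact operator commuting with the representation, whose top eigenspace is finite-dimensional and $G$-invariant, a contradiction. Hence $P_2=0$ on $K\otimes\overline{K}$, the $L^2$-averages above vanish, and Cauchy--Schwarz, $\big(\frac{1}{|F_n|}\sum_g |\langle U_g w,h\rangle|\big)^2 \le \frac{1}{|F_n|}\sum_g |\langle U_g w,h\rangle|^2$, forces the $L^1$-averages to vanish for all $w,h\in H_c^{\perp}$. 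A general $h\in H$ reduces to this case: writing $h = h_c + h'$ with $h_c\in H_c$ and $h'\in H_c^{\perp}$, the $G$-invariance of $H_c$ gives $\langle U_g w, h_c\rangle = 0$ for $w\in H_c^{\perp}$, so $\langle U_g w,h\rangle = \langle U_g w,h'\rangle$. I expect this paragraph to be the main obstacle, since it is exactly where the weak-mixing mechanism enters: the rigorous passage from ``no finite-dimensional invariant subspace'' to ``$P_2=0$'' through the compactness of commuting Hilbert--Schmidt operators.

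For the reverse containment it suffices to check $WM(X,G)\cap H_c=\{0\}$, since $WM(X,G)$ is a subspace containing $H_c^{\perp}$: any $w\in WM(X,G)$ splits as $w=w_c+w'$ with $w'\in H_c^{\perp}\subseteq WM(X,G)$, whence $w_c\in WM(X,G)\cap H_c$. Now let $f\in H_c$ be nonzero. As $H_c$ is the closed span of the finite-dimensional $G$-invariant subspaces, there is such a subspace $V$ with $P_V f\neq 0$, where $P_V$ is the orthogonal projection onto $V$; since $V$ and $V^{\perp}$ are $G$-invariant, $P_V$ is a finite-rank operator commuting with every $U_g$, hence a $(U\otimes\overline{U})$-invariant vector with $\langle f\otimes\overline{f}, P_V\rangle = \|P_V f\|^2>0$, so $P_2(f\otimes\overline{f})\neq 0$. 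By the tensor identity, $\frac{1}{|F_n|}\sum_g|\langle U_g f,f\rangle|^2\to \|P_2(f\otimes\overline{f})\|^2=:c>0$, and the bound $|\langle U_g f,f\rangle|\le\|f\|^2$ yields $|\langle U_g f,f\rangle|^2\le\|f\|^2\,|\langle U_g f,f\rangle|$, so the corresponding $L^1$-average is at least $c/\|f\|^2>0$ and $f\notin WM(X,G)$. Thus $WM(X,G)\cap H_c=\{0\}$, giving $WM(X,G)=H_c^{\perp}$ and the decomposition $H = L^2(X,\mathcal{K}_\mu,\mu)\oplus WM(X,G)$.
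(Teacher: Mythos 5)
Your argument is correct, but note that the paper does not actually prove this statement: Theorem \ref{thm:2-3} is quoted as well known and attributed to \cite[Theorem 2.24]{KL16}, so there is no in-paper proof to compare against. What you have written is a complete, self-contained rendition of the standard proof of the Koopman--von Neumann dichotomy for amenable actions, and all the key steps are sound: the identity $|\langle U_g w,h\rangle|^2=\langle (U_g\otimes\overline{U_g})(w\otimes\overline{w}),h\otimes\overline{h}\rangle$, the mean ergodic theorem applied to $U\otimes\overline{U}$, the identification of the invariant vectors of $K\otimes\overline{K}$ with Hilbert--Schmidt operators commuting with the representation, the passage from a nonzero commuting Hilbert--Schmidt operator to a finite-dimensional invariant subspace via the top eigenspace of $A^{*}A$, and the reverse inclusion via $\langle f\otimes\overline{f},P_V\rangle=\|P_Vf\|^2$ together with $|\langle U_gf,f\rangle|^2\le\|f\|^2|\langle U_gf,f\rangle|$. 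One detail worth writing out if you flesh this in: with the paper's convention $U_gf=f\circ g$ the map $g\mapsto U_g$ is an anti-homomorphism, and the paper's F\o lner condition \eqref{eq:2-1} is the left one ($|gF_n\triangle F_n|/|F_n|\to 0$); the mean ergodic theorem for the averages $\frac{1}{|F_n|}\sum_{g\in F_n}U_g=\frac{1}{|F_n|}\sum_{g\in F_n}\pi(g^{-1})$ (with $\pi(g)=U_{g^{-1}}$ the genuine representation) is exactly the version that left F\o lner sequences deliver, so the conventions do match, but this is the kind of left/right bookkeeping that should be checked explicitly rather than absorbed into ``the mean ergodic theorem.''
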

	
	\begin{rem} \label{rem:2-3}
		It follows  directly from Theorems \ref{thm:2-1} and \ref{thm:2-3}
		that $(X, \mathcal{B}_X, \mu, G)$ is weakly mixing if and only if
		$\mathcal{K}_{\mu}$ is trivial, that is, $\mathcal{K}_{\mu}=\{X,\emptyset\}$.
	\end{rem}
	
	\medskip
	
	Let $(X, G)$ be a $G$-system and $\mu \in \mathcal{M}(X,G)$. Given a
	finite measurable partition $\alpha$ of $X$ and a
	sub-$\sigma$-algebra $\mathcal{A}$ of $\mathcal{B}_X$, we define the conditional entropy of $\alpha$ given $\mathcal{A}$ as
	\begin{equation*}
		H_{\mu}(\alpha|\mathcal{A}):=\sum_{A \in \alpha} \int_X \,
		-\mathbb{E}(1_A|\mathcal{A}) \log \mathbb{E}(1_A|\mathcal{A}) \,
		\mathrm{d}\mu,
	\end{equation*}
	where $\mathbb{E}(1_A|\mathcal{A})$ denotes the conditional
	expectation of $1_A$ with respect to $\mathcal{A}$. It is well known
	that $H_{\mu}(\alpha|\mathcal{A})$ increases with respect to
	$\alpha$ and decreases with respect to $\mathcal{A}$. Set
	$\mathcal{T}=\{\emptyset, X\}$ and define
	\begin{equation*}
		H_{\mu}(\alpha):=H_{\mu}(\alpha|\mathcal{T})=\sum_{A \in \alpha}
		-\mu(A) \log \mu(A).
	\end{equation*}
	It is easy to check that $H_{\mu}(\alpha|\beta)=H_{\mu}(\alpha \vee
	\beta)-H_{\mu}(\beta)$ for any finite measurable partitions $\alpha$
	and $\beta$. More generally, for a sub-$\sigma$-algebra $\mathcal{A}
	\subset \mathcal{B}_X$, we have
	\begin{equation*}
		H_{\mu}(\alpha\vee \beta|\mathcal{A})=H_{\mu}(\beta|\mathcal{A})+
		H_{\mu}(\alpha|\beta \vee \mathcal{A}).
	\end{equation*} A finite measurable partition $\a$ is finer than another finite measurable partition  $\b$ if for any $A\in\a$ , there exists $B\in \b$ such that $A\subset B$, denoted by $\a\succeq \b$. It is well known that if $\a\succeq \b$ then $H_{\mu}(\a)\ge H_{\mu}(\b)$.
	\medskip
	
	Given an infinite subset $S$ of $G$ and a F\o lner sequence $\textbf{F}=\{F_n\}_{n=1}^\infty$, for
	$\mu \in \mathcal{M}(X, G)$ and a finite measurable partition
	$\alpha$ of $X$, we define the {\it sequence entropy of $\alpha$
		with respect to $\mu$ and $\mathbf{F}$ along $S$} by
	$$h_{\mu}^{S, \mathbf{F}}(G, \alpha)=\limsup_{n \rightarrow \infty}\frac{1}{|S \cap F_n|} H_{\mu}
	\left(\bigvee_{g \in S \cap F_n} g^{-1}\alpha\right).$$ The {\it
		sequence entropy of $(X, \mathcal{B}_X, \mu, G)$ with respect to
		$\mathbf{F}$ along $S$} is defined by
	$$h_{\mu}^{S, \mathbf{F}}(G)=\sup_{\alpha} h_{\mu}^{S, \mathbf{F}}(G, \alpha),$$
	where the supremun is taken over all finite measurable partitions of
	$X$.
	
	Following ideas in the proof of similar result for entropy, we can extend it to sequence entropy (see for example \cite[Theorem 4.22]{Wal82}).
	\begin{prop}\label{prop:2-5}
		Let $(X,\mathcal{B}_X,\mu,G)$ be a $G$-MPS.
		Suppose that $\{\alpha_n\}_{n=1}^{\infty}$ is a sequence of finite
		measurable partitions of $X$ with $\alpha_n \nearrow \mathcal{B}_X$.
		Then for any infinite sequence $S$ of $G$, we have
		\begin{equation*}
			\lim_{n \rightarrow
				\infty}h^{S,\mathbf{F}}_{\mu}(G,\alpha_n)=h^{S,\mathbf{F}}_{\mu}(G).
		\end{equation*}
	\end{prop}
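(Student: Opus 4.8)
The plan is to reduce the identity to two inequalities and to isolate a single entropy estimate as the crux. Since $\alpha_n \nearrow \mathcal{B}_X$ means the partitions increase, $\alpha_{n+1} \succeq \alpha_n$, so $\bigvee_{g\in S\cap F_n} g^{-1}\alpha_{n+1} \succeq \bigvee_{g\in S\cap F_n} g^{-1}\alpha_n$ and hence $n \mapsto h_\mu^{S,\mathbf{F}}(G,\alpha_n)$ is nondecreasing; in particular $\lim_n h_\mu^{S,\mathbf{F}}(G,\alpha_n)$ exists in $[0,+\infty]$. As each $\alpha_n$ is admissible in the supremum defining $h_\mu^{S,\mathbf{F}}(G)$, the inequality $\lim_n h_\mu^{S,\mathbf{F}}(G,\alpha_n) \le h_\mu^{S,\mathbf{F}}(G)$ is automatic, so everything comes down to proving the reverse inequality.

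For the reverse inequality I would first prove the estimate
$$h_\mu^{S,\mathbf{F}}(G,\beta) \le h_\mu^{S,\mathbf{F}}(G,\alpha)+H_\mu(\beta|\alpha)$$
for arbitrary finite measurable partitions $\alpha,\beta$. Fixing $n$ and writing $E_n = S\cap F_n$, I bound
$$H_\mu\Big(\bigvee_{g\in E_n} g^{-1}\beta\Big) \le H_\mu\Big(\bigvee_{g\in E_n} g^{-1}\alpha\Big)+H_\mu\Big(\bigvee_{g\in E_n} g^{-1}\beta \,\Big|\, \bigvee_{g\in E_n} g^{-1}\alpha\Big),$$
using $H_\mu(\gamma\vee\delta)=H_\mu(\delta)+H_\mu(\gamma|\delta)$. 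By the chain rule together with monotonicity of conditional entropy in the conditioning algebra, the conditional term is at most $\sum_{g\in E_n} H_\mu(g^{-1}\beta \mid g^{-1}\alpha)$, and $G$-invariance of $\mu$ gives $H_\mu(g^{-1}\beta\mid g^{-1}\alpha)=H_\mu(\beta|\alpha)$ for every $g$. Hence this term is bounded by $|E_n| H_\mu(\beta|\alpha)$; dividing by $|E_n|$ and taking $\limsup_{n\to\infty}$ yields the displayed estimate, the constant $H_\mu(\beta|\alpha)$ passing through the $\limsup$ without difficulty.

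I would then apply this estimate with $\alpha=\alpha_n$ to obtain, for every fixed finite partition $\beta$,
$$h_\mu^{S,\mathbf{F}}(G,\beta) \le h_\mu^{S,\mathbf{F}}(G,\alpha_n)+H_\mu(\beta|\alpha_n).$$
Since $\alpha_n \nearrow \mathcal{B}_X$ and each $B\in\beta$ lies in $\mathcal{B}_X$, the martingale convergence theorem gives $\mathbb{E}(1_B|\alpha_n)\to 1_B$ in $L^1(\mu)$, so $H_\mu(\beta|\alpha_n)\to H_\mu(\beta|\mathcal{B}_X)=0$. Letting $n\to\infty$ gives $h_\mu^{S,\mathbf{F}}(G,\beta)\le \lim_n h_\mu^{S,\mathbf{F}}(G,\alpha_n)$, and taking the supremum over all finite measurable partitions $\beta$ gives $h_\mu^{S,\mathbf{F}}(G)\le\lim_n h_\mu^{S,\mathbf{F}}(G,\alpha_n)$, which together with the first inequality finishes the proof.

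The main obstacle is the entropy estimate in the second paragraph: the essential point is that crude subadditivity loses too much, and one must instead use the chain rule plus the fact that conditioning on the larger algebra $\bigvee_{g\in E_n}g^{-1}\alpha$ only decreases entropy, and then invoke $G$-invariance to replace each term $H_\mu(g^{-1}\beta\mid g^{-1}\alpha)$ by the single constant $H_\mu(\beta|\alpha)$. The remaining inputs—monotonicity of sequence entropy under refinement and the continuity $H_\mu(\beta|\alpha_n)\to0$ along the increasing partitions—are standard.
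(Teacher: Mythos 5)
Your proof is correct, and it follows essentially the same route the paper intends: the paper gives no written proof but simply points to the standard argument for entropy (Walters, Theorem 4.22), whose adaptation is exactly your estimate $h_\mu^{S,\mathbf{F}}(G,\beta)\le h_\mu^{S,\mathbf{F}}(G,\alpha)+H_\mu(\beta|\alpha)$ combined with $H_\mu(\beta|\alpha_n)\to 0$. Nothing to add.
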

	
	Let  $(X, G)$ be a $G$-system and $S$ an infinite subset of $G$.
	For a
	finite open cover $\mathcal{U}$ of $X$, let $\mathcal{N}(\mathcal{U})$ denote
	the minimal cardinality among all cardinalities of subcovers of
	$\mathcal{U}$. Then we define the {\it topological sequence entropy
		of $\mathcal{U}$ with respect to $\mathbf{F}$ along $S$} by
	$$h_{\mathrm{top}}^{S, \mathbf{F}}(G, \mathcal{U})=\limsup_{n \rightarrow \infty}\frac{1}{|S \cap F_n|}
	\log \mathcal{N}\left(\bigvee_{g \in S \cap F_n} g^{-1}\mathcal{U}\right).$$
	The {\it topological sequence entropy of $(X, G)$ with respect to
		$\mathbf{F}$ along $S$} is defined by
	$$h_{\mathrm{top}}^{S, \mathbf{F}}(G)=\sup_{\mathcal{U}} h_{\mu}^{S, \mathbf{F}}(G, \mathcal{U}),$$
	where the supremun is taken over all finite open covers of $X$.
	
	\medskip
	\begin{rem}
		In fact, the sequence entropy can be defined by just picking any F\o lner sequence of the group $G$.
		The reason why we fix a F\o lner sequence to define the sequence is stated as follows:
		\begin{enumerate}
			\item For $\mathbb{Z}$-actions, the topological sequence entropy and metric entropy are
			defined by fixing the F\o lner sequence $\{[0,n]\}_{n=1}^\infty$ and so
			for $G$-actions, we also define them similarly.
			\item The notions of mixing and spectrum can be defined by fixing a F\o lner sequence,
			and, in fact, the definitions are independent of  the choice of F\o lner sequences.
			In this paper, we will establish the relation between these notions and sequence entropy.
			Therefore, to some extent, sequence entropy is independent of the choice of F\o lner sequences.
			Then, when we study some properties,
			we may just fix a single F\o lner sequence to study them instead of considering all F\o lner sequences of $G$.
		\end{enumerate}
	\end{rem}
	\section{Discrete spectrum measures and sequence entropy}
	In this section, we give characterizations of discrete
	spectrum and weakly mixing measures via sequence entropy, which follow the ideas in
	\cite{HMY04} and \cite{HSY05}.
	
	Firstly, we discuss properties of
	the Kronecker algebra $\mathcal{K}_{\mu}$. Let us begin with the following
	lemmas.
	
	\medskip
	
	\begin{lem} \label{lem:3-1}
		{\rm (\cite[Lemma 4.15]{Wal82})} Let $(X,\mathcal{B}_X,\mu)$ be
		a Borel probability space and $r\geq 1$ be a fixed integer. For each
		$\epsilon >0$, there exists $\delta>0$ such that if
		$\alpha=\{A_1,A_2,\ldots,A_r\}$ and $\beta=\{B_1,B_2,\ldots,B_r\}$
		are any two finite measurable partitions of $(X,\mathcal{B}_X,\mu)$
		with $\sum_{j=1}^r\mu(A_j \bigtriangleup B_j)< \delta$, then
		$H_\mu(\alpha|\beta)+H_\mu(\beta|\alpha)<\epsilon$.
	\end{lem}
	
	\begin{lem} \label{lem:3-2}
		Let $(X,\mathcal{B}_X,\mu,G)$ be a $G$-MPS. Then for any finite
		measurable partition $\alpha \subset \mathcal{K}_{\mu}$ and any
		infinite subset $S$ of $G$, we have $h_{\mu}^{S, \mathbf{F}}(G,
		\alpha)=0$.
	\end{lem}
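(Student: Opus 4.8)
The plan is to show that partitions measurable with respect to the Kronecker algebra have sequence entropy zero by exploiting the fact that $\mathcal{K}_\mu$ carries the ``almost periodic'' part of the dynamics: the orbit $\{U_g 1_A : g \in G\}$ of each indicator $1_A$ with $A \in \mathcal{K}_\mu$ is precompact in $L^2$. The key idea is that precompactness forces the translated partitions $g^{-1}\alpha$ to cluster into finitely many $L^2$-close families, so that the joined partition $\bigvee_{g \in S \cap F_n} g^{-1}\alpha$ cannot have entropy growing linearly in $|S \cap F_n|$.

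First I would fix $\epsilon > 0$ and invoke Lemma~\ref{lem:3-1} with $r = |\alpha|$ to obtain a corresponding $\delta > 0$: whenever two $r$-set partitions are within $\delta$ in the $\sum_j \mu(\cdot \triangle \cdot)$ sense, their mutual conditional entropies sum to less than $\epsilon$. Next, using that each $1_A$ ($A \in \alpha \subset \mathcal{K}_\mu$) lies in $H_c = L^2(X,\mathcal{K}_\mu,\mu)$ and is therefore almost periodic, the set $\{U_g 1_A : g \in G\}$ is precompact. Applying this simultaneously to all (finitely many) $A \in \alpha$, I would extract a finite $\delta'$-net (in the appropriate product $L^2$-norm, with $\delta'$ chosen to translate into the set-difference bound $\delta$ via Chebyshev/Markov) for the family $\{g^{-1}\alpha : g \in G\}$. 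Concretely, there exist finitely many group elements $g_1, \ldots, g_N$ such that every $g^{-1}\alpha$ is within $\delta$ of some $g_k^{-1}\alpha$.

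With this net in hand, the combinatorial heart of the argument is a standard entropy-subadditivity estimate. For each $n$, partition $S \cap F_n$ into at most $N$ classes according to which net element $g_k^{-1}\alpha$ approximates $g^{-1}\alpha$. Using $H_\mu\bigl(\bigvee_{g} g^{-1}\alpha\bigr) \le H_\mu\bigl(\bigvee_{k} g_k^{-1}\alpha\bigr) + \sum_{g \in S \cap F_n} H_\mu(g^{-1}\alpha \mid g_{k(g)}^{-1}\alpha)$, where the first term is a fixed constant (at most $N \log r$) and each summand in the second term is bounded by $\epsilon$ via Lemma~\ref{lem:3-1}, I obtain
\begin{equation*}
	H_\mu\Bigl(\bigvee_{g \in S \cap F_n} g^{-1}\alpha\Bigr) \le N \log r + |S \cap F_n| \cdot \epsilon.
\end{equation*}
Dividing by $|S \cap F_n|$ and letting $n \to \infty$ gives $h_\mu^{S,\mathbf{F}}(G,\alpha) \le \epsilon$, and since $\epsilon$ was arbitrary the sequence entropy is zero.

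The main obstacle I anticipate is making the net argument fully rigorous: precompactness of $\{U_g 1_A\}$ gives closeness in $L^2$-norm, but Lemma~\ref{lem:3-1} is phrased in terms of the measure of symmetric differences of partition elements. Bridging these requires care, since partition elements are recovered from the indicators and two $L^2$-close indicators need not come from genuinely close \emph{partitions} unless one controls the superlevel sets; I would handle this by approximating each $g^{-1}\alpha$ by the partition generated by thresholding the conditional expectations (or by working with the characteristic functions directly and using that $\|1_A - 1_B\|_2^2 = \mu(A \triangle B)$), and choosing the net fineness $\delta'$ small enough that the induced set-difference bound stays below $\delta$. The subadditivity step itself is routine given the properties of conditional entropy recorded in the preliminaries.
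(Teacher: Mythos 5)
Your proposal is correct and follows essentially the same route as the paper's proof: precompactness of the orbits $\{U_g 1_A : g\in G\}$ yields a finite net of translated partitions, Lemma~\ref{lem:3-1} converts $L^2$-closeness into small conditional entropy, and subadditivity gives a bound of the form $N\log r + |S\cap F_n|\cdot\epsilon$. The only cosmetic differences are that the paper first reduces to two-set partitions $\{B, X\setminus B\}$ and draws its net from $F_N\cap S$, and your anticipated ``bridging'' obstacle is not a real one, since the identity $\|1_A-1_B\|_2^2=\mu(A\triangle B)$ (which the paper uses directly) makes the translation between the two notions of closeness immediate.
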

	
	\begin{proof}
		Let $\alpha=\{A_1, A_2, \cdots, A_n\}$ with $A_i \in
		\mathcal{K}_{\mu}$ for all $i=1, 2, \ldots, n$. Since
		$\bigvee_{i=1}^n\{A_i, X \setminus A_i\} \succeq \alpha$, to show
		that $h_{\mu}^{S, \mathbf{F}}(G, \alpha)=0$ for any infinite subset
		$S$ of $G$, it suffices to show that $h_{\mu}^{S, \mathbf{F}}(G,
		\{B, X \setminus B\})=0$ for any $B \in \mathcal{K}_{\mu}$ and any
		infinite subset $S$ of $G$.
		
		\medskip
		
		Let $B \in \mathcal{K}_{\mu}$ and $\eta=\{B, X \setminus B\}$. Since
		$\{U_g1_B: g\in G\}$ is precompact in $L^2(X,\mathcal{B}_X,\mu)$,
		for any $\delta >0$, there exists $N \in \mathbb{N}$ such that for
		any $g\in S$, $$\mu\left(g^{-1}B \bigtriangleup
		h_g^{-1}B\right)=\|U_g1_B-U_{h_g}1_B\|<\delta$$ for some $h_g\in F_N
		\cap S$. Thus, by Lemma \ref{lem:3-1}, one has that for any
		$\epsilon>0$, there exists $N \in \mathbb{N}$ such that for any
		$g\in S$,
		$$H_\mu\left(g^{-1}\eta| h_g^{-1}\eta\right)+H_\mu\left(
		h_g^{-1}\eta|g^{-1}\eta\right)<\epsilon$$ for some $h_g \in F_N \cap
		S$. This implies that for any $g \in S$,
		$$H_\mu\left(g^{-1}\eta|\bigvee_{h \in F_N \cap S} h^{-1}\eta\right)\leq
		H_\mu\left(g^{-1}\eta| h_g^{-1}\eta\right)<\epsilon.$$ Furthermore,
		we have
		\begin{eqnarray*}
			&& h_{\mu}^{S,\mathbf{F}}(G,\eta)=\limsup_{n \rightarrow
				\infty}\frac{1}{|F_n\cap S|}H_\mu\left(\bigvee_{g\in F_n\cap
				S}g^{-1}\eta\right)\\\\
			&=& \limsup_{n \rightarrow \infty}\frac{1}{|F_n\cap
				S|}H_\mu\left(\bigvee_{g\in F_N\cap S}g^{-1}\eta \vee \bigvee_{g \in
				(F_n \setminus F_N) \cap S} g^{-1}\eta\right)\\\\
			&=& \limsup_{n \rightarrow \infty}\frac{1}{|F_n\cap
				S|}\left[H_\mu\left(\bigvee_{g\in F_N\cap S}g^{-1}\eta\right) +
			H_{\mu}\left(\bigvee_{g \in (F_n \setminus F_N) \cap S} g^{-1}\eta|
			\bigvee_{g \in F_N \cap S}g^{-1}\eta\right)\right]\\\\
			&\leq& \limsup_{n \rightarrow \infty}\left[\frac{|F_N|}{|F_n\cap
				S|}H_{\mu}(\eta)+\frac{1}{|F_n\cap S|} \sum_{h \in (F_n \setminus
				F_N) \cap S} H_{\mu}\left(h^{-1} \eta |\bigvee_{g \in F_N \cap
				S}g^{-1}\eta\right)\right]\\\\
			&\leq& \epsilon.
		\end{eqnarray*}
		Since $\epsilon>0$ is arbitrary, one has
		$h^{S,\textbf{F}}_{\mu}(T,\eta)=0$. This finishes the proof.
	\end{proof}
	
	\begin{lem} \label{lem:3-3}
		Let $(X,\mathcal{B}_X,\mu,G)$ be a $G$-MPS. Then for any finite
		measurable partition $\alpha$ of $X$ and any infinite subset $S$ of
		$G$, we have $h^{S,\mathbf{F}}_{\mu}(G,\alpha)\leq
		H_\mu(\alpha|\mathcal{K}_\mu)$.
	\end{lem}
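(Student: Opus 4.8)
The plan is to compare $\alpha$ against finite partitions that exhaust the Kronecker algebra and to exploit Lemma~\ref{lem:3-2}, which tells us that partitions sitting inside $\mathcal{K}_{\mu}$ carry no sequence entropy. Since $(X,\mathcal{B}_X,\mu)$ is a standard probability space, $\mathcal{K}_{\mu}$ is countably generated modulo $\mu$-null sets, so I can fix an increasing sequence of finite measurable partitions $P_k\subset\mathcal{K}_{\mu}$ whose generated $\sigma$-algebras increase to $\mathcal{K}_{\mu}$.

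First I would insert $\bigvee_{g\in S\cap F_n}g^{-1}P_k$ into the partition $\bigvee_{g\in S\cap F_n}g^{-1}\alpha$. Since refining a partition only increases entropy and since $H_{\mu}(\gamma\vee\xi)=H_{\mu}(\xi)+H_{\mu}(\gamma|\xi)$, writing $\mathcal{Q}_n=\sigma\big(\bigvee_{g\in S\cap F_n}g^{-1}P_k\big)$ gives
$$H_{\mu}\Big(\bigvee_{g\in S\cap F_n}g^{-1}\alpha\Big)\le H_{\mu}\Big(\bigvee_{g\in S\cap F_n}g^{-1}P_k\Big)+H_{\mu}\Big(\bigvee_{g\in S\cap F_n}g^{-1}\alpha\ \Big|\ \mathcal{Q}_n\Big).$$
To control the conditional term I would enumerate $S\cap F_n=\{s_1,\dots,s_{k_n}\}$ with $k_n=|S\cap F_n|$ and apply the chain rule, obtaining $\sum_{j=1}^{k_n}H_{\mu}\big(s_j^{-1}\alpha\,|\,\bigvee_{i<j}s_i^{-1}\alpha\vee\mathcal{Q}_n\big)$. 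Because $s_j\in S\cap F_n$, the algebra $s_j^{-1}\sigma(P_k)$ is one of the factors of $\mathcal{Q}_n$, so the conditioning algebra contains $s_j^{-1}\sigma(P_k)$; monotonicity of conditional entropy in the conditioning algebra then bounds the $j$-th summand by $H_{\mu}(s_j^{-1}\alpha\,|\,s_j^{-1}P_k)$, which equals $H_{\mu}(\alpha\,|\,P_k)$ by $G$-invariance of $\mu$. Hence the whole conditional term is at most $k_nH_{\mu}(\alpha\,|\,P_k)$.

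Dividing the displayed inequality by $k_n$ and taking $\limsup_{n\to\infty}$ turns it into $h_{\mu}^{S,\mathbf{F}}(G,\alpha)\le h_{\mu}^{S,\mathbf{F}}(G,P_k)+H_{\mu}(\alpha\,|\,P_k)$, and Lemma~\ref{lem:3-2} kills the first term on the right because $P_k\subset\mathcal{K}_{\mu}$. Thus $h_{\mu}^{S,\mathbf{F}}(G,\alpha)\le H_{\mu}(\alpha\,|\,P_k)$ for every $k$. Finally I would let $k\to\infty$: as the $\sigma(P_k)$ increase to $\mathcal{K}_{\mu}$, the continuity of conditional entropy under increasing $\sigma$-algebras (martingale convergence) yields $H_{\mu}(\alpha\,|\,P_k)\to H_{\mu}(\alpha\,|\,\mathcal{K}_{\mu})$, which gives the claim.

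I expect the main obstacle to be the careful bookkeeping in the conditional chain-rule step, in particular verifying that each summand is genuinely dominated by $H_{\mu}(\alpha\,|\,P_k)$; this hinges on the containment $s_j^{-1}\sigma(P_k)\subseteq\mathcal{Q}_n$ together with the $G$-invariance identity $H_{\mu}(s_j^{-1}\alpha\,|\,s_j^{-1}P_k)=H_{\mu}(\alpha\,|\,P_k)$. The only other point needing attention is the passage $H_{\mu}(\alpha\,|\,P_k)\to H_{\mu}(\alpha\,|\,\mathcal{K}_{\mu})$, which is routine once the $P_k$ are chosen to generate $\mathcal{K}_{\mu}$.
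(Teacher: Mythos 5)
Your argument is correct and follows essentially the same route as the paper's proof: approximate $\mathcal{K}_{\mu}$ by finite partitions inside it, split $H_{\mu}\bigl(\bigvee_{g}g^{-1}(\alpha\vee P_k)\bigr)$ into the $P_k$-part (killed by Lemma~\ref{lem:3-2}) plus a conditional term bounded via subadditivity and invariance by $|S\cap F_n|\,H_{\mu}(\alpha|P_k)$, then let $k\to\infty$. The only cosmetic difference is that the paper picks partitions $\eta_k$ with $H_{\mu}(\alpha|\eta_k)\to H_{\mu}(\alpha|\mathcal{K}_{\mu})$ directly by separability, whereas you take an increasing generating sequence and invoke martingale convergence; both are fine.
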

	
	\begin{proof}
		Since $(X, \mathcal{B}_X)$ is separable, there exist countably many
		finite measurable partitions $\{\eta_k: k \in \mathbb{N}\}\subset
		\mathcal{K}_\mu$ such that $$\lim\limits_{k \rightarrow \infty
		}H_{\mu}(\alpha|\eta_k)=H_\mu(\alpha|\mathcal{K}_\mu).$$ For a fixed
		$k\in\mathbb{N}$ and an infinite subset $S$ of $G$, by Lemma
		\ref{lem:3-2}, one has
		\begin{equation}\label{3-1}
			\lim_{n\to \infty}\frac{1}{|F_n\cap S|}H_\mu\left(\bigvee_{g\in
				F_n\cap S}g^{-1}\eta_k\right)=0.
		\end{equation}
		Therefore, we have
		\begin{align*}
			h^{S,\textbf{F}}_{\mu}\left(G,\alpha\right)
			&=\limsup_{n\to \infty}\frac{1}{|F_n\cap S|}H_\mu\left(\bigvee_{g\in F_n\cap S}g^{-1}\alpha\right)\\
			&\leq \limsup_{n\to \infty}\frac{1}{|F_n\cap S|}H_\mu\left(\bigvee_{g\in F_n\cap S}g^{-1}\left(\alpha \vee \eta_k\right)\right)\\
			&=\limsup_{n\to \infty}\frac{1}{|F_n\cap
				S|}\left[H_\mu\left(\bigvee_{g\in F_n\cap S}g^{-1}\eta_k\right)+
			H_\mu\left(\bigvee_{g\in F_n\cap S}g^{-1}\alpha|\bigvee_{g\in F_n\cap S}g^{-1}\eta_k\right)\right]\\
			&\overset{\eqref{3-1}}=\limsup_{n\to \infty}\frac{1}{|F_n\cap S|}H_\mu\left(\bigvee_{g\in F_n\cap S}g^{-1}\alpha|\bigvee_{g\in F_n\cap S}g^{-1}\eta_k\right)\\
			&\leq \limsup_{n\to \infty}{\frac{1}{|F_n\cap S|}\sum_{g\in F_n\cap S}H_\mu\left(g^{-1}\alpha|g^{-1}\eta_k\right)}\\
			&=H_\mu\left(\alpha|\eta_k\right).
		\end{align*}
		By letting $k \rightarrow \infty$, one has
		$h^{S,\textbf{F}}_{\mu}(G,\alpha)\leq
		H_\mu(\alpha|\mathcal{K}_\mu)$, which completes the proof.
	\end{proof}
	
	Furthermore, we have the following result.
	
	\begin{thm}\label{thm:3-4}
		Let $(X,\mathcal{B}_X,\mu,G)$ be a $G$-MPS. Then for any finite
		measurable partition $\alpha$ of $X$, there is an infinite subset
		$S$ of $G$ such that $h^{S,\mathbf{F}}_{\mu}(G,\alpha)=
		H_\mu(\alpha|\mathcal{K}_\mu)$.
	\end{thm}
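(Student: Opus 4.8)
The plan is to lean on Lemma \ref{lem:3-3}, which already gives $h^{S,\mathbf{F}}_{\mu}(G,\alpha)\le H_\mu(\alpha|\mathcal{K}_\mu)$ for \emph{every} infinite $S$, so only the reverse inequality needs a suitable $S$. I would build $S=\{s_1,s_2,\dots\}$ inductively, choosing each $s_i$ in a strictly larger F\o lner set than its predecessors, so that $S\cap F_n$ is always an initial segment $\{s_1,\dots,s_{m_n}\}$ with $m_n\to\infty$. Writing $\beta_{i-1}=\bigvee_{j<i}s_j^{-1}\alpha$, I would arrange that $H_\mu\!\big(s_i^{-1}\alpha\,\big|\,\beta_{i-1}\big)\ge H_\mu(\alpha|\mathcal{K}_\mu)-\tfrac1i$. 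Granting this, the chain rule $H_\mu(\bigvee_{j=1}^{m_n}s_j^{-1}\alpha)=\sum_{j=1}^{m_n}H_\mu(s_j^{-1}\alpha|\beta_{j-1})$ combined with $\frac1{m_n}\sum_{j\le m_n}\frac1j\to0$ gives $\liminf_n\frac1{m_n}H_\mu(\bigvee_{j=1}^{m_n}s_j^{-1}\alpha)\ge H_\mu(\alpha|\mathcal{K}_\mu)$, and Lemma \ref{lem:3-3} closes the gap.

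Everything thus reduces to one selection step: for a fixed finite partition $\beta$ and any $\epsilon>0$, there are infinitely many $g\in G$ with $H_\mu(g^{-1}\alpha|\beta)\ge H_\mu(\alpha|\mathcal{K}_\mu)-\epsilon$. The crucial point is that one should \emph{not} try to make $g^{-1}\alpha$ and $\beta$ conditionally independent over all of $\mathcal{K}_\mu$ (this typically fails, as the Kronecker parts of the relevant correlations need not decay in $g$), but over a well-chosen \emph{finite} $\eta\subset\mathcal{K}_\mu$. Since $\eta\subseteq\mathcal{K}_\mu$, one has the floor bound $H_\mu(g^{-1}\alpha|\eta)\ge H_\mu(g^{-1}\alpha|\mathcal{K}_\mu)=H_\mu(\alpha|\mathcal{K}_\mu)$, the last equality by $G$-invariance of $\mathcal{K}_\mu$. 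Hence from $H_\mu(g^{-1}\alpha|\beta)\ge H_\mu(g^{-1}\alpha|\beta\vee\eta)=H_\mu(g^{-1}\alpha|\eta)-I_\mu(g^{-1}\alpha;\beta\,|\,\eta)$ it suffices to make the conditional mutual information $I_\mu(g^{-1}\alpha;\beta\,|\,\eta)$ less than $\epsilon$.

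To produce such $\eta$ and $g$, write $F_A=\mathbb{E}(1_A|\mathcal{K}_\mu)$ and $f_A=1_A-F_A$ for $A\in\alpha$. Because $F_A\in L^2(X,\mathcal{K}_\mu,\mu)=H_c$, the orbit $\{U_gF_A:g\in G\}$ is precompact, so $\mathbb{E}(\cdot|\eta_k)\to\mathrm{id}$ on it \emph{uniformly} in $g$ as $\eta_k\nearrow\mathcal{K}_\mu$; I can therefore fix a finite $\eta\subset\mathcal{K}_\mu$ making $\sup_g\|U_gF_A-\mathbb{E}(U_gF_A|\eta)\|_2$ as small as desired for every $A$. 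With this $\eta$ the off-diagonal correlation splits as
$$\mathbb{E}(1_{g^{-1}A}1_B|\eta)-\mathbb{E}(1_{g^{-1}A}|\eta)\mathbb{E}(1_B|\eta)=\mathbb{E}(U_gf_A\cdot 1_B\,|\,\eta)+\mathbb{E}\big((U_gF_A-\mathbb{E}(U_gF_A|\eta))\,1_B\,\big|\,\eta\big),$$
whose second term is uniformly small by the choice of $\eta$. For the first term, $f_A\in WM(X,G)$, and since $\eta$ is finite, $\|\mathbb{E}(U_gf_A\cdot 1_B|\eta)\|_1=\sum_{E\in\eta}|\langle U_gf_A,1_{B\cap E}\rangle|$ is a finite sum of correlations against the \emph{fixed} functions $1_{B\cap E}$, so by Theorem \ref{thm:2-3} its F\o lner average over $g\in F_n$ tends to $0$. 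Thus the correlations are small for infinitely many $g$, and Lemma \ref{lem:3-1}, applied inside each atom of $\eta$, turns this $L^1$-smallness into $I_\mu(g^{-1}\alpha;\beta\,|\,\eta)<\epsilon$, finishing the selection step.

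The main obstacle is precisely this selection step, and within it the insight that conditioning must be on a finite $\eta\subset\mathcal{K}_\mu$: relative independence over the whole $\mathcal{K}_\mu$ is unattainable, whereas over a finite $\eta$ the rapidly oscillating (in the Kronecker factor) part of the correlations averages out, while the floor bound $H_\mu(g^{-1}\alpha|\eta)\ge H_\mu(\alpha|\mathcal{K}_\mu)$ supplies exactly the right target. The surrounding bookkeeping—uniform approximation of $F_A$ along the precompact orbit, the passage from conditional mutual information to pairwise correlations via Lemma \ref{lem:3-1}, and the arrangement of the $s_i$ keeping $S\cap F_n$ an initial segment—is then routine.
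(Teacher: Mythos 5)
Your proof is correct, and its top-level skeleton (upper bound from Lemma \ref{lem:3-3}, inductive choice of $S$ with $S\cap F_n$ an initial segment, chain rule with an error term whose Ces\`aro average vanishes) coincides with the paper's; but the heart of the matter --- the one-step estimate $H_\mu(g^{-1}\alpha|\beta)\ge H_\mu(\alpha|\mathcal{K}_\mu)-\epsilon$ for infinitely many $g$ --- is obtained by a genuinely different mechanism. The paper never introduces a finite $\eta\subset\mathcal{K}_\mu$ at this stage: it writes $H_\mu(g^{-1}\alpha|\beta)$ out as $\sum_{k,j}-\langle U_g1_{A_k},1_{B_j}\rangle\log\bigl(\langle U_g1_{A_k},1_{B_j}\rangle/\mu(B_j)\bigr)$, uses $1_{A_k}-\mathbb{E}(1_{A_k}|\mathcal{K}_\mu)\in WM(X,G)$ to replace $1_{A_k}$ by $\mathbb{E}(1_{A_k}|\mathcal{K}_\mu)$ in the limit along a density-one set of $g$, and then a single application of Jensen's inequality (concavity of $-x\log x$ against the conditional measures $\mu_{B_j}$) bounds the resulting expression below by $H_\mu(\alpha|\mathcal{K}_\mu)$ outright. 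Your route --- the floor bound $H_\mu(g^{-1}\alpha|\eta)\ge H_\mu(g^{-1}\alpha|\mathcal{K}_\mu)=H_\mu(\alpha|\mathcal{K}_\mu)$ by monotonicity over a finite $\eta\subset\mathcal{K}_\mu$, plus smallness of the conditional mutual information $I_\mu(g^{-1}\alpha;\beta|\eta)$ --- is also valid, and your point that approximate conditional independence must be sought over a finite $\eta$ rather than over all of $\mathcal{K}_\mu$ is a genuine insight that makes the role of the Kronecker algebra transparent. The price is two technical steps you only sketch: the uniform approximation $\sup_g\|U_gF_A-\mathbb{E}(U_gF_A|\eta)\|_2\to 0$ along the precompact orbit (fine: contractions converging pointwise converge uniformly on precompact sets), and the passage from small $L^1$ conditional correlations to small $I_\mu(g^{-1}\alpha;\beta|\eta)$; for the latter, Lemma \ref{lem:3-1} is not really the right tool (it compares two partitions, not a joint distribution with the product of its marginals), and what is actually needed is uniform continuity of Shannon entropy on a fixed finite alphabet together with the crude bound $\log|\alpha|$ on atoms of $\eta$ of small mass. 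Your displayed identity also drops the cross term $-\mathbb{E}(U_gf_A|\eta)\mathbb{E}(1_B|\eta)$, which is controlled exactly like $\mathbb{E}(U_gf_A\cdot 1_B|\eta)$, so this is harmless. Net effect: a correct proof, conceptually cleaner in isolating why $\mathcal{K}_\mu$ is the exact threshold, but somewhat longer than the paper's Jensen-inequality shortcut.
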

	
	\begin{proof}
		Note that for any $A \in \mathcal{B}_X$,
		$1_A-\mathbb{E}(1_A|\mathcal{K}_{\mu}) \in WM(X,G)$. Thus, by Theorem
		\ref{thm:2-2}, there exists a subset $S'=\{s_i'\}_{i=1}^{\infty}$ of
		$G$ with $D_{\mathbf{F}}(S)=1$ such that
		$$\lim_{i \rightarrow \infty} \langle U_{s_i'}(1_A-\mathbb{E}(1_A|\mathcal{K}_{\mu})), 1_B\rangle=0$$
		for all $B \in \mathcal{B}_X$.
		
		\medskip
		
		Given a finite measurable partition $\beta$ of $X$, let
		$\alpha=\{A_1, A_2, \ldots, A_l\}$ and $\beta=\{B_1, B_2, \ldots,
		B_t\}$. Then, by the discussion above-mentioned, there exists a subset
		$S''=\{s_i''\}_{i=1}^\infty$ of $G$ with $D_{\mathbf{F}}(S'')=1$
		such that
		\begin{equation}\label{eq:3-2}
			\lim_{i\rightarrow \infty}\langle
			U_{s_i''}(1_{A_k}-\mathbb{E}(1_{A_k}|\mathcal{K}_{\mu})),1_{B_j}\rangle=0
		\end{equation}
		for any $1\leq k \leq l$ and $1\leq j\leq t$. Hence
		\begin{align*}
			&\liminf_{i\rightarrow \infty}{H_\mu(s_i''^{-1}\alpha|\beta)}\\
			=&\liminf_{i\rightarrow \infty}{\sum_{k,j}-\mu\left(s_i''^{-1}A_k\cap B_j\right)}
			\log{\left(\frac{\mu(s_i''^{-1}A_k\cap B_j)}{\mu(B_j)}\right)}\\
			=&\liminf_{i\rightarrow \infty}{\sum_{k,j}-\langle U_{s_i''}1_{A_k},1_{B_j}\rangle\log{\left(\frac{\langle U_{s_i''}1_{A_k},1_{B_j}\rangle}{\mu\left(B_j\right)}\right)}}\\
			\overset{\eqref{eq:3-2}}=&\liminf_{i\rightarrow\infty}{\sum_{k,j}-\langle
				U_{s_i''}\mathbb{E}\left(1_{A_k}|\mathcal{K}_{\mu}\right),1_{B_j}\rangle\log{\left(\frac{\langle
						U_{s_i''}\mathbb{E}\left(1_{A_k}|\mathcal{K}_{\mu}\right),1_{B_j}\rangle}{\mu\left(B_j\right)}\right)}}.
		\end{align*}
		Let $$a_{kj}^i=-\langle
		U_{s_i''}\mathbb{E}\left(1_{A_k}|\mathcal{K}_{\mu}\right),1_{B_j}\rangle
		\log{\left(\frac{\langle
				U_{s_i''}\mathbb{E}\left(1_{A_k}|\mathcal{K}_{\mu}\right),1_{B_j}\rangle}{\mu\left(B_j\right)}\right)}$$
		and $$\mu_{B_j}(\cdot)=\frac{\mu(\cdot\cap B_j)}{\mu(B_j)}.$$ By the
		concavity of $-x\log{x}$, we conclude that
		\begin{align*}
			\frac{a_{kj}^i}{\mu\left(B_j\right)}
			&=-\left(\int_{B_j}{\frac{U_{s_i''}\mathbb{E}\left(1_{A_k}|\mathcal{K}_{\mu}\right)}
				{\mu\left(B_j\right)}}\,\mathrm{d}\mu\right)\log{\left(\int_{B_j}
				{\frac{U_{s_i''}\mathbb{E}\left(1_{A_k}|\mathcal{K}_{\mu}\right)}{\mu\left(B_j\right)}}\,\mathrm{d}\mu\right)}\\
			&=-\left(\int_{B_j}{{U_{s_i''}\mathbb{E}\left(1_{A_k}|\mathcal{K}_{\mu}\right)}}\,\mathrm{d}\mu_{B_j}\right)
			\log{\left(\int_{B_j}{{U_{s_i''}\mathbb{E}\left(1_{A_k}|\mathcal{K}_{\mu}\right)}}\,\mathrm{d}\mu_{B_j}\right)}\\
			&\geq -\int_{B_j}{{U_{s_i''}\mathbb{E}\left(1_{A_k}|\mathcal{K}_{\mu}\right)}}
			\log{\left(U_{s_i''}\mathbb{E}\left(1_{A_k}|\mathcal{K}_{\mu}\right)\right)}\,\mathrm{d}\mu_{B_j}\\
			&=-\int_{B_j}{\frac{U_{s_i''}\mathbb{E}\left(1_{A_k}|\mathcal{K}_{\mu}\right)}{\mu\left(B_j\right)}}
			\log{\left(U_{s_i''}\mathbb{E}\left(1_{A_k}|\mathcal{K}_{\mu}\right)\right)}\,\mathrm{d}\mu.
		\end{align*}
		Therefore, we have
		\begin{align*}
			\sum_{k,j}a_{kj}^i
			&\geq- \sum_{k,j}\int_{B_j}{U_{s_i''}\mathbb{E}\left(1_{A_k}|\mathcal{K}_{\mu}\right)}
			\log{\left(U_{s_i''}\mathbb{E}\left(1_{A_k}|\mathcal{K}_{\mu}(G)\right)\right)}\,\mathrm{d}\mu\\
			&=-\sum_k \int_X{U_{s_i''}\mathbb{E}\left(1_{A_k}|\mathcal{K}_{\mu}\right)}
			\log{\left(U_{s_i''}\mathbb{E}\left(1_{A_k}|\mathcal{K}_{\mu}\right)\right)}\,\mathrm{d}\mu\\
			&=-\sum_k \int_X{\mathbb{E}\left(1_{A_k}|\mathcal{K}_{\mu}\right)}
			\log{\left(\mathbb{E}\left(1_{A_k}|\mathcal{K}_{\mu}\right)\right)}\,\mathrm{d}\mu\\
			&=H_{\mu}(\alpha|\mathcal{K}_{\mu}).
		\end{align*}
		This shows that
		\begin{equation}\label{eq:3-3}
			\liminf_{i\to \infty}{H_\mu\left(s_i''^{-1}\alpha|\beta\right)} \geq
			H_\mu\left(\alpha|\mathcal{K}_\mu\right).
		\end{equation}
		
		By using \eqref{eq:3-3} repeatedly, we can obtain inductively an infinite
		subset $S=\{s_i\}_{i=1}^{\infty}$ of $G$ such that for each $i \in
		\mathbb{N}$, one has
		$$H_\mu\left(s^{-1}_i\alpha|\bigvee_{j=1}^{i-1} s^{-1}_j\alpha\right)\geq H_\mu(\alpha|\mathcal{K}_\mu)-\frac{1}{2^i}.$$
		For each $n \in \mathbb{N}$, there exists $1 \leq
		i_1<i_2<\ldots<i_{k_n}$ such that $$F_n \cap S=\{s_{i_1}, s_{i_2},
		\ldots, s_{i_{k_n}}\}.$$ Hence
		\begin{eqnarray*}
			H_{\mu}\left(\bigvee_{g \in F_n \cap S}g^{-1}\alpha\right) &=&
			H_{\mu}\left(\bigvee_{j=1}^{k_n}s_{i_j}^{-1}\alpha\right)\\
			&=&
			H_{\mu}(s_{i_1}^{-1}\alpha)+H_{\mu}(s_{i_2}^{-1}\alpha|s_{i_1}^{-1}\alpha)+\ldots+
			H_{\mu}\left(s_{i_{k_n}}^{-1}\alpha|\bigvee_{j=1}^{k_n-1}s_{i_j}^{-1}\alpha\right)\\
			&\geq& \sum_{r=1}^{k_n}
			H_{\mu}\left(s_{i_r}^{-1}\alpha|\bigvee_{j=1}^{i_r-1}
			s_j^{-1}\alpha\right)\\
			&\geq&
			\sum_{r=1}^{k_n}\left(H_{\mu}(\alpha|\mathcal{K}_{\mu})-\frac{1}{2^{i_r}}\right)
			\geq |F_n \cap S| \cdot H_{\mu}(\alpha|\mathcal{K}_{\mu})-1.
		\end{eqnarray*}
		Therefore, we have
		\begin{align*}
			h^{S,\mathbf{F}}_{\mu}(G,\alpha) &=\limsup_{n \rightarrow \infty}\frac{1}{|F_n\cap S|}H_{\mu}\left(\bigvee_{g\in F_n\cap S}g^{-1}\alpha\right)\\
			&\geq \limsup_{n\rightarrow\infty}\frac{|F_n \cap S| \cdot
				H_{\mu}(\alpha|\mathcal{K}_{\mu})-1}{|F_n \cap
				S|}=H_{\mu}(\alpha|\mathcal{K}_\mu).
		\end{align*}
		This finishes the proof of Theorem \ref{thm:3-4}.
	\end{proof}
	
	\medskip
	
	We say
	that the $G$-MPS $(X, \mathcal{B}_X, \mu, G)$ is {\it null with
		respect to a F\o lner sequence $\mathbf{F}$}, if $h_{\mu}^{S,
		\mathbf{F}}(G)=0$ for any infinite subset $S$ of $G$. By Lemma
	\ref{lem:3-3} and Theorem \ref{thm:3-4}, we can obtain the following characterization of discrete spectrum via sequence entropy.
	
	\begin{thm} \label{thm:3-5}
		Let $(X,\mathcal{B}_X,\mu,G)$ be a $G$-MPS. Then the following two
		conditions are equivalent:
		\begin{itemize}
			\item[(1)] $\mu$ has discrete spectrum;
			
			\medskip
			
			\item[(2)] $(X,\mathcal{B}_X,\mu,G)$ is null with respect to any F\o lner sequence $\mathbf{F}$;
			\medskip
			
			\item[(3)] $(X,\mathcal{B}_X,\mu,G)$ is null with respect to some F\o lner sequence $\mathbf{F}$.
		\end{itemize}
		In particular, whether or not a $G$-MPS is null is independent of the choice of F\o lner sequences.
	\end{thm}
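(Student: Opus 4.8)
The plan is to prove the cycle of implications $(1)\Rightarrow(2)\Rightarrow(3)\Rightarrow(1)$, using Lemma~\ref{lem:3-3} and Theorem~\ref{thm:3-4} as the two halves of a conversion between the sequence entropy $h^{S,\mathbf{F}}_{\mu}(G)$ and the conditional entropy $H_{\mu}(\alpha|\mathcal{K}_{\mu})$ relative to the Kronecker algebra. The single fact that drives everything is that, for a fixed F\o lner sequence $\mathbf{F}$,
\[
\sup_{S}h^{S,\mathbf{F}}_{\mu}(G)=\sup_{\alpha}H_{\mu}(\alpha|\mathcal{K}_{\mu}),
\]
where the left supremum runs over all infinite subsets $S$ of $G$ and the right one over all finite measurable partitions $\alpha$ of $X$. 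Here the inequality $\le$ follows by taking suprema over $\alpha$ in Lemma~\ref{lem:3-3}, while $\ge$ follows because Theorem~\ref{thm:3-4} supplies, for each $\alpha$, an infinite $S$ realizing $h^{S,\mathbf{F}}_{\mu}(G,\alpha)=H_{\mu}(\alpha|\mathcal{K}_{\mu})$. Since sequence entropy is nonnegative, nullity with respect to $\mathbf{F}$ is exactly the vanishing of the left-hand side, so the theorem reduces to the statement that the right-hand side vanishes if and only if $\mathcal{K}_{\mu}=\mathcal{B}_{X}$.

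For $(1)\Rightarrow(2)$ I would assume $\mu$ has discrete spectrum, so $\mathcal{K}_{\mu}=\mathcal{B}_{X}$. Then every atom $A$ of a finite measurable partition $\alpha$ is $\mathcal{K}_{\mu}$-measurable, whence $\mathbb{E}(1_{A}|\mathcal{K}_{\mu})=1_{A}$ $\mu$-a.e.\ and $H_{\mu}(\alpha|\mathcal{K}_{\mu})=0$. Lemma~\ref{lem:3-3} then forces $h^{S,\mathbf{F}}_{\mu}(G,\alpha)=0$ for every $\alpha$ and every infinite $S\subseteq G$, so that $h^{S,\mathbf{F}}_{\mu}(G)=0$; as $\mathbf{F}$ and $S$ are arbitrary, the system is null with respect to any F\o lner sequence. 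The implication $(2)\Rightarrow(3)$ is immediate.

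For $(3)\Rightarrow(1)$ I would argue by contraposition. Suppose $\mu$ does not have discrete spectrum, i.e.\ $\mathcal{K}_{\mu}\subsetneq\mathcal{B}_{X}$ modulo $\mu$. The key step is to produce a single finite measurable partition with strictly positive conditional entropy: choosing $B\in\mathcal{B}_{X}$ that is not $\mathcal{K}_{\mu}$-measurable, the function $\mathbb{E}(1_{B}|\mathcal{K}_{\mu})$ fails to be $\{0,1\}$-valued on a set of positive measure, so $\alpha=\{B,X\setminus B\}$ satisfies $H_{\mu}(\alpha|\mathcal{K}_{\mu})>0$. Feeding this $\alpha$ into Theorem~\ref{thm:3-4}, for the given F\o lner sequence $\mathbf{F}$ there is an infinite $S\subseteq G$ with $h^{S,\mathbf{F}}_{\mu}(G,\alpha)=H_{\mu}(\alpha|\mathcal{K}_{\mu})>0$, hence $h^{S,\mathbf{F}}_{\mu}(G)>0$ and the system is not null with respect to $\mathbf{F}$. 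The concluding independence assertion then drops out of the equivalences: if the system is null for some $\mathbf{F}$, then $(3)\Rightarrow(1)$ gives discrete spectrum, and $(1)\Rightarrow(2)$ returns nullity for every F\o lner sequence.

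The only point that is not a direct citation of the earlier results is this passage from $\mathcal{K}_{\mu}\ne\mathcal{B}_{X}$ to a two-set partition of positive conditional entropy, and I expect it to be the main (though standard) obstacle, resting on the measure-theoretic fact that $H_{\mu}(\{B,X\setminus B\}|\mathcal{K}_{\mu})=0$ precisely when $1_{B}$ agrees $\mu$-a.e.\ with a $\mathcal{K}_{\mu}$-measurable function. Everything else is bookkeeping with the suprema over partitions and over infinite subsets $S$, combined with the nonnegativity of sequence entropy that lets nullity be read off from the displayed identity.
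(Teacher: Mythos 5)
Your proposal is correct and follows essentially the same route as the paper: $(1)\Rightarrow(2)$ via Lemma~\ref{lem:3-3} with $\mathcal{K}_{\mu}=\mathcal{B}_X$, and $(3)\Rightarrow(1)$ via Theorem~\ref{thm:3-4} applied to two-set partitions $\{B,X\setminus B\}$, using the fact that $H_{\mu}(\{B,X\setminus B\}|\mathcal{K}_{\mu})=0$ exactly when $B\in\mathcal{K}_{\mu}$ modulo $\mu$. The only cosmetic difference is that you run $(3)\Rightarrow(1)$ by contraposition where the paper argues directly, which changes nothing of substance.
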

	
	\begin{proof}
		(1) $\Rightarrow$ (2). Assmue that $\mu$ has discrete spectrum, that is,
		$\mathcal{K}_{\mu}=\mathcal{B}_X$. Then for any F\o lner sequence \textbf{F}, by Lemma \ref{lem:3-3},
		$h^{S,\mathbf{F}}_{\mu}(G,\alpha)\leq H_\mu(\alpha|\mathcal{B}_X)=0$
		for any finite measurable partition $\alpha$ and any infinite subset $S$
		of $G$, which implies $(X,\mathcal{B}_X,\mu,G)$ is null with respect
		to \textbf{F}.
		
		\medskip
		
		(2) $\Rightarrow$ (3). This is trival.
		
		\medskip
		
		(3) $\Rightarrow$ (1). Assume that $(X,\mathcal{B}_X,\mu,G)$ is null
		with respect to some F\o lner sequence $\mathbf{F}$. Given $B \in
		\mathcal{B}_X$, let $\alpha=\{B, X \setminus B\}$. Then by
		Theorem \ref{thm:3-4}, there is an infinite subset $S$ of $G$ such
		that
		$$H_\mu(\alpha|\mathcal{K}_\mu)=h^{S,\mathbf{F}}_{\mu}(G,\alpha)=0,$$
		which implies $B \in K_{\mu}$. Therefore,
		$\mathcal{K}_{\mu}=\mathcal{B}_X$, that is, $\mu$ has discrete
		spectrum.
	\end{proof}
	\begin{rem}
		Given a $G$-MPS $(X,\mathcal{B}_X,\mu,G)$, if its Koopman representation is compact (see Definition 2.22 in \cite{KL16}), then $\mu$ has discrete spectrum and hence its sequence entropy is zero.
	\end{rem}
	
	\medskip
	
	By a proof similar to that of Theorem \ref{thm:3-5}, we can obtain a
	characterization of weak mixing via sequence entropy.
	
	\begin{thm} \label{thm:3-6}
		Let $(X,\mathcal{B}_X,\mu,G)$ be a $G$-MPS. Then the following three conditions are
		equivalent:
		\begin{itemize}
			\item[(1)] $(X,\mathcal{B}_X,\mu,G)$ is weakly mixing;
			
			\item[(2)] for any finite measurable partition $\alpha$ of $X$, there exists some infinite subset $S$ of $G$
			such that $h_{\mu}^{S, \mathbf{F}}(G, \alpha)=H_{\mu}(\alpha)$;

			\item[(3)] for any non-trivial finite measurable partition $\alpha$ of $X$, there exists some infinite subset $S$ of $G$
			such that $h_{\mu}^{S, \mathbf{F}}(G, \alpha)>0$.
		\end{itemize}
	\end{thm}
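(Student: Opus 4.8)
The plan is to establish the cyclic chain $(1)\Rightarrow(2)\Rightarrow(3)\Rightarrow(1)$, leaning on Remark \ref{rem:2-3} (weak mixing is equivalent to triviality of the Kronecker algebra $\mathcal{K}_\mu$) together with the sandwich provided by Lemma \ref{lem:3-3} and Theorem \ref{thm:3-4}. The crucial observation is that when $\mathcal{K}_\mu=\{X,\emptyset\}$ is trivial, the conditional entropy $H_\mu(\alpha|\mathcal{K}_\mu)$ collapses to the unconditional entropy $H_\mu(\alpha)$; so in the weakly mixing case Theorem \ref{thm:3-4} upgrades directly to an identity $h_\mu^{S,\mathbf{F}}(G,\alpha)=H_\mu(\alpha)$.

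For $(1)\Rightarrow(2)$, I would assume weak mixing, invoke Remark \ref{rem:2-3} to get that $\mathcal{K}_\mu$ is trivial, and conclude $H_\mu(\alpha|\mathcal{K}_\mu)=H_\mu(\alpha)$ for every finite measurable partition $\alpha$. Theorem \ref{thm:3-4} then yields an infinite subset $S\subset G$ with $h_\mu^{S,\mathbf{F}}(G,\alpha)=H_\mu(\alpha|\mathcal{K}_\mu)=H_\mu(\alpha)$, which is exactly (2). For $(2)\Rightarrow(3)$, given a non-trivial partition $\alpha$ one has $H_\mu(\alpha)>0$ (some atom has measure strictly between $0$ and $1$), and condition (2) supplies an infinite $S$ with $h_\mu^{S,\mathbf{F}}(G,\alpha)=H_\mu(\alpha)>0$, giving (3).

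For $(3)\Rightarrow(1)$, I would argue by contraposition. If the system is not weakly mixing, then by Remark \ref{rem:2-3} the Kronecker algebra $\mathcal{K}_\mu$ is non-trivial, so there exists $B\in\mathcal{K}_\mu$ with $0<\mu(B)<1$. Setting $\alpha=\{B,X\setminus B\}$ gives a non-trivial partition contained in $\mathcal{K}_\mu$; hence by Lemma \ref{lem:3-2} (equivalently, by Lemma \ref{lem:3-3} since $H_\mu(\alpha|\mathcal{K}_\mu)=0$) we get $h_\mu^{S,\mathbf{F}}(G,\alpha)=0$ for every infinite subset $S\subset G$, so (3) fails for this $\alpha$. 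This closes the loop.

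As for the main obstacle: there is essentially none beyond correctly invoking the prepared machinery, since all the analytic effort resides in Theorem \ref{thm:3-4} (the inductive construction of a subset $S$ realizing the conditional entropy) and in Lemma \ref{lem:3-2}. The only points requiring a little care are the elementary equivalence \emph{$\alpha$ non-trivial $\iff H_\mu(\alpha)>0$}, and making sure the witnessing partition in $(3)\Rightarrow(1)$ is genuinely non-trivial, which is guaranteed by the choice $0<\mu(B)<1$.
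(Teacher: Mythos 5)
Your proposal is correct and follows exactly the route the paper intends: the paper gives no explicit proof, saying only that it is ``similar to that of Theorem \ref{thm:3-5}'', and your argument is precisely that adaptation, combining Remark \ref{rem:2-3} (weak mixing $\iff$ $\mathcal{K}_\mu$ trivial) with the sandwich from Lemma \ref{lem:3-3}/Theorem \ref{thm:3-4} for $(1)\Rightarrow(2)$ and Lemma \ref{lem:3-2} for the contrapositive of $(3)\Rightarrow(1)$. No gaps; the handling of non-triviality of $\alpha$ and of the witnessing set $B\in\mathcal{K}_\mu$ with $0<\mu(B)<1$ is exactly what is needed.
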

	
	\begin{rem}
		As an example of $G$-MPS with positive sequence entropy, we consider the Bernoulli actions. Let $Y$ be a Polish space. We consider the product topological space $Y^G$, which is also Polish.The product topology on $Y^G$ is generated by the cylinder sets $\prod_{s \in G} A_s$, where each $A_s$ is open and $A_s=Y$ for all $s$ outside of a finite subset of $G$. This generates the Borel $\sigma$-algebra on $Y^G$.
		Let $\nu$ be a Borel probability measure on $Y$. One can show that there is a unique Borel probability measure $\nu^G$ on $Y^G$ by Kolmogorov's extension theorem. Now we define the action $G$ on $Y^G$ by $(s x)_t=x_{s^{-1} t}$ for all $s, t \in G$ and $x \in Y^G$. This action preserves the measure $\nu^G$, and it is called a Bernoulli action.  Then $(Y^G,\mathcal{B}_{Y^G},\nu^G,G)$ is weak mixing (see for example \cite[Page 38]{KL16}). Thus, by Theorem \ref{thm:3-6}, this system has positive sequence entropy.
	\end{rem}

	\section{Topological mixing and topological sequence entropy}
	
	In this section, we characterize  topological weak
	mixing, and provide some necessary conditions of strong mixing and mild mixing for amenable group actions via topological sequence entropy.
	
	\subsection{Weak mixing} We say that a
	$G$-system $(X, G)$ is {\it (topologically) transitive} if for every
	two nonempty open subsets $U$ and $V$ of $X$, $$N(U, V)=\{g \in G:
	U \cap g^{-1}V \neq \emptyset\}$$ is nonempty. A $G$-system $(X,
	G)$ is called {\it (topologically) weakly mixing} if the product
	system $(X \times X, G)$ is transitive, i.e., for any four nonempty
	open sets $U_1, U_2, V_1, V_2$ of $X$,
	$$N(U_1 \times U_2, V_1 \times V_2)=N(U_1, V_1) \cap N(U_2, V_2) \neq \emptyset.$$
	Or explicitly, there exists a $g \in G$ with $U_1 \cap g^{-1}V_1
	\neq \emptyset$ and $U_2 \cap g^{-1}V_2 \neq \emptyset$. Clearly
	every weakly mixing system is transitive.
	
	\medskip
	
	In order to characterize the weak mixing via sequence entropy, we need the following result.
	\begin{thm} \label{thm:4-1}
		{\rm (\cite[Theorem 1.11]{Gla03} or \cite{Pet70})} For an Abelian group
		$G$ and a $G$-system $(X, G)$, the following conditions are
		equivalent:
		
		\begin{enumerate}
			\item[\textrm{(1)}] $(X, G)$ is weakly mixing;
			
			\medskip
			
			\item[\textrm{(2)}] for any nonempty open sets $U,V$, $N(U, V)$ is nonempty and for every
			four nonempty open sets  $U_1, U_2, V_1, V_2 \subset X$, there exist
			nonempty open sets $U, V$ with $N(U, V) \subset N(U_1, V_1) \cap
			N(U_2, V_2)$;
			
			\medskip
			
			\item[\textrm{(3)}] $N(U, U) \cap N(U, V) \neq \emptyset$ for every nonempty open sets $U, V$ of
			$X$;
		\end{enumerate}
	\end{thm}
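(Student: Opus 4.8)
The plan is to run a cycle of implications, isolating the one genuinely substantive step. Two implications are immediate from the definitions, so I would dispose of them first. For $(2)\Rightarrow(1)$: given four nonempty open sets, choose $U,V$ as in $(2)$; since $N(U,V)\neq\emptyset$ by the transitivity clause of $(2)$, the inclusion $N(U,V)\subseteq N(U_1,V_1)\cap N(U_2,V_2)$ forces the right-hand side to be nonempty, which is precisely the transitivity of $(X\times X,G)$, i.e. weak mixing. For $(1)\Rightarrow(3)$: weak mixing means $N(U_1,V_1)\cap N(U_2,V_2)\neq\emptyset$ for all nonempty open sets, and specializing to $U_1=U_2=V_1=U$, $V_2=V$ gives exactly $N(U,U)\cap N(U,V)\neq\emptyset$. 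Thus the real content lies in $(1)\Rightarrow(2)$ and $(3)\Rightarrow(1)$.

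The technical heart is a single translation construction, and this is exactly where commutativity of $G$ enters. Given nonempty open $A_1,A_2,B_1,B_2$ and an element $p\in N(A_1,A_2)\cap N(B_1,B_2)$, I would set $U=A_1\cap p^{-1}A_2$ and $V=B_1\cap p^{-1}B_2$, both nonempty open, with $U\subseteq A_1$, $V\subseteq B_1$, $pU\subseteq A_2$ and $pV\subseteq B_2$. The inclusions $U\subseteq A_1$, $V\subseteq B_1$ give $N(U,V)\subseteq N(A_1,B_1)$ for free. For the other factor, if $g\in N(U,V)$ choose $x\in U$ with $gx\in V$; then $px\in A_2$ and, using $gp=pg$, we get $g(px)=p(gx)\in pV\subseteq B_2$, so $g\in N(A_2,B_2)$. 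Hence $N(U,V)\subseteq N(A_1,B_1)\cap N(A_2,B_2)$. This yields $(1)\Rightarrow(2)$ at once: weak mixing supplies $p\in N(U_1,U_2)\cap N(V_1,V_2)$, and the construction with $A_i=U_i$, $B_i=V_i$ outputs the refining pair required by $(2)$.

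The remaining and hardest implication is $(3)\Rightarrow(1)$, the bootstrapping from the diagonal condition to full weak mixing. First, $(3)$ already forces transitivity, since $N(U,V)\supseteq N(U,U)\cap N(U,V)\neq\emptyset$, so every return-time set below is automatically nonempty. I would then climb in three instances of the same transitivity-plus-commutativity maneuver. Step one upgrades $(3)$ to $N(U,U)\cap N(V_1,V_2)\neq\emptyset$: pick $w\in N(V_1,U)$, put $W=V_1\cap w^{-1}U$, and apply $(3)$ to the pair $(wW,\,wV_2)$; the diagonal coordinate places $g$ in $N(U,U)$, while a point $x=wx'\in wW$ with $gx\in wV_2$ gives, via $g(wx')=w(gx')$, a point $x'\in V_1$ with $gx'\in V_2$. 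Step two feeds this element into the translation construction with $A_1=A_2=U$, $B_1=V_1$, $B_2=V_2$ to produce $N(U,V_1)\cap N(U,V_2)\neq\emptyset$. Step three recovers full weak mixing: given $U_1,U_2,V_1,V_2$, pick $t\in N(U_1,U_2)$, set $U=U_1\cap t^{-1}U_2$, and apply step two with source $U$ and targets $V_1$ and $t^{-1}V_2$; a common $g\in N(U,V_1)\cap N(U,t^{-1}V_2)$ lies in $N(U_1,V_1)$ trivially and in $N(U_2,V_2)$ because $tx'\in U_2$ and $g(tx')=t(gx')\in V_2$.

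The main obstacle I anticipate is precisely this bootstrapping. The permutation-type identities among the return-time sets are all reversible, so merely shuffling the roles of the four sets can never escape the diagonal configuration that $(3)$ hands us; genuine progress comes only from the non-reversible act of \emph{shrinking} an open set to a translate $A_1\cap p^{-1}A_2$ and transporting points by the commuting action. One must also check carefully that the element delivered by $(3)$ (or by the previous step) is the \emph{same} $g$ serving both factors of the target intersection. Commutativity is indispensable throughout: without the identity $g(px)=p(gx)$ the transport collapses, which is exactly why the hypothesis restricts $G$ to be abelian.
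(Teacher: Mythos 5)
The paper does not prove Theorem \ref{thm:4-1}; it is quoted from Glasner's book and Petersen's paper. Your argument is correct and is essentially the standard one found in those sources: the easy implications $(2)\Rightarrow(1)$ and $(1)\Rightarrow(3)$, the translation lemma sending $p\in N(A_1,A_2)\cap N(B_1,B_2)$ to the shrunken pair $U=A_1\cap p^{-1}A_2$, $V=B_1\cap p^{-1}B_2$ with $N(U,V)\subseteq N(A_1,B_1)\cap N(A_2,B_2)$, and the three-step bootstrap from the diagonal condition $(3)$ back to full weak mixing, with commutativity entering exactly where you say it does. I see no gaps.
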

	
	Recall that an open cover $\mathcal{U}=\{U_1, U_2, \ldots, U_n\}$ of
	$X$ is called {\it non-trivial} if $U_i$ is not dense in $X$ for
	every $1 \leq i \leq n$, and {\it standard} if $n=2$. An open cover
	$\mathcal{U}=\{U_i\}_{i \in I}$ is called {\it admissible} if $U_i
	\setminus \left(\bigcup_{j \in I, j \neq i}U_j\right)$ has nonempty
	interior for each $i \in I$.
	
	\medskip
	
	Now we provide a characterization of
	weakly mixing systems via sequence entropy.
	
	\begin{thm} \label{thm:4-2}
		Let $G$ be an Abelian group. Then for every $G$-system $(X,G)$, the following statements
		are equivalent:
		
		\begin{enumerate}
			\item $(X, G)$ is weakly mixing;
			
			\medskip
			
			\item for each admissible open cover $\mathcal{U}$, there exists an
			infinite subset $S \subset G$ such that $h_{\mathrm{top}}^{S,
				\mathbf{F}}(G, \mathcal{U})=\log \mathcal{N}(\mathcal{U})$;
			
			\medskip
			
			\item for each non-trivial finite open cover $\mathcal{U}$, there exists an infinite subset
			$S \subset G$ such that $h_{\mathrm{top}}^{S, \mathbf{F}}(G,
			\mathcal{U})>0$;
			
			\medskip
			
			\item for each standard open cover $\mathcal{U}$, there exists an infinite subset $S \subset G$
			such that $h_{\mathrm{top}}^{S, \mathbf{F}}(G, \mathcal{U})>0$.
		\end{enumerate}
	\end{thm}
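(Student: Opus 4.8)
The plan is to establish all four equivalences through the implications $(1)\Rightarrow(2)$, $(1)\Rightarrow(3)$, $(2)\Rightarrow(4)$, $(3)\Rightarrow(4)$ and $(4)\Rightarrow(1)$; since these yield $(1)\Leftrightarrow(2)\Leftrightarrow(4)$ and $(1)\Leftrightarrow(3)\Leftrightarrow(4)$, they close all the equivalences. The engine of the two ``positive entropy'' directions is a single combinatorial lemma: \emph{if $(X,G)$ is weakly mixing and $W_1,\dots,W_k$ are nonempty open sets, then there is an infinite set $S=\{s_1,s_2,\dots\}\subseteq G$ which is an independence set for $(W_1,\dots,W_k)$}, meaning $\bigcap_{g\in F}g^{-1}W_{\sigma(g)}\neq\emptyset$ for every finite $F\subseteq S$ and every $\sigma\colon F\to\{1,\dots,k\}$. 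I would build $S$ recursively: given $s_1,\dots,s_m$ realizing independence, the nonempty open atoms are $W_\tau=\bigcap_{l\le m}s_l^{-1}W_{\tau(l)}$, and I must find $s_{m+1}\in\bigcap_{\tau,i}N(W_\tau,W_i)$. The key point is that this finite intersection of hitting-time sets is nonempty (indeed infinite, so $s_{m+1}$ can be taken distinct from the previous ones). This is weak mixing of all orders, which I would extract from Theorem~\ref{thm:4-1}: its condition~(2) lets one replace two pairs $(U_1,V_1),(U_2,V_2)$ by a single pair $(U,V)$ with $N(U,V)\subseteq N(U_1,V_1)\cap N(U_2,V_2)$ and $N(U,V)\neq\emptyset$; pairing up and iterating collapses any finite family of hitting-time sets to one nonempty (and, for a nontrivial system, infinite) such set.

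For $(1)\Rightarrow(2)$, let $\mathcal{U}=\{U_1,\dots,U_d\}$ be admissible and set $V_i=\mathrm{int}\big(U_i\setminus\bigcup_{j\neq i}U_j\big)$, which is nonempty with $V_i\cap U_j=\emptyset$ for $j\neq i$; in particular $\mathcal{N}(\mathcal{U})=d$. Applying the lemma to $(V_1,\dots,V_d)$ produces $S$, and for each $\sigma\colon F\to\{1,\dots,d\}$ with $F=S\cap F_n$ I choose $x_\sigma\in\bigcap_{g\in F}g^{-1}V_{\sigma(g)}$. Since $gx_\sigma\in V_{\sigma(g)}$ lies in no $U_j$ with $j\neq\sigma(g)$, the only atom of $\bigvee_{g\in F}g^{-1}\mathcal{U}$ containing $x_\sigma$ is the one indexed by $\sigma$; hence all $d^{|F|}$ atoms are essential and $\mathcal{N}\big(\bigvee_{g\in F}g^{-1}\mathcal{U}\big)=d^{|F|}$. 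With the trivial reverse bound this gives $h_{\mathrm{top}}^{S,\mathbf{F}}(G,\mathcal{U})=\log d=\log\mathcal{N}(\mathcal{U})$.

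For $(1)\Rightarrow(3)$, let $\mathcal{U}=\{U_1,\dots,U_n\}$ be non-trivial, so each $C_i:=X\setminus\overline{U_i}$ is nonempty open (and $n\ge 2$). I apply the lemma to $(C_1,\dots,C_n)$ to get $S$ and again pick $x_\sigma\in\bigcap_{g\in F}g^{-1}C_{\sigma(g)}$ for every $\sigma\colon F\to\{1,\dots,n\}$. Now $gx_\sigma\notin U_{\sigma(g)}$, so any atom $\bigcap_g g^{-1}U_{c(g)}$ containing $x_\sigma$ must satisfy $c(g)\neq\sigma(g)$ for all $g$; thus each atom in a subcover accounts only for those $\sigma$ differing from it in every coordinate, of which there are $(n-1)^{|F|}$. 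As all $n^{|F|}$ functions $\sigma$ occur, $\mathcal{N}\big(\bigvee_{g\in F}g^{-1}\mathcal{U}\big)\ge\big(n/(n-1)\big)^{|F|}$, whence $h_{\mathrm{top}}^{S,\mathbf{F}}(G,\mathcal{U})\ge\log\frac{n}{n-1}>0$. The implications $(2)\Rightarrow(4)$ and $(3)\Rightarrow(4)$ are then immediate: a non-trivial standard cover $\{U_1,U_2\}$ is admissible, since $U_i\setminus U_{3-i}=X\setminus U_{3-i}$ has interior $X\setminus\overline{U_{3-i}}\neq\emptyset$, and has $\mathcal{N}=2$, so $(2)$ already gives it full entropy $\log 2>0$, while it is also a non-trivial finite cover, so $(3)$ applies.

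Finally I would prove $(4)\Rightarrow(1)$ by contraposition, and this is where the main difficulty lies. If $(X,G)$ is not weakly mixing then, by Theorem~\ref{thm:4-1}(3), there are nonempty open $U,V$ with $N(U,U)\cap N(U,V)=\emptyset$; taking $g=e_G$ forces $U\cap V=\emptyset$, and shrinking to $U'\Subset U$, $V'\Subset V$ I may assume $\overline{U}\cap\overline{V}=\emptyset$ while keeping the disjointness relation (which only shrinks the hitting-time sets). Then $\mathcal{U}=\{X\setminus\overline{U},\,X\setminus\overline{V}\}$ is a non-trivial standard cover, and the aim is to show $h_{\mathrm{top}}^{S,\mathbf{F}}(G,\mathcal{U})=0$ for \emph{every} infinite $S$, contradicting $(4)$. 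The point is that a point's $\mathcal{U}$-name over $F=S\cap F_n$ is encoded by the disjoint pair $\big(\{g\in F:gx\in\overline{U}\},\{g\in F:gx\in\overline{V}\}\big)$, and the relation $N(U,U)\cap N(U,V)=\emptyset$, together with its translates, restricts which such pairs can occur. Converting this restriction into a subexponential bound on the number of nonempty atoms is the principal obstacle of the converse; it is precisely here that the hypothesis that $G$ is Abelian enters, through the availability of Theorem~\ref{thm:4-1}.
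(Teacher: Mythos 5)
Your implications $(1)\Rightarrow(2)$, $(1)\Rightarrow(3)$, $(2)\Rightarrow(4)$, $(3)\Rightarrow(4)$ follow essentially the paper's route (the ``independence set'' lemma you extract from Theorem \ref{thm:4-1}(2) is exactly the paper's inductive Claim, and your $(n-1)^{|F|}$ counting for non-trivial covers is the paper's inequality $(l-1)^{|F|}\mathcal{N}(\bigvee_{g\in F}g^{-1}\mathcal{U})\ge|\{s:\bigcap_g g^{-1}W_{s(g)}\neq\emptyset\}|$, applied a bit more directly since you skip the coarsening to pairwise disjoint closed neighborhoods). The problem is $(4)\Rightarrow(1)$: you set up the right standard cover from the pair $U,V$ with $N(U,U)\cap N(U,V)=\emptyset$, but then you explicitly stop at ``converting this restriction into a subexponential bound on the number of nonempty atoms is the principal obstacle.'' That obstacle is the entire content of the implication, so as written the proof is not complete --- you have identified the statement to be proved, not proved it.

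The missing step is short but it is a genuine idea, and here is how the paper closes it. From $N(U,U)\cap N(U,V)=\emptyset$ one gets, for \emph{every} $g\in G$, either $U\cap g^{-1}U=\emptyset$ or $U\cap g^{-1}V=\emptyset$; taking closed sets $V_1\subset U$, $V_2\subset V$ with nonempty interior and the standard cover $\mathcal{V}=\{X\setminus V_1,\,X\setminus V_2\}$, this means that for each $g$ there is a single member $W_g\in\mathcal{V}$ with $V_1\subset g^{-1}W_g$. Now fix any infinite $S$ and write $S\cap F_n=\{g_1,\dots,g_{k_n}\}$. If $x\notin\widetilde V_0:=\bigcap_{i}g_i^{-1}(X\setminus V_1)$, then $g_ix\in V_1$ for some $i$, hence $g_ix\in g^{-1}W_g$ for all $g$; specializing to $g=g_jg_i^{-1}$ gives $x\in g_j^{-1}W_{g_jg_i^{-1}}$ for every $j$, i.e.\ $x\in\widetilde V_i:=\bigcap_j g_j^{-1}W_{g_jg_i^{-1}}$, a member of $\bigvee_{g\in S\cap F_n}g^{-1}\mathcal{V}$ depending only on $i$. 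So $\{\widetilde V_0,\widetilde V_1,\dots,\widetilde V_{k_n}\}$ is a subcover of cardinality $\le |S\cap F_n|+1$, the atom count grows linearly rather than exponentially, and $h^{S,\mathbf{F}}_{\mathrm{top}}(G,\mathcal{V})=0$ for every infinite $S$, contradicting $(4)$. (Note also that your guess about where commutativity enters is only half right: in this direction the Abelian hypothesis is used solely to invoke Theorem \ref{thm:4-1}(3); the counting above is a bookkeeping argument with the translates $g_jg_i^{-1}$ and does not itself need $G$ Abelian.) Until you supply an argument of this kind, the chain of implications does not close and the theorem is not proved.
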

	
	\begin{proof}
		(2) $\Rightarrow$ (4) and (3) $\Rightarrow$ (4) are trivial.
		
		\medskip
		
		(4) $\Rightarrow$ (1). By contradiction, we assume that $(X, G)$ is not weakly mixing.
		Then by Theorem \ref{thm:4-1} there exist nonempty open sets $U_1$
		and $U_2$ such that
		\begin{equation}\label{eq1}
			N(U_1, U_1) \cap N(U_1, U_2)=\emptyset.
		\end{equation}
		Clearly, $U_1 \cap U_2=\emptyset$. Since $X$ is compact, for each $i=1,2$, we take a closed subset $V_i \subset U_i$ with nonempty interior. Then
		$\mathcal{V}=\{X \setminus V_1, X \setminus V_2\}$ is a standard
		open cover of $X$. By \eqref{eq1}, one has that for any $g \in G$ we have $U_1 \cap
		g^{-1}U_1=\emptyset$ or $U_1 \cap g^{-1}U_2=\emptyset$. Thus, for any $g\in G$, there exists
		$W_g=X \setminus V_1$ or $W_g=X \setminus V_2$ such that $V_1
		\subset g^{-1}W_g$.
		
		\medskip
		
		For any infinite set $S \subset G$ and $n \in \mathbb{N}$, set $S
		\cap F_n=\{g_1, g_2, \ldots, g_{k_n}\}$. If a point $x$ does not belong to
		$\widetilde{V}_0=\bigcap_{i=1}^{k_n}g_i^{-1}(X\setminus V_1)$, then there exists
		$i\in\{1,2,\ldots,k_n\}$ such that
		$g_ix \in V_1$, which implies that for any $g \in G$ we have $g_ix \in g^{-1}W_g$. In particular,
		letting $g=g_ig_j^{-1}$ (for
		$j=1,2,\ldots,k_n$) we have
		\[g_ix\in g_ig_j^{-1}W_{g_jg_i^{-1}}\]
		which implies that
		$$x \in g_j^{-1}W_{g_ig_j^{-1}}.$$
		Thus,
		$$x \in \widetilde{V}_i:=\bigcap_{j=1}^{k_n}g_j^{-1}W_{g_jg_i^{-1}}.$$
		This set depends only on $i$, so we have a subcover
		consisting of $k_n+1$ sets $\widetilde{V}_i$, $i=0, 1, \ldots,
		k_n$.
		
		\medskip
		
		Therefore, for all $n \in \mathbb{N}$, we have
		$$\mathcal{N}\left(\bigvee_{g \in S \cap F_n}g^{-1}\mathcal{V}\right) \leqslant |S \cap F_n|+1.$$
		This implies that $h_{\mathrm{top}}^{S, \mathbf{F}}(G,
		\mathcal{V})=0$.
		
		\medskip
		
		(1) $\Rightarrow$ (2). Assume that $(X, G)$ is weakly mixing
		and $\mathcal{U}=\{U_1, U_2, \ldots, U_l\}$ is an admissible open
		cover. Let $$W_i=\mathrm{int}\left(U_i \setminus \bigcup_{j \neq
			i}U_j\right) \mbox{\ \ for each\ } i=1, 2, \ldots, l.$$ Then $W_1,
		W_2, \ldots, W_l$ are pairwise disjoint nonempty open sets of $X$.
		
		\medskip
		
		{\it Claim}. There exists a sequence $S=\{g_n\}_{n=1}^\infty$ of distinct elements of $G$  and an increasing
		sequence $\{m_n\}_{n=1}^{\infty}$ of positive integers such that, for each $n \geq 1$,
		$g_n\in F_{m_{n}+1}\setminus F_{m_n}$ and, for any $s \in \{1, 2, \ldots, l\}^{n}$,
		$$\bigcap_{i=1}^ng_i^{-1}W_{s(i)} \neq \emptyset.$$
		
		\begin{proof}[Proof of Claim]
			We use induction on $n$. It is obvious that
			the claim holds for $n=1$. Assume that the claim holds for $n=k$,
			next we want to show that the claim also holds for $n=k+1$. By the
			assumption, there exist distinct elements $g_1, g_2, \ldots,
			g_k$ of $G$ and positive integers $m_1<m_2<\ldots<m_k$  such that for each $i\in\{1,2,\ldots,k\}$,
			$g_i\in F_{m_{i}+1}\setminus F_{m_i}$ and
			$$\bigcap_{i=1}^n g_i^{-1}W_{s(i)} \neq
			\emptyset \mbox{\ \ for all\ } s \in \{1, 2, \ldots, l\}^n.$$
			Let $m_{k+1}$ be large enough, so that $g_1,\ldots,g_k
			\in F_{m_{k+1}}$. By Theorem \ref{thm:4-1} (2), we have
			$$\bigcap_{s \in \{1, 2, \ldots, l\}^{n}} \bigcap_{j=1}^l
			N\left(\bigcap_{ i=1}^ng_i^{-1}W_{s(i)}, W_j\right)$$ is an
			infinite subset of $G$, and hence there exists $g_{k+1} \notin
			F_{m_{k+1}}$ with $g_{k+1}\neq g_i$ for each $i=1,2,\ldots,k$ such that
			$$g_{k+1} \in   \bigcap_{s \in \{1, 2, \ldots, l\}^{n}} \bigcap_{j=1}^l
			N\left(\bigcap_{ i=1}^ng_i^{-1}W_{s(i)}, W_j\right).$$ Through this iterative process,
			we obtain the sequence $S=\{g_n\}_{n=1}^\infty$ and an increasing sequence $\{m_n\}_{n=1}^\infty$
			of positive integers satisfying the claim.
		\end{proof}
		
		By the above claim, we have
		
		\begin{align*}
			h_{\mathrm{top}}^{S, \mathbf{F}}(G, \mathcal{U}) =& \limsup_{n
				\rightarrow \infty} \frac{1}{|F_n \cap S|} \log \mathcal{N}\left(\bigvee_{g
				\in F_n \cap S} g^{-1}\mathcal{U}\right)\\
			\geqslant& \limsup_{n \rightarrow \infty}\frac{1}{|F_{m_{n+1}} \cap
				S|} \log \mathcal{N}\left(\bigvee_{g \in F_{m_{n+1}} \cap S}
			g^{-1}\mathcal{U}\right)\\
			=& \limsup_{n \rightarrow \infty}\frac{1}{|C_n|} \log
			\mathcal{N}\left(\bigvee_{g \in C_n} g^{-1}\mathcal{U}\right)\\
			=& \limsup_{n \rightarrow \infty}\frac{1}{n} \log l^n=\log l=\log
			\mathcal{N}(\mathcal{U}).
		\end{align*}
		Meanwhile, as $h_{\mathrm{top}}^{S, \mathbf{F}}(G, \mathcal{U}) \leqslant
		\log \mathcal{N}(\mathcal{U})$, we have $h_{\mathrm{top}}^{S, \mathbf{F}}(G,
		\mathcal{U})=\log \mathcal{N}(\mathcal{U})$.
		
		\medskip
		
		(1) $\Rightarrow$ (3).
		Let $\mathcal{V}=\{V_1, V_2, \ldots, V_k\}$ be a non-trivial
		finite open cover. Take $x_j \in \mathrm{int}(X \setminus V_j)$ for
		every $1 \leq j \leq k$. Set $\{y_1, y_2, \ldots,
		y_l\}=\{x_1, x_2, \ldots, x_k\}$ with $y_s \neq y_t$ for $1
		\leq s<t \leq l$. Clearly, $l \geq 2$ and we can take
		pairwise disjoint closed neighborhood $W_i$ of $y_i$, $i=1, 2,
		\ldots, l$ such that the open cover $\mathcal{U}=\{X \setminus W_1,
		X \setminus W_2, \ldots, X \setminus W_l\}$ is coarser than
		$\mathcal{V}$. Given any finite subset $F$ of $G$, let $\mathcal{P}$ be the subcover
		of $\bigvee_{g\in F} g^{-1}\mathcal{U}$ with the minimum cardinality.
		Thus, for any $s\in\{1,2,\ldots,n\}^{|F|}$, if $$\bigcap_{g\in F}g^{-1}W_{s(g)}\neq \emptyset,$$ then there exist $V\in\mathcal{P}$ with the form
		\[V=\bigcap_{g\in F}X\setminus W_{s'(g)}\text{ for some }s'\in\{1,2,\ldots,n\}^{|F|}\text{ with }s'(g)\neq s(g)\text{ for }\forall g\in F\]
		such that $V$ covers $\bigcap_{g\in F}g^{-1}W_{s(g)}$.
		Note that such $V$ cannot cover $\bigcap_{g\in F}g^{-1}W_{s(g)}\neq\emptyset$ with $s(g)=s'(g)$ for some $g\in F$.
		Thus, each such $V \in \mathcal{P}$ covers at most $(l-1)^{|F|}$ $s\in \{1,2,\ldots,n\}^{|F|}$,
		which together with the fact that $\mathcal{P}$ is an open cover of $X$ implies that
		\begin{align*}
			\mathcal{N}\left(\bigvee_{g\in F} g^{-1}\mathcal{U}\right)\cdot(l-1)^{|F|}&=\mathcal{N}\left(\mathcal{P}\right)\cdot(l-1)^{|F|}\\
			&\ge    \left|\left\{s \in \{1, 2, \ldots, l\}^F: \bigcap_{g \in
				F}g^{-1}W_{s(g)} \neq \emptyset\right\}\right|.
		\end{align*} By a proof of similar that of claim in (1) $\Rightarrow$ (2),
		there exists a sequence $S=\{g_n\}_{n=1}^\infty$ of distinct elements of $G$  and an increasing
		sequence $\{m_n\}_{n=1}^{\infty}$ of positive integers such that for each $n \geq 1$,
		$g_n\in F_{m_{n}+1}\setminus F_{m_n}$ and for any $s \in \{1, 2, \ldots, l\}^{n}$,
		$$\bigcap_{i=1}^ng_i^{-1}W_{s(i)} \neq \emptyset.$$
		Therefore, we have
		\begin{eqnarray*}
			h_{\mathrm{top}}^{S, \mathbf{F}}(G, \mathcal{U}) &=& \limsup_{n
				\rightarrow \infty} \frac{1}{|F_n \cap S|} \log \mathcal{N}\left(\bigvee_{g
				\in F_n \cap S} g^{-1}\mathcal{U}\right)\\\\
			&\geq& \limsup_{n \rightarrow \infty}\frac{1}{|F_{m_{n+1}} \cap
				S|} \log \mathcal{N}\left(\bigvee_{g \in F_{m_{n+1}} \cap S}
			g^{-1}\mathcal{U}\right)\\\\
			&=& \limsup_{n \rightarrow \infty}\frac{1}{n} \log
			\mathcal{N}\left(\bigvee_{i=1}^n g_i^{-1}\mathcal{U}\right)\\\\
			&\geq& \limsup_{n \rightarrow \infty}\frac{1}{n}\log
			\frac{\left|\displaystyle{\left\{s \in \{1, 2, \ldots, l\}^{n}:
					\bigcap_{i=1}^ng_i^{-1}W_{s(i)} \neq
					\emptyset\right\}}\right|}{(l-1)^n}\\\\
			&=& \log\frac{l}{l-1}>0.
		\end{eqnarray*}
		So $h_{\mathrm{top}}^{S, \mathbf{F}}(G, \mathcal{V}) \geqslant
		h_{\mathrm{top}}^{S, \mathbf{F}}(G, \mathcal{U})>0$.
	\end{proof}
	
	\subsection{Strong mixing}
	In this subsection, we discuss the relation between strong topological mixing
	and sequence entropy. Let us recall that a $G$-system $(X, G)$ is
	called {\it strongly (topologically) mixing} if for every two
	nonempty open subsets $U$ and $V$ of $X$, $N(U, V)$ is cofinite,
	i.e., $\{g \in G: U \cap g^{-1}V=\emptyset\}$ is finite. This definition is classical, see \cite{CKN,KR}. The classical strongly mixing example is the topological
	Bernoulli system.
	
	\medskip
	
	We have the following result.
	\begin{thm} \label{thm:4-3}
		Let $(X, G)$ be a $G$-system. If $(X, G)$ is strongly mixing, then the following
		properties hold:

		\begin{enumerate}
			\item for each admissible open cover $\mathcal{U}$ and any infinite subset $H$ of $G$, there exists an
			infinite subset $S \subset H$ such that $h_{\mathrm{top}}^{S,
				\mathbf{F}}(G, \mathcal{U})=\log \mathcal{N}(\mathcal{U})$;
			
			\medskip
			
			\item for each non-trivial finite open cover $\mathcal{U}$ and any infinite subset $H$ of $G$,
			there exists an infinite subset $S \subset H$ such that
			$h_{\mathrm{top}}^{S, \mathbf{F}}(G, \mathcal{U})>0$.
		\end{enumerate}
	\end{thm}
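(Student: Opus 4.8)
The plan is to mirror the proofs of the implications (1)$\Rightarrow$(2) and (1)$\Rightarrow$(3) of Theorem \ref{thm:4-2}, replacing the weak-mixing selection mechanism (Theorem \ref{thm:4-1}(2), which forced the hypothesis that $G$ be Abelian) by the cofiniteness of the sets $N(U,V)$ that comes from strong mixing. The decisive gain is that a \emph{cofinite} set meets \emph{every} infinite subset $H$ of $G$ in an infinite set, whereas a merely infinite set need not; this is exactly what lets us keep the selected sequence $S$ inside the prescribed $H$ and, simultaneously, dispense with commutativity. Once a suitable $S\subset H$ is produced, the counting and the limsup computation are verbatim those of Theorem \ref{thm:4-2}.

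For part (1), given an admissible cover $\mathcal{U}=\{U_1,\dots,U_l\}$, I would set $W_i=\mathrm{int}(U_i\setminus\bigcup_{j\neq i}U_j)$, obtaining pairwise disjoint nonempty open sets, and build by induction a sequence $S=\{g_n\}_{n=1}^\infty\subset H$ of distinct elements together with an increasing sequence $\{m_n\}$ with $g_n\in F_{m_n+1}\setminus F_{m_n}$, so that $\bigcap_{i=1}^n g_i^{-1}W_{s(i)}\neq\emptyset$ for every $s\in\{1,\dots,l\}^n$. In the inductive step each set $\bigcap_{i=1}^k g_i^{-1}W_{s(i)}$ is open and nonempty by hypothesis, so strong mixing makes
$$N\Big(\bigcap_{i=1}^k g_i^{-1}W_{s(i)},\,W_j\Big)$$
cofinite for every $s$ and every $j$; the finite intersection of these cofinite sets is again cofinite, hence meets $H$ in an infinite set, from which I choose $g_{k+1}$ distinct from $g_1,\dots,g_k$ and lying outside $F_{m_k+1}$ (defining $m_{k+1}$ accordingly, so that $m_{k+1}>m_k$). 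Since each $x_s\in\bigcap_{i=1}^n g_i^{-1}W_{s(i)}$ forces $g_ix_s\in U_{s(i)}\setminus\bigcup_{j\neq s(i)}U_j$, the only member of $\bigvee_{i=1}^n g_i^{-1}\mathcal{U}$ containing $x_s$ is $\bigcap_{i=1}^n g_i^{-1}U_{s(i)}$, whence $\mathcal{N}(\bigvee_{i=1}^n g_i^{-1}\mathcal{U})\geq l^n$. As $F_{m_{n+1}}\cap S=\{g_1,\dots,g_n\}$, evaluating the limsup along $\{m_{n+1}\}$ gives $h_{\mathrm{top}}^{S,\mathbf{F}}(G,\mathcal{U})\geq\log l$, and the reverse inequality is automatic, so equality holds.

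For part (2), I would follow (1)$\Rightarrow$(3) of Theorem \ref{thm:4-2}: from a non-trivial cover $\mathcal{V}$ pick $x_j\in\mathrm{int}(X\setminus V_j)$, pass to the distinct values $y_1,\dots,y_l$ (with $l\geq2$), take pairwise disjoint closed neighborhoods $W_i$ of $y_i$, and let $\mathcal{U}=\{X\setminus W_1,\dots,X\setminus W_l\}$, which is coarser than $\mathcal{V}$. The very same strong-mixing induction produces $S\subset H$ with $\bigcap_{i=1}^n g_i^{-1}W_{s(i)}\neq\emptyset$ for all $s\in\{1,\dots,l\}^n$, and the counting of Theorem \ref{thm:4-2} shows that each element of a minimal subcover of $\bigvee_{i=1}^n g_i^{-1}\mathcal{U}$ covers at most $(l-1)^n$ of the patterns $s$, yielding $\mathcal{N}(\bigvee_{i=1}^n g_i^{-1}\mathcal{U})\geq l^n/(l-1)^n$ and hence $h_{\mathrm{top}}^{S,\mathbf{F}}(G,\mathcal{U})\geq\log\frac{l}{l-1}>0$; monotonicity under refinement then gives $h_{\mathrm{top}}^{S,\mathbf{F}}(G,\mathcal{V})\geq h_{\mathrm{top}}^{S,\mathbf{F}}(G,\mathcal{U})>0$.

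The only genuinely new point --- and the step to watch --- is the compatibility of the three demands on $g_{k+1}$ in the induction: it must lie in the mixing set, inside $H$, and far out in the F\o lner filtration (so that $|F_{m_{n+1}}\cap S|=n$). Cofiniteness resolves this cleanly, since after removing from $H$ the finite complement of the mixing set, the finitely many previously chosen $g_i$, and any finite $F_m$, there remain infinitely many candidates. This is precisely the place where strong mixing (rather than weak mixing) and the passage to an arbitrary infinite $H$ are used, and it is what makes the argument go through with no assumption of commutativity on $G$.
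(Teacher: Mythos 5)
Your proposal is correct and follows essentially the same route as the paper: both run the inductive construction from Theorem \ref{thm:4-2} verbatim, using the cofiniteness of the sets $N(U,V)$ (and the fact that a finite intersection of cofinite sets still meets any infinite $H$ in an infinite set) to select $S\subset H$ without any commutativity assumption, and then reuse the counting and limsup computations for parts (1) and (2). No gaps.
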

	
	\begin{proof}
		(1). Assume that $(X, G)$ is strongly mixing
		and let $\mathcal{U}=\{U_1, U_2, \ldots, U_l\}$ be an admissible open
		cover. Let $$W_i=\mathrm{int}\left(U_i \setminus \bigcup_{j \neq
			i}U_j\right) \mbox{\ \ for each\ } i=1, 2, \ldots, l.$$ Then $W_1,
		W_2, \ldots, W_l$ are pairwise disjoint nonempty open sets of $X$.
		
		\medskip
		
		Now we can prove the following claim.
		
		{\it Claim}. There exists a sequence $S=\{g_n\}_{n=1}^\infty$ of distinct elements of $H$  and an increasing
		sequence $\{m_n\}_{n=1}^{\infty}$ of positive integers such that for each $n \geq 1$,
		$g_n\in F_{m_{n}+1}\setminus F_{m_n}$ and, for any $s \in \{1, 2, \ldots, l\}^{n}$,
		$$\bigcap_{i=1}^ng_i^{-1}W_{s(i)} \neq \emptyset.$$
		
		\begin{proof}[Proof of Claim]
			We use induction on $n$. It is obvious that
			the claim holds for $n=1$. Assume that the claim holds for $n=k$,
			next we want to show that the claim also holds for $n=k+1$. By the
			assumption, there exist distinct elements $g_1, g_2, \ldots,
			g_k$ of $H$ and positive integers $m_1<m_2<\ldots<m_k$ such that  for each $i\in\{1,2,\ldots,k\}$,
			$g_i\in F_{m_{i}+1}\setminus F_{m_i}$ and
			$$\bigcap_{i=1}^n g_i^{-1}W_{s(i)} \neq
			\emptyset \mbox{\ \ for all\ } s \in \{1, 2, \ldots,
			l\}^n.$$
			Let  $m_{k+1}$ be large enough, so that $g_1,\ldots,g_k
			\in F_{m_{k+1}}$. Since $N(U,V)$ is cofinite for any nonempty open sets $U,V$ of $X$, it follows that
			$$\bigcap_{s \in \{1, 2, \ldots, l\}^{n}} \bigcap_{j=1}^l
			N\left(\bigcap_{ i=1}^ng_i^{-1}W_{s(i)}, W_j\right)$$ is also cofinite, and hence
			$$\bigcap_{s \in \{1, 2, \ldots, l\}^{n}} \bigcap_{j=1}^l
			N\left(\bigcap_{ i=1}^ng_i^{-1}W_{s(i)}, W_j\right)\cap H$$ is
			an infinite subset of $G$. Thus, there exists $g_{k+1} \in
			H\setminus F_{m_{k+1}}$ with $g_{k+1}\neq g_i$ for each $i=1,2,\ldots,k$ such that
			$$g_{k+1} \in   \bigcap_{s \in \{1, 2, \ldots, l\}^{n}} \bigcap_{j=1}^l
			N\left(\bigcap_{ i=1}^ng_i^{-1}W_{s(i)}, W_j\right).$$ Through this iterative process,
			we obtain the sequence $S=\{g_n\}_{n=1}^\infty$ and an increasing sequence
			$\{m_n\}_{n=1}^\infty$ of positive integers satisfying the claim.
		\end{proof}
		By the above claim, we have
		\begin{eqnarray*}
			h_{\mathrm{top}}^{S, \mathbf{F}}(G, \mathcal{U}) &=& \limsup_{n
				\rightarrow \infty} \frac{1}{|F_n \cap S|} \log \mathcal{N}\left(\bigvee_{g
				\in F_n \cap S} g^{-1}\mathcal{U}\right)\\\\
			&\geqslant& \limsup_{n \rightarrow \infty}\frac{1}{|F_{m_{n+1}} \cap
				S|} \log \mathcal{N}\left(\bigvee_{g \in F_{m_{n+1}} \cap S}
			g^{-1}\mathcal{U}\right)\\\\
			&=& \limsup_{n \rightarrow \infty}\frac{1}{|C_n|} \log
			\mathcal{N}\left(\bigvee_{g \in C_n} g^{-1}\mathcal{U}\right)\\\\
			&=& \limsup_{n \rightarrow \infty}\frac{1}{n} \log l^n=\log l=\log
			\mathcal{N}(\mathcal{U}).
		\end{eqnarray*}
		Since $h_{\mathrm{top}}^{S, \mathbf{F}}(G, \mathcal{U}) \leqslant
		\log \mathcal{N}(\mathcal{U})$, we have $h_{\mathrm{top}}^{S, \mathbf{F}}(G,
		\mathcal{U})=\log \mathcal{N}(\mathcal{U})$.
		
		\medskip
		
		Statement (2) is proved by the same argument as that of (1) $\Rightarrow$ (3) in Theorem \ref{thm:4-2}.
	\end{proof}
	\begin{rem} \label{rem:4-4}
		The converse of Theorem \ref{thm:4-3} is not true even if
		$G=\mathbb{Z}$ (see for example \cite{Bla92,HY04})
	\end{rem}
	
	\subsection{Mild mixing}In this section, we investigate mild mixing via sequence entropy. Let us begin with some notations and definitions.
	Given a $G$-system $(X,G)$, a point $x\in X$ is called
	
	\begin{enumerate}
		\item a\emph{ transitive point} of $(X,G)$ if $\overline{\{gx:g\in G\}}=X$,
		and denote by $\mathrm{Trans}(X,G)$ the set of all transitive points;
		
		\medskip
		
		\item a \emph{recurrent point} of $(X,G)$ if for any neighborhood $U$ of $x$,
		$\{g\in G:gx\in U\}$ is an infinite set, and denote by $\mathrm{Rec}(X,G)$ the set of all recurrent points.
	\end{enumerate}
	A $G$-system $(X,G)$ is said to be {\it transitive recurrent} if $\mathrm{Trans}(X,G)\cap \mathrm{Rec}(X,G)\neq \emptyset$.
	
	\begin{prop}\label{prop:4-5}
		A $G$-system $(X, G)$ is transitive recurrent if and only if it
		is infinite transitive, i.e., $N(U, V)$ is infinite for every
		two nonempty open subsets $U$ and $V$ of $X$.
	\end{prop}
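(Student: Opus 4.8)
The plan is to prove the two implications separately, with the nontrivial direction (infinite transitive $\Rightarrow$ transitive recurrent) resting on a Baire category argument. I would open by recording the elementary but essential observation that, since $G$ is a \emph{group} acting continuously, every $g\in G$ is a homeomorphism of $X$ (its continuous inverse being the action of $g^{-1}$); this is exactly what lets me transport an open neighborhood of $x_0$ to an open neighborhood of any $gx_0$, and conversely.

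For the forward direction, suppose $x_0\in\mathrm{Trans}(X,G)\cap\mathrm{Rec}(X,G)$ and fix nonempty open sets $U,V$. The key step is the claim that the orbit of $x_0$ meets \emph{every} nonempty open set infinitely often. To see this for $V$, transitivity provides some $q$ with $qx_0\in V$; I shrink to an open $V'$ with $qx_0\in V'\subseteq V$, and since $q^{-1}V'$ is an open neighborhood of $x_0$, the recurrence of $x_0$ forces $\{g\in G: gx_0\in q^{-1}V'\}$ to be infinite. Translating, $\{h\in G: hx_0\in V\}\supseteq q\{g: gx_0\in q^{-1}V'\}$ is infinite. Now pick $p$ with $px_0\in U$; for every $h$ with $hx_0\in V$ the element $g=hp^{-1}$ satisfies $g(px_0)=hx_0\in V$ with $px_0\in U$, so $g\in N(U,V)$. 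Since $h\mapsto hp^{-1}$ is injective, $N(U,V)$ is infinite.

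For the converse I would show that $\mathrm{Trans}(X,G)\cap\mathrm{Rec}(X,G)$ is a dense $G_\delta$ subset of the compact metric (hence Baire) space $X$, so that it is nonempty. Fix a countable base $\{V_n\}$ of nonempty open sets and enumerate $G=\{\gamma_1,\gamma_2,\dots\}$. Infinite transitivity implies ordinary transitivity, and the standard argument gives $\mathrm{Trans}(X,G)=\bigcap_n\bigcup_{g\in G}g^{-1}V_n$ as a dense $G_\delta$ (each $\bigcup_g g^{-1}V_n$ is open, and dense because $N(U,V_n)\neq\emptyset$ for all nonempty open $U$). For recurrence the technical step is to realize $\mathrm{Rec}(X,G)$ as a $G_\delta$: set $A_{n,k}=\bigcup_{g\notin\{\gamma_1,\dots,\gamma_k\}}\{x\in X: d(gx,x)<1/n\}$, each of which is open since $x\mapsto d(gx,x)$ is continuous. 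I would then verify that $x$ is recurrent if and only if $x\in\bigcap_{n,k}A_{n,k}$, the double index $n,k$ being what encodes the \emph{infiniteness} of the return times. Each $A_{n,k}$ is dense: given nonempty open $W$, shrink it to an open set of diameter $<1/n$ and use that $N(W,W)$ is infinite to select $g\notin\{\gamma_1,\dots,\gamma_k\}$ and $x\in W$ with $gx\in W$, whence $d(gx,x)<1/n$. By Baire, $\mathrm{Rec}(X,G)$ is a dense $G_\delta$, and intersecting with $\mathrm{Trans}(X,G)$ yields a nonempty set, giving transitive recurrence.

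I expect the main obstacle to be two bookkeeping points rather than any deep difficulty: first, correctly encoding recurrence (infinitely many returns) as the countable intersection $\bigcap_{n,k}A_{n,k}$ and checking both inclusions; and second, the homeomorphism observation that makes the recurrence-transfer claim in the forward direction work. Once these are in place, the Baire category theorem closes the converse and the forward direction reduces to the short translation computation above.
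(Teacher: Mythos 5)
Your proof is correct. The forward direction is essentially the paper's: you show $N(x_0,V)$ is infinite for every nonempty open $V$ by combining recurrence with a translation by some $q\in N(x_0,V)$, and then observe $N(U,V)\supseteq N(x_0,V)\,p^{-1}$ for any $p\in N(x_0,U)$; the paper compresses this to the single line $N(U,V)=N(x,V)N(x,U)^{-1}$. The converse is where you genuinely diverge. The paper argues that infinite transitivity forces $N(x,U)$ to be infinite for \emph{every} transitive point $x$ and every nonempty open $U$, so that $\emptyset\neq\mathrm{Trans}(X,G)\subset\mathrm{Rec}(X,G)$; this is short but leaves two things implicit --- that $\mathrm{Trans}(X,G)$ is nonempty (itself a Baire category fact for a countable group acting on a compact metric space) and the small argument needed to upgrade $N(x,U)\neq\emptyset$ to $N(x,U)$ infinite. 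You instead realize $\mathrm{Rec}(X,G)$ directly as the dense $G_\delta$ set $\bigcap_{n,k}A_{n,k}$, with the index $k$ (ranging over initial segments of an enumeration of $G$) encoding infiniteness of return times, and intersect it with the dense $G_\delta$ set $\mathrm{Trans}(X,G)$. Your route is longer but fully self-contained and makes explicit exactly where infinite transitivity (as opposed to mere transitivity) is used, namely in the density of each $A_{n,k}$; the paper's route is slicker and isolates the cleaner structural statement that, under infinite transitivity, every transitive point is automatically recurrent. Both are valid; no gap in yours.
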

	
	\begin{proof}
		Assume that $(X, G)$ is transitive recurrent and $x \in \mathrm{Trans}(X,G)\cap
		\mathrm{Rec}(X,G)$. Then $N(x, U)$ is infinite for every nonempty open
		subset $U$ of $X$, where $N(x, U)=\{g \in G: gx \in U\}$. Thus,
		for every pair nonempty open subsets $U$ and $V$ of $X$,
		$N(U, V)=N(x, V)N(x, U)^{-1}$ is infinite.
		
		\medskip
		
		Conversely, suppose that $(X, G)$ is infinite transitive, that is, $N(U, V)$ is infinite for every
		two nonempty open subsets $U$ and $V$ of $X$. Then $N(x, U)$ is
		infinite for every $x \in \mathrm{Trans}(X, G)$ and every
		nonempty open subsets $U$ of $X$. Thus, $\emptyset \neq \mathrm{Trans}(X, G) \subset \mathrm{Rec}(X,
		G)$, which implies that $(X, G)$ is transitive recurrent.
	\end{proof}
	
	\medskip
	
	A $G$-system $(X, G)$ is called {\it mildly mixing} if $(X \times Y, G)$ is transitive recurrent for any
	transitive recurrent system $(Y, G)$. From Proposition \ref{prop:4-5} and the definition, it is easy to check that
	\begin{equation}\label{eq:mild to weak}
		\mbox{strong mixing} \Rightarrow \mbox{mild mixing} \Rightarrow \mbox{weak mixing}.
	\end{equation}
	
	\begin{rem}\label{rem:review} (1) Recall that for a $\mathbb{Z}_+$-system $(X,T)$, the transitivity is
		defined by for any nonempty open subsets $U$ and $V$,
		$\{n\in\mathbb{N}: U\cap T^{-n}V\neq \emptyset\}$ is nonempty. It
		well known that if $(X, T)$ is transitive, then $\mathrm{Trans}(X,
		T)$ is a dense $G_{\delta}$-set of $X$, and $\mathrm{Trans}(X, T)
		\subset \mathrm{Rec}(X, T)$. Clearly, if $\mathrm{Trans}(X,T)\cap \mathrm{Rec}(X,T) \neq \emptyset$
		then $(X,T)$ is transitive. Thus, for $\mathbb{Z}_+$-actions,
		transitivity is equivalent to transitive recurrent.
		
		\medskip
		
		(2) For a $\mathbb{Z}_+$-system $(X, T)$, it is called mild mixing
		if $(X \times Y, T \times S)$ is transitive for any transitive
		system $(Y, S)$ (see \cite{Gla04} or \cite{HY04}). However, for
		group actions, the reviewer pointed out to us the following claim:
		
		\medskip
		
		{\it Claim}.\ The only topologically mildly mixing system (according
		to the definition of mild mixing for $\mathbb{Z}_+$-actions in
		\cite{Gla04} or \cite{HY04}) is the trivial system.
		
		\begin{proof}[Proof of the Claim.]
			Let $G$ be any nontrivial countable group and let $X$ be the
			one-point compactification of $G$ viewed as a discrete set (if $G$
			is finite then we let $X=G$). If we agree that $g \cdot
			\infty=\infty$ for any $g \in G$, then $G$ acts on $X$ by
			homeomorphisms, via left multiplication. Clearly, $(X, G)$ is
			transitive, with any $x \neq \infty$\ (i.e., $x \in G$) being a
			transitive point.
			
			\medskip
			
			Now, let $(Y, G)$ be any $G$-action, where $Y$ has at least two
			different points. Suppose that $(Y, G)$ is topologically mildly
			mixing. Then the product $X \times Y$ should be transitive under the
			product action of $G$. Let $\left(x_0, y_0\right)$ be a transitive
			point in $X \times Y$. Clearly, we have $x_0 \neq \infty$, i.e.,
			$x_0=g_0 \in G$ and there exists $y_1 \neq y_0$ in $Y$. By
			transitivity, exists a sequence $\left\{g_n\right\}_{n \geq 1}$ in
			$G$ such that $g_n\left(x_0, y_0\right)=\left(g_n g_0,
			g_ny_0\right)$ converges to $(g_0, y_1)$. In particular, $g_n g_0$
			converges to $g_0$. Since $G$ is discrete, it must be that $g_n
			=e_G$ (the unit of $G$) for large enough $n$. But then
			$g_n\left(x_0, y_0\right)=\left(x_0, y_0\right)$ which does not
			converge to $\left(x_0, y_1\right)$, and we have a contradiction.
		\end{proof}
	\end{rem}

	Denote by $\mathcal{P}_f(\mathbb{N})$ the set of all finite nonempty subsets
	of $\mathbb{N}$. Let $G$ be a group and $\{p_i\}_{i=1}^{\infty}$ be a
	sequence in $G$. Define
	$$FP\left(\{p_i\}_{i=1}^{\infty}\right)=\left\{\prod_{i \in F}p_i: F \in
	\mathcal{P}_f(\mathbb{N})\right\}, \mbox{\ where\ }\prod_{i \in
		F}p_i=p_{n_1} \cdot p_{n_2} \cdot \ldots \cdot p_{n_k}$$ for
	$F=\{n_1, n_2, \ldots, n_k\} \in \mathcal{P}_f(\mathbb{N})$ with
	$n_1<n_2<\cdots<n_k$.
	For each $n \in \mathbb{N}$, the {\it initial $n$-segment} of
	$FP(\{p_i\}_{i=1}^{\infty})$ is defined as
	$$FP(\{p_i\}_{i=1}^n)=\left\{\prod_{i \in F}p_i: F \in
	\mathcal{P}_f(\{1, 2, \ldots, n\})\right\}.$$ A subset $S$ of $G$ is
	called an {\it infinite $IP$-set} if there exists a sequence
	$\{p_i\}_{i=1}^{\infty}$ in $G$ such that
	$FP(\{p_i\}_{i=1}^{\infty})$ is infinite and
	$FP(\{p_i\}_{i=1}^{\infty}) \subseteq S$.
	
	\medskip
	
	To study the mild mixing by sequence entropy, we need the following version of Furstenberg's realization theorem of $IP$-sets,
	which was generalized from the additive semigroup of positive integers (see \cite[Theorem 2.17]{Fur81}) to discrete groups (see \cite[Theorem 2.8]{DLX}).
	\begin{lem}\label{lem:Furstenberg's realization theorem of IP-sets}
		Let $G$ be an infinite discrete group and $A$ be an infinite IP-set of $G$. Then there exists a $G$-system $(X,G)$, $x\in \operatorname{Rec}(X,G)$, and a neighborhood $U$ of $x$ such that
		\[\{g\in G:gx\in U,g\neq e\}\subset A.\]
	\end{lem}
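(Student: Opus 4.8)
The plan is to realize $A$ symbolically as the set of return times of a single recurrent point in a subshift, exploiting the self-similarity of the finite-product structure. Fix a sequence $\{p_i\}_{i=1}^\infty$ in $G$ with $FP(\{p_i\}_{i=1}^\infty)$ infinite and contained in $A$ (passing first to a thinned subsequence, as explained below). Work in the compact space $\Omega=\{0,1\}^G$ with the continuous left $G$-action $(g\eta)(h)=\eta(hg)$, and let $\omega$ be the indicator of $S:=FP(\{p_i\}_{i=1}^\infty)\cup\{e\}$. Take $X$ to be the orbit closure $\overline{\{g\omega:g\in G\}}$ and let $U=\{\eta\in X:\eta(e)=1\}$, a clopen neighborhood. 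With this action one checks immediately that for any $\eta$ the return-time set $\{g\in G:g\eta\in U\}$ equals $\operatorname{supp}(\eta)$; in particular $\{g:g\omega\in U\}=S$, so once we know $\omega\in\Rec(X,G)$ the conclusion $\{g:g\omega\in U,\ g\neq e\}=FP(\{p_i\}_{i=1}^\infty)\subseteq A$ follows with $x=\omega$.

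The heart of the matter is therefore to show that $\omega$ is recurrent, and for this I would use the \emph{tail self-similarity} of $S$. Writing $p_\alpha=p_{n_1}\cdots p_{n_k}$ for $\alpha=\{n_1<\dots<n_k\}\in\mathcal P_f(\mathbb N)$, the decisive feature is that if $\beta$ lies entirely below $\alpha$ (i.e. $\max\beta<\min\alpha$) then $p_\beta p_\alpha=p_{\beta\cup\alpha}\in S$. Hence, for a fixed finite window $F\subset G$, any coordinate $t\in F$ of the form $t=p_\beta$ (or $t=e$) satisfies $(p_\alpha\omega)(t)=\omega(tp_\alpha)=1=\omega(t)$ as soon as $\min\alpha$ exceeds all the indices occurring in the finitely many such $t$. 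Provided we also know that the remaining (``$0$'') coordinates are preserved, that is, that $t\notin S$ together with $\min\alpha$ large forces $tp_\alpha\notin S$, we get $p_\alpha\omega\to\omega$ along the net $\min\alpha\to\infty$. Since there are then infinitely many such $p_\alpha$, every neighborhood of $\omega$ is entered infinitely often, so $\omega\in\Rec(X,G)$.

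The step that genuinely requires the group hypothesis, and which I expect to be the main obstacle, is guaranteeing the preservation of the $0$-coordinates, i.e. that right multiplication of a fixed $t\notin S$ by a ``high'' product $p_\alpha$ cannot land inside $S$. Over $\mathbb Z$ this is automatic once the generators grow fast enough, because then every element of $FP(\{p_i\}_{i=1}^\infty)$ has a \emph{unique} representation; over an arbitrary discrete group no growth condition is available and relations may collapse products. I would handle this by first \emph{thinning} the generating sequence: replace $\{p_i\}$ by the ordered block products $p_{\alpha_1},p_{\alpha_2},\dots$ over a sequence of blocks $\alpha_1<\alpha_2<\cdots$ (and rename), chosen, via a greedy construction organized by Hindman's theorem and an idempotent ultrafilter $u=u\cdot u$ with $S\in u$, so that the new $FP$-set is still infinite, still contained in $A$, and enjoys a sufficiently rigid representation property: whenever $t\,p_\gamma\in FP(\{p_i\}_{i=1}^\infty)$ with $\min\gamma$ larger than the indices of $t$, then $t\in FP(\{p_i\}_{i=1}^\infty)\cup\{e\}$ (in particular $e\notin FP(\{p_i\}_{i=1}^\infty)$). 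This rigidity is exactly what closes the $0$-coordinate gap above, and it is the place where the passage from $\mathbb Z$ to general $G$ must be argued carefully; the idempotent-ultrafilter formalism is the natural tool, since the relation $u=u\cdot u$ furnishes both the recurrence of the limiting point and the self-reproducing subsets of $S$ needed to carry out the thinning.
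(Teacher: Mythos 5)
First, a point of comparison: the paper does not prove this lemma at all; it is quoted from Dai--Liang--Xiao \cite[Theorem 2.8]{DLX}. So there is no internal proof to match your argument against, and I assess it on its own terms.

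Your setup is the standard (and surely the intended) one: realize the return-time set as $\operatorname{supp}(\omega)$ for $\omega=1_{FP(\{p_i\})\cup\{e\}}\in\{0,1\}^G$ with the action $(g\eta)(h)=\eta(hg)$, take the cylinder $U=\{\eta:\eta(e)=1\}$, and reduce everything to the recurrence of $\omega$. You also correctly isolate the crux: for $t$ in a fixed finite window with $\omega(t)=0$ one must show $\omega(tp_\alpha)=0$ for all $\alpha$ with $\min\alpha$ large. The problem is that this step --- which is the entire content of the passage from $\mathbb{Z}$ to a general discrete group --- is not proved; it is replaced by an appeal to a ``rigidity property after thinning'' that is neither precisely formulated nor established. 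As stated the property does not even parse: you ask that $\min\gamma$ exceed ``the indices occurring in $t$'', but $t$ ranges over an arbitrary finite subset of $G$, and a generic $t\notin FP(\{p_i\})\cup\{e\}$ has no indices. What is actually needed is: for every $t\in G\setminus(FP(\{p_i\})\cup\{e\})$ there is $M(t)$ with $tp_\alpha\notin FP(\{p_i\})\cup\{e\}$ whenever $\min\alpha>M(t)$. Over $\mathbb{Z}$ this follows by thinning to a lacunary block subsequence (unique factorization of finite sums plus a gap estimate), but in a general group there is no growth to exploit, and Hindman's theorem and idempotent ultrafilters only manufacture sub-$FP$-sets inside $A$; they control membership in the support (the $1$-coordinates), not non-membership (the $0$-coordinates), which is exactly the half you are missing. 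It is not even clear that the full $FP$-set of some thinned sequence ever enjoys your rigidity property in an arbitrary group.

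A workable repair is to give up on the support being a full $FP$-set. Enumerate $G=\{t_1=e,t_2,\dots\}$, put $F_k=\{t_1,\dots,t_k\}$, and build finite sets $\Gamma_0=\{e\}\subseteq\Gamma_1\subseteq\cdots\subseteq FP(\{p_i\})\cup\{e\}$ together with distinct elements $g_k=p_{\alpha_k}$, where $\min\alpha_k$ exceeds all indices used to represent $\Gamma_{k-1}$, by setting $\Gamma_k=\Gamma_{k-1}\cup\Gamma_{k-1}g_k$; this keeps $\Gamma_{k-1}g_k\subseteq FP(\{p_i\})$ automatically and settles the $1$-coordinates. The $0$-coordinates are then handled by choosing each $g_k$ to avoid a \emph{finite} list of forbidden values (so that no element of the form $tg_j$ with $t\in F_j\setminus\Gamma_j$, $j\le k$, is ever added to $\Gamma=\bigcup_k\Gamma_k$, and so that $\Gamma\cap F_j$ stabilizes at stage $j$); such choices exist because $FP(\{p_i\}_{i>m})$ is an infinite set for every $m$. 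Then $x=1_\Gamma$ satisfies $\Gamma g_j^{-1}\cap F_k=\Gamma\cap F_k$ for all $j\ge k$, hence $x\in\operatorname{Rec}(X,G)$, while $\operatorname{supp}(x)\setminus\{e\}\subseteq FP(\{p_i\})\subseteq A$. Some bookkeeping of this kind (or an honest proof of your rigidity claim) is indispensable; as written, your argument establishes the lemma only modulo its hardest step.
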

	
	With the help of Lemma \ref{lem:Furstenberg's realization theorem of IP-sets}, we have the following result
	to describe the set $N(U,V)$ for any nonempty open sets $U,V$ of $X$.
	
	\begin{lem} \label{lem:4-6}
		Let $(X, G)$ be a $G$-system. If $(X, G)$ is mildly mixing, then
		$N(U, V) \cap AA^{-1}$ is infinite for every pair of nonempty open
		subsets $U, V$ of $X$ and every infinite $IP$-set $A$ of $G$.
	\end{lem}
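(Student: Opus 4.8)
The plan is to realize the $IP$-set $A$ as a set of return times of a recurrent point, pass to an orbit closure to manufacture a transitive recurrent auxiliary system, feed that system into the definition of mild mixing, and then convert the resulting product-recurrence into membership in $AA^{-1}$ by a return-time difference trick. First I would apply Lemma \ref{lem:Furstenberg's realization theorem of IP-sets} to the infinite $IP$-set $A$ to obtain a $G$-system $(Z,G)$, a recurrent point $z \in \mathrm{Rec}(Z,G)$, and a neighborhood $W$ of $z$ such that, setting $R := \{g \in G : gz \in W\}$, one has
$$R \setminus \{e_G\} \subseteq A.$$
Then I would put $Y = \overline{\{gz : g \in G\}}$ and restrict the action to this orbit closure. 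In $(Y,G)$ the point $z$ is transitive by construction, and it stays recurrent because every neighborhood of $z$ in $Y$ is the trace of a neighborhood in $Z$; hence $(Y,G)$ is transitive recurrent. Writing $W_Y := W \cap Y$, we have $z \in W_Y$, so $W_Y$ is a nonempty open subset of $Y$.

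Next, since $(X,G)$ is mildly mixing and $(Y,G)$ is transitive recurrent, the product $(X \times Y, G)$ is transitive recurrent, hence infinite transitive by Proposition \ref{prop:4-5}. Applying infinite transitivity to the nonempty open sets $U \times W_Y$ and $V \times W_Y$ gives that
$$N(U \times W_Y,\, V \times W_Y) = N(U,V) \cap N(W_Y, W_Y)$$
is an infinite subset of $G$, where the equality is the usual product formula for $N(\cdot,\cdot)$. I claim that this entire infinite set is already contained in $N(U,V) \cap AA^{-1}$, which immediately yields the lemma.

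The main step, and the only place where care is needed, is upgrading the product recurrence into the algebraic condition $g \in AA^{-1}$. Fix $g \in N(U \times W_Y, V \times W_Y)$; then $O_g := W_Y \cap g^{-1}W_Y$ is a nonempty open subset of $Y$. The decisive observation is that a transitive recurrent point meets every nonempty open set infinitely often: choosing $c$ with $cz \in O_g$, the set $c^{-1}O_g$ is a neighborhood of $z$, so recurrence of $z$ makes $\{h : hz \in c^{-1}O_g\}$ infinite, and left-translating by $c$ (which is injective) shows that $N(z,O_g) = \{a : az \in O_g\}$ is infinite; note this uses only injectivity of left multiplication, so no commutativity of $G$ is required. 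Now pick $a \in N(z,O_g)$ with $a \notin \{e_G, g^{-1}\}$. Then $az \in W_Y$ and $g(az)=(ga)z \in W_Y$, i.e. $a \in R$ and $ga \in R$ with $a \neq e_G$ and $ga \neq e_G$; hence $a, ga \in A$ and $g = (ga)a^{-1} \in AA^{-1}$. Since also $g \in N(U,V)$, we conclude $N(U \times W_Y, V \times W_Y) \subseteq N(U,V) \cap AA^{-1}$, so $N(U,V) \cap AA^{-1}$ is infinite.

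I expect the realization lemma plus the orbit-closure construction to be routine; the genuine obstacle is the final conversion, specifically verifying that $N(z,O_g)$ is infinite so that the two degenerate choices $a=e_G$ and $a=g^{-1}$ can be avoided, which is exactly what guarantees that the witnessing returns land in $A$ rather than merely in $A \cup \{e_G\}$.
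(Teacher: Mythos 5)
Your proposal is correct and follows essentially the same route as the paper's own proof: realize $A$ via Lemma \ref{lem:Furstenberg's realization theorem of IP-sets}, pass to the orbit closure to get a transitive recurrent system, apply mild mixing to the product to see that $N(U,V)\cap N(W,W)$ is infinite, and then show $N(W,W)\subset AA^{-1}$ by picking a return time $a\notin\{e_G,g^{-1}\}$ into $W\cap g^{-1}W$. Your extra justification that $N(z,O_g)$ is infinite (via the translation trick) is a detail the paper leaves implicit, but the argument is the same.
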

	\begin{proof}
		Fix a pair of nonempty open
		subsets $U, V$ of $X$ and an infinite $IP$-set $A$ of $G$.
		By Lemma \ref{lem:Furstenberg's realization theorem of IP-sets}, there exists a $G$-system
		$(Y,G)$, $y\in \operatorname{Rec}(Y,G)$, and an open neighborhood $W$ of $y$ such that
		\[\{g\in G:gy\in W,g\neq e\}\subset A.\]
		Let $Z=\overline{\{gy:g\in G\}}$ be a $G$-invariant closed subset of $Y$. Then
		\begin{equation}\label{eq2}
			y\in \operatorname{Rec}(Z,G)\cap \operatorname{Trans}(Z,G)
		\end{equation}
		and
		\begin{equation}\label{eq3}
			\{g\in G: gy\in W\cap Z,g\neq e\}=\{g\in G: gy\in W,g\neq e\}\subset A.
		\end{equation}
		By \eqref{eq2}, $(Z,G)$ is recurrent transitive, and hence $(X\times Z,G)$ is transitive recurrent, as $(X,G)$ is mildly mixing. Thus,
		\begin{equation}\label{eq4}
			N(U,V)\cap N(W,W)=N(U\times W, V\times W) \mbox{\ is infinite}.
		\end{equation}
		Now we prove that $N(W,W)\subset AA^{-1}$. Indeed, for any $g\in N(W,W)$, $W\cap g^{-1}W$ is a nonempty open subset.
		By \eqref{eq2}, there exists $h\in G$ with $h \neq e_G, g^{-1}$ such that $hy\in W\cap g^{-1} W$,
		that is, $gy, ghy\in W$. Since $h,gh\neq e_G$, it follows from \eqref{eq3} that $h,gh\in A$,
		which implies that $g=(gh)h^{-1}\in AA^{-1}$. By \eqref{eq4}, we have that $N(U,V)\cap AA^{-1}$ is infinite.
		This finishes the proof of the lemma.
	\end{proof}
	
	With the help of Lemma \ref{lem:4-6}, we obtain the following description of topologically mildly mixing systems.
	
	\begin{thm} \label{thm:4-7}
		Let $(X,G)$ be a $G$-system. If $(X, G)$ is mildly mixing, then the following properties
		hold:
		
		\begin{enumerate}
			\item for each admissible open cover $\mathcal{U}$ and each
			infinite $IP$-set $H$, there exists an infinite subset $S \subset H$
			such that $h_{\mathrm{top}}^{S, \mathbf{F}}(G, \mathcal{U})=\log
			N(\mathcal{U})$;

			\item for each non-trivial finite open cover $\mathcal{U}$ and each
			infinite $IP$-set $H$, there exists an infinite subset $S \subset H$
			such that $h_{\mathrm{top}}^{S, \mathbf{F}}(G, \mathcal{U})>0$.
		\end{enumerate}
	\end{thm}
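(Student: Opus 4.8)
The plan is to run the same architecture as the proof of Theorem~\ref{thm:4-3}, replacing the cofiniteness of the return sets $N(U,V)$ coming from strong mixing by the $IP$-information supplied by Lemma~\ref{lem:4-6}. For part~(1), fix an admissible cover $\mathcal{U}=\{U_1,\dots,U_l\}$ and an infinite $IP$-set $H$, and set $W_i=\mathrm{int}\big(U_i\setminus\bigcup_{j\neq i}U_j\big)$, so that $W_1,\dots,W_l$ are pairwise disjoint nonempty open sets. I would prove the following Claim by induction on $n$: there is a sequence $S=\{g_n\}_{n=1}^{\infty}$ of distinct elements of $H$ and an increasing sequence $\{m_n\}$ with $g_n\in F_{m_n+1}\setminus F_{m_n}$ such that for every $n$ and every $s\in\{1,\dots,l\}^{n}$ one has $\bigcap_{i=1}^{n}g_i^{-1}W_{s(i)}\neq\emptyset$. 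Writing $V_s^{(k)}=\bigcap_{i=1}^{k}g_i^{-1}W_{s(i)}$, the inductive step reduces exactly as in Theorem~\ref{thm:4-3} to finding a fresh element
$$g_{k+1}\in H\cap\bigcap_{s\in\{1,\dots,l\}^{k}}\ \bigcap_{j=1}^{l}N\big(V_s^{(k)},W_j\big)$$
lying outside $F_{m_{k+1}}$ and distinct from $g_1,\dots,g_k$.

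The essential new input is to feed this finite intersection of return sets from Lemma~\ref{lem:4-6}, and here two points must be handled. First, simultaneity over the $l^{k+1}$ pairs $(s,j)$: I would upgrade Lemma~\ref{lem:4-6} to the statement that, for a mildly mixing $(X,G)$, each return set $N(U,V)$ meets every infinite $IP$-set (that is, $N(U,V)$ is an $IP^{*}$-set), and then use that $IP^{*}$-sets are closed under finite intersections, so that the displayed intersection is again $IP^{*}$ and therefore meets $H$ in an infinite set; the closure under finite intersections is the filter property of $IP^{*}$-sets, which follows from the partition regularity of $IP$-sets (Hindman's theorem) in the $FP$-formulation already used in this section. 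Second, and this is the delicate part, one must land inside $H$ itself rather than inside $HH^{-1}$, which is all that Lemma~\ref{lem:4-6} literally delivers: in the realization furnished by Lemma~\ref{lem:Furstenberg's realization theorem of IP-sets}, the return times of the distinguished point $y\in\mathrm{Rec}(Z,G)\cap\mathrm{Trans}(Z,G)$ to its neighbourhood $W$ already lie in $H$, so by arranging the transitive--recurrent point of the product system to sit over $y$ one obtains return times genuinely in $H$, and infinitely many of them. Passing from ``$N(U,V)\cap HH^{-1}$ infinite'' to ``$N(U,V)\cap H$ infinite'' is exactly the obstacle I expect to cost the most work.

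Granting the Claim, the entropy estimate is then identical to Theorem~\ref{thm:4-3}: along $C_n=\{g_1,\dots,g_n\}$ the $l^{n}$ nonempty sets $V_s^{(n)}$ force $\mathcal{N}\big(\bigvee_{g\in C_n}g^{-1}\mathcal{U}\big)\geq l^{n}$, whence
$$h_{\mathrm{top}}^{S,\mathbf{F}}(G,\mathcal{U})\geq\limsup_{n\to\infty}\frac{1}{n}\log l^{n}=\log l=\log\mathcal{N}(\mathcal{U}),$$
and the reverse inequality $h_{\mathrm{top}}^{S,\mathbf{F}}(G,\mathcal{U})\leq\log\mathcal{N}(\mathcal{U})$ is automatic, giving~(1). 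For part~(2) I would argue exactly as in the implication (1)$\Rightarrow$(3) of Theorem~\ref{thm:4-2}: given a non-trivial finite cover $\mathcal{V}$, choose pairwise disjoint closed neighbourhoods $W_1,\dots,W_l$ (with $l\geq 2$) of suitably chosen points so that $\mathcal{U}=\{X\setminus W_1,\dots,X\setminus W_l\}$ is coarser than $\mathcal{V}$, run the same inductive Claim to obtain $S\subset H$, and invoke the counting bound from Theorem~\ref{thm:4-2} to conclude $h_{\mathrm{top}}^{S,\mathbf{F}}(G,\mathcal{V})\geq h_{\mathrm{top}}^{S,\mathbf{F}}(G,\mathcal{U})\geq\log\frac{l}{l-1}>0$.
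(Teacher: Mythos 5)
Your skeleton matches the paper's, and you have correctly located the crux: Lemma \ref{lem:4-6} only makes $N(U,V)\cap HH^{-1}$ infinite, while your nested induction needs a fresh element of $H$ itself. But your proposed resolution --- upgrading Lemma \ref{lem:4-6} to ``$N(U,V)$ is an $IP^*$-set'' --- is precisely the step you do not prove, and the sketch you offer for it has a gap. In the setting of Lemma \ref{lem:Furstenberg's realization theorem of IP-sets}, mild mixing gives you \emph{some} transitive recurrent point of $X\times Z$, but nothing lets you ``arrange'' that point to lie in the fiber $X\times\{y\}$ over the distinguished point $y$: the transitive recurrent points form a dense $G_\delta$ of the product, which need not meet a prescribed fiber, and the only return-time set you control is $N(y,W)\subset A\cup\{e\}$, not $N(z_0,W)$ for the actual second coordinate $z_0$ of the product's transitive recurrent point. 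That is exactly why the paper's Lemma \ref{lem:4-6} stops at the difference set $AA^{-1}$ (via $N(W,W)\subset AA^{-1}$), and why its proof of Theorem \ref{thm:4-7} never attempts to land in $H$ at the inductive step.

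The paper's device is combinatorial rather than dynamical: it abandons the nested sequence $g_1,g_2,\dots$ altogether. Writing $H=FP(\{p_i\}_{i=1}^{\infty})$ and keeping $C_k\subset FP(\{p_i\}_{i=1}^{m_k})$, it first uses weak mixing together with Theorem \ref{thm:4-1}(2) to collapse the finite intersection $\bigcap_{s,j}N(\bigcap_{g\in C_k}g^{-1}W_{s(g)},W_j)$ into a single return set $N(U,V)$, then applies Lemma \ref{lem:4-6} with $A=H_{m_k}=FP(\{p_i\}_{i=m_k+1}^{\infty})$ to find $g_1,g_2\in H_{m_k}$ with $g_1g_2^{-1}$ in that intersection, and finally \emph{replaces the whole configuration}: $C_{k+1}=C_kg_2\cup\{g_1\}$. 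The right translation by $g_2$ preserves nonemptiness of all the intersections (it merely translates them by $g_2^{-1}$), and $C_kg_2\subset H$ because the index sets of elements of $C_k$ and of $g_2$ are disjoint with those of $g_2$ larger, so the products are again elements of $FP(\{p_i\})$. The price is that the $C_n$ are no longer nested, which is why the paper must take $|C_{n_i}|=(i+1)!-i!$ so that the last block dominates in $\frac{1}{|S_i|}\log\mathcal{N}\bigl(\bigvee_{g\in C_{n_i}}g^{-1}\mathcal{U}\bigr)$; your ``identical to Theorem \ref{thm:4-3}'' estimate presupposes the nesting you cannot deliver. Unless you can genuinely prove the $IP^*$ strengthening of Lemma \ref{lem:4-6} (a strictly stronger statement than what the paper establishes, and one your fiber argument does not reach), your induction does not close.
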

	
	\begin{proof}
		Assume that $(X, G)$ is mildly mixing, let $\mathcal{U}=\{U_1, U_2,
		\ldots, U_l\}$ be an admissible open cover and let
		$H=FP(\{p_i\}_{i=1}^{\infty})$ be an $IP$-set. Let
		$$W_i=\mathrm{int}\left(U_i \setminus \bigcup_{j \neq i}U_j\right)
		\mbox{\ \ for each\ } i=1, 2, \ldots, l.$$ Then $W_1, W_2, \ldots,
		W_l$ are pairwise disjoint nonempty open sets of $X$.
		
		\medskip
		
		{\it Claim}. Fix a finite subset $F$ of $G$. For any $n \in
		\mathbb{N}$, there exists a subset $C_n=\{g_{1,n}, g_{2,n}, \ldots,
		g_{n,n}\} \subset H$ such that $C_n \cap F=\emptyset$ and
		$$\bigcap_{g \in C_n}g^{-1}W_{s(g)} \neq \emptyset \mbox{\ \ for any\ } s \in \{1, 2, \ldots, l\}^{C_n}.$$
		
		\begin{proof}[Proof of Claim]
			We use induction on $n$. It is obvious that
			the claim holds for $n=1$. Assume that the claim holds for $n=k$.
			Next we want to show that the claim also holds for $n=k+1$. By the
			assumption, there exists a finite subset $C_k=\{g_{1, k}, g_{2, k},
			\ldots, g_{k,k}\} \subset H$ such that $C_k \cap F=\emptyset$ and
			$$\bigcap_{g \in C_k}g^{-1}W_{s(g)} \neq \emptyset \mbox{\ \ for any\ } s \in \{1, 2, \ldots, l\}^{C_k}.$$
			
			Take $m_k \in \mathbb{N}$ large enough such that $C_k \subset
			FP(\{p_i\}_{i=1}^{m_k})$. Let
			$H_{m_k}=FP(\{p_i\}_{i=m_k+1}^{\infty})$.
			From \eqref{eq:mild to weak}, the system $(X,G)$ is weakly mixing, which together with Theorem \ref{thm:4-1}, implies that there exist two nonempty open sets $U,V$ with
			\[N(U,V)\subset \bigcap_{s \in \{1, 2, \ldots, l\}^{C_k}} \bigcap_{j=1}^l
			N\left(\bigcap_{g \in C_k}g^{-1}W_{s(g)}, W_j\right).\]
			Thus, by Lemma \ref{lem:4-6}, we
			have
			$$\left[\bigcap_{s \in \{1, 2, \ldots, l\}^{C_k}} \bigcap_{j=1}^l
			N\left(\bigcap_{g \in C_k}g^{-1}W_{s(g)}, W_j\right)\right] \cap
			H_{m_k}H_{m_k}^{-1}$$ is infinite, and then there exist $g_{1},
			g_{2} \in H_{m_{k}}$ such that
			$$g_{1}g_{2}^{-1} \in \bigcap_{s \in\{1,2, \ldots, l\}^{C_k}} \bigcap_{j=1}^{l} N\left(\bigcap_{i=1}^{k}
			(g_{i,k})^{-1} W_{s(g_{i,k})}, W_{j}\right), g_1g_2^{-1} \notin C_k
			\mbox{ and } g_{1},g_{i,k}g_2\notin F.$$ Let $g_{i,
				k+1}=g_{i,k}g_2$, $i=1, 2, \ldots, k$ and $g_{k+1, k+1}=g_1$. Then
			$$C_{k+1}=\{g_{1, k+1}, g_{2, k+1}, \ldots, g_{k+1, k+1}\} \subset H,$$
			$C_{k+1} \cap F =\emptyset$ and for any $s \in \{1, 2, \ldots,
			l\}^{C_k}$ and every $j=1, 2, \ldots, l$,
			$$\bigcap_{g \in C_k}g^{-1}W_{s(g)} \cap g_2g_1^{-1}W_j \neq \emptyset,$$
			and so
			$$\bigcap_{g \in C_{k+1}} g^{-1}W_{s(g)} \neq
			\emptyset \mbox{\ \ for all\ } s \in \{1, 2, \ldots,
			l\}^{C_{k+1}}.$$ This shows the claim holds for $n=k+1$ and so
			completes the proof of the claim.
		\end{proof}
		
		By claim, we can choose an increasing sequence
		$\{m_i\}_{i=1}^{\infty}$ of positive integers such that $C_{n_i}
		\subset F_{m_{i+1}}$, $C_{n_i} \cap F_{m_i}=\emptyset$ and
		$|C_{n_i}|=n_i=(i+1)!-i!$. Let $S=\bigcup_{i=1}^{\infty}C_{n_i}
		\subset H$ and $S_i=\bigcup_{j=1}^i C_{n_j}$. Then we have
		\begin{eqnarray*}
			h_{\mathrm{top}}^{S, \mathbf{F}}(G, \mathcal{U}) &=& \limsup_{n
				\rightarrow \infty} \frac{1}{|F_n \cap S|} \log \mathcal{N}\left(\bigvee_{g
				\in F_n \cap S} g^{-1}\mathcal{U}\right)\\
			&\geqslant& \limsup_{i \rightarrow \infty}\frac{1}{|F_{m_{i+1}} \cap
				S|} \log \mathcal{N}\left(\bigvee_{g \in F_{m_{i+1}} \cap S}
			g^{-1}\mathcal{U}\right)\\\\
			&=& \limsup_{i \rightarrow \infty}\frac{1}{|S_i|} \log
			\mathcal{N}\left(\bigvee_{g \in S_i} g^{-1}\mathcal{U}\right)\\
			&\geqslant& \limsup_{i \rightarrow
				\infty}\frac{1}{\displaystyle{\sum_{j=1}^i n_j}} \log
			\mathcal{N}\left(\bigvee_{g \in C_{n_i}} g^{-1}\mathcal{U}\right)\\
			&=& \limsup_{i \rightarrow \infty}\frac{\log
				l^{n_i}}{\displaystyle{\sum_{j=1}^i n_j}}=\limsup_{i \rightarrow
				\infty}\frac{n_i}{\displaystyle{\sum_{j=1}^i n_j}}\log l=\log l=\log
			\mathcal{N}(\mathcal{U}).
		\end{eqnarray*}
		Since $h_{\mathrm{top}}^{S, \mathbf{F}}(G, \mathcal{U}) \leqslant
		\log \mathcal{N}(\mathcal{U})$, we have $h_{\mathrm{top}}^{S, \mathbf{F}}(G,
		\mathcal{U})=\log \mathcal{N}(\mathcal{U})$. This finishes the proof of (1).
		
		\medskip
		
		Now we prove (2). Let $\mathcal{V}=\{V_1, V_2, \ldots, V_k\}$ be a non-trivial
		finite open cover. Take $x_j \in \mathrm{int}(X \setminus V_j)$ for
		every $1 \leqslant j \leqslant k$. Set $\{y_1, y_2, \ldots,
		y_l\}=\{x_1, x_2, \ldots, x_k\}$ with $y_s \neq y_t$ for $1
		\leqslant s<t \leqslant l$. Clearly, $l \geqslant 2$ and we can take
		pairwise disjoint closed neighborhood $W_i$ of $y_i$, $i=1, 2,
		\ldots, l$ such that the open cover $\mathcal{U}=\{X \setminus W_1,
		X \setminus W_2, \ldots, X \setminus W_l\}$ is coarser than
		$\mathcal{V}$. By a proof similar to that of inequalities of the same type in Theorem 4.2, we can obtain that
		$$(l-1)^{|F|}\mathcal{N }\left(\bigvee_{g \in F} g^{-1}\mathcal{U}\right) \geqslant
		\left|\left\{s \in \{1, 2, \ldots, l\}^F: \bigcap_{g \in
			F}g^{-1}W_{s(g)} \neq \emptyset\right\}\right|$$ for every finite
		subset $F$ of $G$. Moreover, by a proof similar to that of (1), we can find an increasing sequence $\{m_i\}_{i=1}^\infty$, an infinite subset
		$S=\bigcup_{i=1}^{\infty}C_{n_i} \subset H$ and $S_i=\bigcup_{j=1}^i
		C_{n_j}$ such that $C_{n_i}\subset F_{m_{i+1}}$, $C_{n_i}\cap F_{m_i}=\emptyset$, $|C_{n_i}|=(i+1)!-i!$
		and $$\bigcap_{g\in C_{n_i}}g^{-1}W_{s(g)}\neq\emptyset \mbox{\ for any\ } s\in\{1,2,\ldots,l\}^{C_{n_i}}.$$
		Thus,
		\begin{align*}
			h_{\mathrm{top}}^{S, \mathbf{F}}(G, \mathcal{U}) &= \limsup_{n
				\rightarrow \infty} \frac{1}{|F_n \cap S|} \log \mathcal{N}\left(\bigvee_{g
				\in F_n \cap S} g^{-1}\mathcal{U}\right)\\
			&\geqslant \limsup_{i \rightarrow \infty}\frac{1}{|F_{m_{i+1}} \cap
				S|} \log \mathcal{N}\left(\bigvee_{g \in F_{m_{i+1}} \cap S}
			g^{-1}\mathcal{U}\right)\\
			&= \limsup_{i \rightarrow \infty}\frac{1}{|S_i|} \log
			\mathcal{N}\left(\bigvee_{g \in S_i} g^{-1}\mathcal{U}\right)\\
			&\geqslant \limsup_{i \rightarrow
				\infty}\frac{1}{\displaystyle{\sum_{j=1}^i n_j}} \log
			\mathcal{N}\left(\bigvee_{g \in C_{n_i}} g^{-1}\mathcal{U}\right)\\
			&\geqslant \limsup_{i \rightarrow \infty}\frac{\displaystyle{\log
					\frac{1}{(l-1)^{n_i}}\left|\displaystyle{\left\{s \in \{1, 2,
						\ldots, l\}^{C_{n_i}}: \bigcap_{g \in C_{n_i}}g^{-1}W_{s(g)} \neq
						\emptyset\right\}}\right|}}{\displaystyle{\sum_{j=1}^i n_j}}\\
			&= \limsup_{i \rightarrow
				\infty}\frac{\displaystyle{\log\frac{1}{(l-1)^{n_i}}
					l^{n_i}}}{\displaystyle{\sum_{j=1}^i n_j}}=\lim_{i \rightarrow
				\infty}\frac{n_i}{\displaystyle{\sum_{j=1}^i
					n_j}}\log\frac{l}{l-1}=\log\frac{l}{l-1}>0.
		\end{align*}
		Hence, $h_{\mathrm{top}}^{S, \mathbf{F}}(G, \mathcal{V}) \geqslant
		h_{\mathrm{top}}^{S, \mathbf{F}}(G, \mathcal{U})>0$.
	\end{proof}

	\section{Sequence entropy pairs and null systems}
	
	In this section, we  use sequence entropy pairs to describe null systems and weakly mixing
	systems.
	
	\begin{de} \label{de:5-1}
		Let $(X,G)$ be a $G$-system.
		
		\begin{enumerate}
			\item We say that $\left(x_{1}, x_{2}\right) \in X \times X$ is a {\it sequence entropy pair} if $x_{1} \neq x_{2}$
			and if whenever $U_{i}$ are closed mutually disjoint neighborhoods
			of points $x_{i}, i=1,2$, there exists an infinite subset $S
			\subset G$ such that
			$h_{\mathrm{top}}^{S,\textbf{F}}\left(G,\left\{X\setminus U_{1},
			X\setminus U_{2}\right\}\right)>0$.

			\item We say that $(X, G)$ has a {\it uniform positive sequence entropy}
			(for short u.p.s.e.), if every point $\left(x_{1}, x_{2}\right) \in
			X \times X$, not in the diagonal $\Delta=\{(x, x): x \in X\}$, is a
			sequence entropy pair.

			\item We say that $(X, G)$ is a {\it null system}, if for any infinite subset $S \subset G$ such that $h_{\mathrm{top}}^{S,
				\mathbf{F}}(G)=0$.
		\end{enumerate}
	\end{de}
	
	\begin{rem} \label{rem:5-2}
		It is easy to see that $(X, G)$ has u.p.s.e. if and only if for any
		cover $\mathcal{U}=\{U, V\}$ of $X$ by two non-dense open sets, one
		has $h_{top}^{S,\textbf{F}}(G, \mathcal{U})>0$ for some infinite
		subset $S\subset G$.
	\end{rem}
	
	Denote by $SE(X,G)$ the set of all sequence entropy pairs.
	Following ideas in \cite{Bla93}, we provide a sufficient condition for the existence of sequence entropy pairs.
	\begin{lem}\label{lem:5-4}
		If there is a standard open cover $\mathcal{U}=\{U, V\}$ of $X$ with
		$h_{\mathrm{top}}^{S,\textbf{F}}(G, \mathcal{U})>0$ for some
		infinite sequence $S \subset G$, then there are points $x \in X\setminus U$
		and $x' \in X\setminus V$ such that $(x, x')$ is a sequence entropy pair.
	\end{lem}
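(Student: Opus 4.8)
The plan is to realise $(x,x')$ as the common limit of a shrinking sequence of pairs of closed sets, keeping the topological sequence entropy positive \emph{along the single infinite set $S$ furnished by the hypothesis}, so that no compactness argument about the limit is ever needed. For disjoint nonempty closed $A,B\subseteq X$ abbreviate $h^{S}(A,B):=h_{\mathrm{top}}^{S,\mathbf{F}}(G,\{X\setminus A,X\setminus B\})$. Since $\mathcal{U}=\{U,V\}$ covers $X$ and has positive sequence entropy along $S$, both $U$ and $V$ are proper (otherwise a one-set subcover forces entropy $0$); hence $A_{0}:=X\setminus U$ and $B_{0}:=X\setminus V$ are nonempty closed sets with $A_{0}\cap B_{0}=X\setminus(U\cup V)=\emptyset$, and $h^{S}(A_{0},B_{0})=h_{\mathrm{top}}^{S,\mathbf{F}}(G,\mathcal{U})>0$.

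First I would record two elementary properties of $h^{S}$. Monotonicity: if $A\subseteq A'$ and $B\subseteq B'$ then $h^{S}(A,B)\le h^{S}(A',B')$, because each member of $\{X\setminus A',X\setminus B'\}$ lies in the corresponding member of $\{X\setminus A,X\setminus B\}$, so applying $\bigvee_{g\in S\cap F_{n}}g^{-1}(\cdot)$ only increases the minimal cardinality of a subcover. Subadditivity: if $A=A'\cup A''$ with $A',A''$ closed, then $h^{S}(A,B)\le h^{S}(A',B)+h^{S}(A'',B)$, and symmetrically in the second coordinate; this uses $\mathcal{N}(\mathcal{C}\vee\mathcal{D})\le\mathcal{N}(\mathcal{C})\mathcal{N}(\mathcal{D})$ together with the fact that $\{X\setminus A',X\setminus B\}\vee\{X\setminus A'',X\setminus B\}$ is finer than $\{X\setminus A,X\setminus B\}$. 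Iterating gives $h^{S}(A,B)\le\sum_{j=1}^{m}h^{S}(C_{j},B)$ whenever $A=\bigcup_{j=1}^{m}C_{j}$.

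Next I would construct, inductively, nested nonempty closed sets $A_{0}\supseteq A_{1}\supseteq\cdots$ and $B_{0}\supseteq B_{1}\supseteq\cdots$, with $A_{k}\subseteq X\setminus U$ and $B_{k}\subseteq X\setminus V$, such that $h^{S}(A_{k},B_{k})>0$ for every $k$ and $\max\{\diam A_{k},\diam B_{k}\}\to 0$. At step $k$, I cover the coordinate of larger diameter, say $A_{k}$, by finitely many closed subsets $C_{1},\dots,C_{m}$ each of diameter at most half that of $A_{k}$ (possible as $A_{k}$ is compact metric); by the iterated subadditivity some $h^{S}(C_{j_{0}},B_{k})>0$, and I put $A_{k+1}=C_{j_{0}}$, $B_{k+1}=B_{k}$, alternating the two coordinates so that both diameters shrink to $0$. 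Then $\{x\}=\bigcap_{k}A_{k}$ and $\{x'\}=\bigcap_{k}B_{k}$ are single points, with $x\in X\setminus U$, $x'\in X\setminus V$, and $x\neq x'$ since the $A_{k}$ and $B_{k}$ sit in the disjoint sets $X\setminus U$ and $X\setminus V$. Given disjoint closed neighborhoods $U_{1}\ni x$ and $U_{2}\ni x'$, the shrinking diameters give $A_{k}\subseteq U_{1}$ and $B_{k}\subseteq U_{2}$ for all large $k$, so monotonicity yields $h_{\mathrm{top}}^{S,\mathbf{F}}(G,\{X\setminus U_{1},X\setminus U_{2}\})=h^{S}(U_{1},U_{2})\ge h^{S}(A_{k},B_{k})>0$; hence $(x,x')\in SE(X,G)$.

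The step I expect to be the crux is precisely the choice to keep a single fixed $S$. The naive route --- Zorn's lemma on the family of closed pairs with positive sequence entropy, passing to a minimal element --- breaks down, since positive sequence entropy need not persist under a decreasing intersection (a pair of singletons has zero sequence entropy, so no minimal element exists). The bisection above avoids forming any such intersection inside the family: subadditivity lets me split while retaining positivity along the \emph{same} $S$ at every finite stage, and the sequence entropy pair property is extracted from the neighborhoods $U_{1},U_{2}$, which are larger than the stage sets $A_{k},B_{k}$, rather than from the limit points. Making the subadditive splitting and the diameter reduction cooperate --- so that both coordinates collapse to points while $h^{S}$ stays positive --- is where the care is needed.
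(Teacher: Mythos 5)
Your proposal is correct and follows essentially the same route as the paper: both proofs run the Blanchard-style bisection, covering the closed complement by finitely many pieces of half the diameter, using subadditivity of $h_{\mathrm{top}}^{S,\mathbf{F}}$ under joins to keep positivity along the same fixed $S$, shrinking to the limit points $x,x'$, and then invoking monotonicity under coarsening to handle arbitrary disjoint closed neighborhoods. Your reformulation via the two-variable function $h^{S}(A,B)$ with its monotonicity and subadditivity is only a cosmetic repackaging of the paper's argument with coarser covers.
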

	\begin{proof}
		Firstly, we show that one can find a strictly coarser cover $\mathcal{U}_1=\{U_1,V_1\}$ with $h_{\mathrm{top}}^{S,\textbf{F}}(G, \mathcal{U})>0$,
		having the property that $\operatorname{diam}(X\setminus U_1)<1/2\operatorname{diam}(X\setminus U)$
		and $\operatorname{diam}(X\setminus V_1)<1/2\operatorname{diam}(X\setminus V)$,
		where $\operatorname{diam}(A):=\max\{d(x,y):x,y\in A\}$ for $A\subset X$.
		Note that $X\setminus U$ cannot be a singleton, because in this case,
		$h_{\mathrm{top}}^{S,\textbf{F}}(G, \mathcal{U})=0$: if $X\setminus U$ is a singleton,
		then $\cap_{g\in F_n\cap S}g^{-1}U$ is equal to $X$ minus at most $n$ points,
		and hence $\bigvee_{g \in F_n \cap S}g^{-1}\mathcal{U}$ has a subcover with cardinality at most $n+1$,
		which implies that $h_{\mathrm{top}}^{S,\textbf{F}}(G, \mathcal{U})=0$.
		Thus, there exist two distinct points $y,y'\in X\setminus U$. Fix $\epsilon_1$ such that $0<\epsilon_1<\frac{1}{2}d(y,y')$,
		and construct a finite cover of $X\setminus U$ by open balls with radius $\epsilon_1$ centered in $X\setminus U$;
		call is $\mathcal{V}=\{U_1',\ldots,U'_k\}$, where $k\ge 2$. Let $F_i'=\overline{U_i'}$
		and $F_i=F_i'\cap U$. $i=1,2,\ldots,k$. Consider the open cover $\bigvee_{i=1}^k\{X\setminus F_i,V\}$.
		Since $\bigcap_{i=1}^k(X\setminus F_i)\subset U$, it follows that $\mathcal{U}$ is coarser than $\bigvee_{i=1}^k\{X\setminus F_i,V\}$. Thus,
		\[0<h_{\mathrm{top}}^{S,\textbf{F}}(G, \mathcal{U})\le h_{\mathrm{top}}^{S,\textbf{F}}(G, \bigvee_{i=1}^k\{X\setminus F_i,V\})\le\sum_{k=1}^nh_{\mathrm{top}}^{S,\textbf{F}}(G, \{X\setminus F_i,V\}),\]
		which implies that there exists $j\in\{1,2,\ldots,k\}$ such that
		$h_{\mathrm{top}}^{S,\textbf{F}}(G, \{X\setminus F_j,V\})>0$.
		Denote $U_1=X\setminus F_j$. Then choosing a suitable $\epsilon'$ and doing the same for $V$,
		we obtain  $V_1$ such that  $\mathcal{U}_1=\{U_1,V_1\}$ is a strictly coarser cover than $\mathcal{U}$
		with $h_{\mathrm{top}}^{S,\textbf{F}}(G, \mathcal{U})>0$, having the property that
		$\operatorname{diam}(X\setminus U_1)<\frac{1}{2}\operatorname{diam}(X\setminus U)$ and $\operatorname{diam}(X\setminus V_1)<\frac{1}{2}\operatorname{diam}(X\setminus V)$.
		
		Repeating this process infinitely many times, we obtain two decreasing sequences of closed subsets
		$\{X\setminus U_i\}_{i=1}^\infty$ and $\{X\setminus V_i\}_{i=1}^\infty$ with
		$\operatorname{diam}(X\setminus U_{i+1})<\frac{1}{2}\operatorname{diam}(X\setminus U_{i})$
		and $\operatorname{diam}(X\setminus V_{i+1})<\frac{1}{2}\operatorname{diam}(X\setminus V_{i})$
		for each $i\in\mathbb{N}$ such that $h_{\mathrm{top}}^{S,\textbf{F}}(G, \mathcal{U}_i)>0$,
		where $\mathcal{U}_i=\{U_i,V_i\}$ for each $i\in\mathbb{N}$. Since $X$ is compact, we deduce that
		\begin{equation}\label{eq5}
			\bigcap_{i=1}^\infty (X\setminus U_i)=\{x\}\text{ and }\bigcap_{i=1}^\infty (X\setminus V_i)=\{x'\}.
		\end{equation}
		
		Now we prove that $(x,x')\in SE(X,G)$. Let $W,W'$ be two closed mutually disjoint neighborhoods of $x,x'$, respectively.
		Choose $\epsilon>0$ such that closed balls $\overline{B(x,\epsilon)}\subset W$ and $\overline{B(x',\epsilon)}\subset W'$.
		Then the open cover $\{X\setminus\overline{B(x,\epsilon)},X\setminus\overline{B(x',\epsilon)}\}$
		is coarser than $\{X\setminus W,X\setminus W'\}$. By \eqref{eq5}, there exists $i\in\mathbb{N}$ such that
		$X\setminus U_i\subset \overline{B(x,\epsilon)}$ and $X\setminus V_i\subset \overline{B(x',\epsilon)}$,
		which implies that the open cover $\mathcal{U}_i$ is coarser than
		$\{X\setminus\overline{B(x,\epsilon)},X\setminus\overline{B(x',\epsilon)}\}$.
		Thus, $\mathcal{U}_i$ is coarser than $\{X\setminus W,X\setminus W'\}$, which implies that
		\[0<h_{\mathrm{top}}^{S,\textbf{F}}(G, \mathcal{U}_i)\le h_{\mathrm{top}}^{S,\textbf{F}}(G, \{X\setminus W,X\setminus W'\}).\]
		Therefore, $(x,x')\in SE(X,G)$, proving this lemma.
	\end{proof}
	With the help of Lemma \ref{lem:5-4}, we provide a characterization of null systems via sequence entropy pairs.
	\begin{thm} \label{thm:5-3}
		Let $(X,G)$ be a $G$-system. Then $(X,G)$ is null if and only if
		$SE(X,G)=\emptyset$.
	\end{thm}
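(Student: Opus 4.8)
The plan is to prove the two implications separately, the forward one being immediate and the reverse one carrying the real content.

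For the easy direction I would argue from the definition of a sequence entropy pair by contraposition. Suppose $SE(X,G)\neq\emptyset$ and pick $(x_1,x_2)\in SE(X,G)$. Since $x_1\neq x_2$ and $X$ is metric, choose closed mutually disjoint neighborhoods $U_1,U_2$ of $x_1,x_2$; then $\{X\setminus U_1,X\setminus U_2\}$ is a genuine open cover of $X$, because $(X\setminus U_1)\cup(X\setminus U_2)=X\setminus(U_1\cap U_2)=X$. By Definition \ref{de:5-1}(1) there is an infinite $S\subset G$ with $h_{\mathrm{top}}^{S,\mathbf{F}}(G,\{X\setminus U_1,X\setminus U_2\})>0$, whence $h_{\mathrm{top}}^{S,\mathbf{F}}(G)>0$ and $(X,G)$ is not null. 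This yields ``$(X,G)$ null $\Rightarrow SE(X,G)=\emptyset$''.

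For the reverse direction I would again use the contrapositive: assuming $(X,G)$ is not null, I will produce a sequence entropy pair. Non-nullness gives an infinite $S\subset G$ and, by the definition of $h_{\mathrm{top}}^{S,\mathbf{F}}(G)$ as a supremum over finite open covers, a finite open cover $\mathcal{U}=\{U_1,\ldots,U_n\}$ with $h_{\mathrm{top}}^{S,\mathbf{F}}(G,\mathcal{U})>0$. The decisive step is to pass from this $n$-element cover to a \emph{standard} (two-element) cover of positive sequence entropy, so that Lemma \ref{lem:5-4} applies. To this end I would invoke the standard shrinking lemma for finite open covers of the compact, hence normal, metric space $X$ to obtain closed sets $F_i\subseteq U_i$ with $\bigcup_{i=1}^n F_i=X$, and set $\mathcal{V}_i=\{U_i,X\setminus F_i\}$, each a standard open cover (it covers $X$ since $F_i\subseteq U_i$). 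The key combinatorial observation is that $\bigvee_{i=1}^n\mathcal{V}_i$ refines $\mathcal{U}$: an atom of the join has the form $\bigcap_{i\in I}U_i\cap\bigcap_{i\notin I}(X\setminus F_i)$, and if $I\neq\emptyset$ it is contained in $U_m$ for any $m\in I$, while the atom with $I=\emptyset$ equals $X\setminus\bigcup_{i=1}^n F_i=\emptyset$. Using the monotonicity and subadditivity of topological sequence entropy (exactly the two properties already used inside the proof of Lemma \ref{lem:5-4}), I then obtain $0<h_{\mathrm{top}}^{S,\mathbf{F}}(G,\mathcal{U})\le h_{\mathrm{top}}^{S,\mathbf{F}}(G,\bigvee_{i=1}^n\mathcal{V}_i)\le\sum_{i=1}^n h_{\mathrm{top}}^{S,\mathbf{F}}(G,\mathcal{V}_i)$, so some $\mathcal{V}_{i_0}$ has positive sequence entropy along $S$. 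Applying Lemma \ref{lem:5-4} to the standard cover $\mathcal{V}_{i_0}$ produces a sequence entropy pair, giving $SE(X,G)\neq\emptyset$, as required.

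The main obstacle is precisely this reduction to a two-element cover: one must construct standard covers whose join still \emph{dominates} $\mathcal{U}$ in the refinement order, so that the entropy inequality runs in the useful direction, while keeping each $\mathcal{V}_i$ standard. The shrinking-lemma construction $\mathcal{V}_i=\{U_i,X\setminus F_i\}$ is what forces the otherwise problematic ``all-complement'' atom to collapse to the empty set, and this is the only point requiring genuine topological input. Once a single standard cover of positive sequence entropy is secured, Lemma \ref{lem:5-4} performs all remaining work (the iterated halving of diameters and the compactness argument yielding the pair), and I would note that the same $S$ serves throughout, so no diagonal extraction over covers is needed.
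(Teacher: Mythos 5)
Your proof is correct and follows essentially the same route as the paper: both directions reduce to Lemma \ref{lem:5-4} applied to a standard open cover of positive sequence entropy along some infinite $S$. The only difference is that the paper merely asserts that non-nullness yields such a standard cover, whereas you supply the shrinking-lemma reduction $0<h_{\mathrm{top}}^{S,\mathbf{F}}(G,\mathcal{U})\le h_{\mathrm{top}}^{S,\mathbf{F}}\bigl(G,\bigvee_{i=1}^n\mathcal{V}_i\bigr)\le\sum_{i=1}^n h_{\mathrm{top}}^{S,\mathbf{F}}(G,\mathcal{V}_i)$ explicitly --- a welcome detail, and consistent with the monotonicity and subadditivity already used inside the paper's proof of Lemma \ref{lem:5-4}.
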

	\begin{proof}
		It is easy to see that if $(X,G)$ is null then $SE(X,G)=\emptyset$.
		Conversely, if $(X,G)$ is not null then there exists a standard open cover $\mathcal{U}$ such that $h_{\mathrm{top}}^{S,\textbf{F}}(G, \mathcal{U})>0$, and hence by Lemma \ref{lem:5-4},
		there exists $(x,x')\in SE(X,G)$. This finishes the proof of Theorem \ref{thm:5-3}.
	\end{proof}
	
	Taking advantage of Theorem \ref{thm:4-2}, we immediately obtain the
	following description of weakly mixing systems via sequence entropy
	pairs.
	
	\begin{thm} \label{thm:5-5}
		Let $G$ be an Abelian group. Then the $G$-system $(X,G)$ is weakly
		mixing if and only if it is u.p.s.e.
	\end{thm}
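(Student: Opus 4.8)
The plan is to reduce the statement to the already-established equivalence between weak mixing and positivity of topological sequence entropy for standard covers (Theorem \ref{thm:4-2}), using the reformulation of u.p.s.e. recorded in Remark \ref{rem:5-2}. No new dynamical input is needed: the whole argument is a matter of matching two conditions term by term, so both directions of the biconditional are handled simultaneously.

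First I would unwind the definition of u.p.s.e. By Remark \ref{rem:5-2}, $(X,G)$ has u.p.s.e. if and only if for every open cover $\mathcal{U}=\{U,V\}$ of $X$ consisting of two non-dense open sets there exists an infinite subset $S\subset G$ with $h_{\mathrm{top}}^{S,\mathbf{F}}(G,\mathcal{U})>0$. The key observation is that an open cover $\{U,V\}$ by two non-dense sets is precisely a non-trivial standard open cover in the terminology of Section 4: it has exactly two members (so it is standard, $n=2$) and neither member is dense (so it is non-trivial). Thus the u.p.s.e. condition is literally the assertion that every non-trivial standard open cover has positive sequence entropy along some infinite $S$.

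Next I would invoke Theorem \ref{thm:4-2}, which applies since $G$ is assumed Abelian. Its condition (4) asserts exactly that for each standard open cover $\mathcal{U}$ there is an infinite $S\subset G$ with $h_{\mathrm{top}}^{S,\mathbf{F}}(G,\mathcal{U})>0$; here the content is non-vacuous only for non-trivial standard covers, since a cover containing a dense member always has $\mathcal{N}\bigl(\bigvee_{g\in S\cap F_n}g^{-1}\mathcal{U}\bigr)=1$ and hence zero sequence entropy along every $S$. Consequently condition (4) of Theorem \ref{thm:4-2} coincides with the u.p.s.e. condition extracted in the previous step. The equivalence (1)$\Leftrightarrow$(4) of Theorem \ref{thm:4-2} then yields that $(X,G)$ is weakly mixing if and only if it is u.p.s.e., which is the desired conclusion.

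The point to get right, rather than a genuine obstacle, is the bookkeeping identifying ``cover by two non-dense open sets'' with ``non-trivial standard open cover,'' and confirming that the quantifier structure (``for every such cover, some infinite $S$'') is identical in Remark \ref{rem:5-2} and in Theorem \ref{thm:4-2}(4). Once these are aligned there is no estimation or construction to carry out, and the theorem follows at once.
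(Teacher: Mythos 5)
Your proof is correct and takes essentially the same route as the paper, which likewise obtains Theorem \ref{thm:5-5} immediately from the equivalence (1)$\Leftrightarrow$(4) of Theorem \ref{thm:4-2} combined with the reformulation of u.p.s.e.\ in Remark \ref{rem:5-2}. One minor inaccuracy in a non-essential aside: a cover containing a dense but proper open member need not have $\mathcal{N}\bigl(\bigvee_{g\in S\cap F_n}g^{-1}\mathcal{U}\bigr)=1$ (dense does not mean equal to $X$); but since ``standard'' in Section 4 already presupposes non-triviality, that aside is not needed and the argument stands.
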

	
	\section{Example}
	
	In this section, we provide an example to show that there exists a $\mathbb{Z}^2$-system
	such that it has zero entropy, but there exists a sequence  such that it has positive sequence entropy both in measure-theoretic and topological sense.
	\begin{ex}
		Following ideas of Example 5.4 in \cite{LX2023}, let $Y=\{0,1\}$ and $\mu(\{0\})=\mu(\{1\})=1/2$. Let $X=Y^\mathbb{Z}$ and $\nu=\mu^\mathbb{Z}$.
		Define $T_1=Id_X:X\to X$ by $$T_1(\{x_n\}_{n\in\mathbb{Z}})=\{x_n\}_{n\in\mathbb{Z}}\text{ for any }\{x_n\}_{n\in\mathbb{Z}}\in X.$$
		Define $T_2:X\to X$ by $$T_2(\{x_n\}_{n\in\mathbb{Z}})=\{x_{n+1}\}_{n\in\mathbb{Z}}
		\text{ for any }\{x_n\}_{n\in\mathbb{Z}}\in X,$$
		that is, $T_2$ is the two-sided full shift. Define a $\mathbb{Z}^2$-action $T$ by $$T^{(m,n)}=T_1^mT_2^n\text{ for all }(m,n)\in\mathbb{Z}^2.$$
		Then $(X,T)$ is a $\mathbb{Z}^2$-system, and $\mu$ induces a $\mathbb{Z}^2$-MPS. Let $F_n=[0,n-1]\times [0,n-1]$ for each $n\in\mathbb{N}$.
		Then $\textbf{F}=\{F_n\}_{n=1}^\infty$ is a F\o lner sequence.
		
		\medskip
		
		Now, we prove $(X,T)$ has zero topological entropy. Indeed, for any finite open cover $\mathcal{U}$, we have that
		\[\mathcal{N}\left(\bigvee_{i,j=0}^{n-1}T^{-(i,j)}\mathcal{U}\right)\le \mathcal{N}\left(\bigvee_{i=0}^{n-1}T_1^{-i}\mathcal{U}\right)^n\text{ for each }n\in\mathbb{N}.\]
		Since $T_1=Id$ has zero topological entropy, it follows that
		\[h_{\mathrm{top}}(T,\mathcal{U})=\lim_{n\to \infty}\frac{1}{|F_n|}\log\mathcal{N}\left(\bigvee_{i,j\in F_n}T^{-(i,j)}\mathcal{U}\right)\le \lim_{n\to \infty}\frac{1}{n}\log\mathcal{N}\left(\bigvee_{i=0}^{n-1}T_1^{-i}\mathcal{U}\right)=0.\]
		Since $\mathcal{U}$ is arbitrary, we have that $h_{\mathrm{top}}(\mathbb{Z}^2)=\sup_{\mathcal{U}}h_{\mathrm{top}}(T,\mathcal{U})=0$. However, we take $S=\{(0,n)\}_{n=1}^{\infty}$. It is known that the topological entropy of the two-sided full shift is $\log 2$. Thus,
		\begin{equation*}
			\begin{split}
				h_{\mathrm{top}}^{S,\textbf{F}}(\mathbb{Z}^2)&=\sup_{\mathcal{U}}\limsup_{n\to \infty}\frac{1}{|F_n \cap S|}\log\mathcal{N}\left(\bigvee_{i,j\in F_n \cap S}T^{-(i,j)}\mathcal{U}\right)\\
				&=\sup_{\mathcal{U}}\lim_{n\to \infty}\frac{1}{n}\log\mathcal{N}\left(\bigvee_{i=0}^{n-1}T_2^{-i}\mathcal{U}\right)=\log{2}.
			\end{split}
		\end{equation*}
		
		By an argument similar to that of the topological entropy, we also can prove that
		$h_{\nu}(\mathbb{Z}^2)=0$, but
		$h_{\nu}^{S,\textbf{F}}(\mathbb{Z}^2)=\log2$.
	\end{ex}

	\
	
	\noindent {\bf Acknowledgments.} We thank Wen Huang, Song Shao and Xiangdong Ye
	for their valuable suggestions over the topic. We would like to thank the referee for many valuable
	and constructive comments and suggestions, especially providing a reasonable definition of mild mixing
	for group actions and the claim in Remark \ref{rem:review}, which help us to improve the paper.
	

\end{document}